\title{On integral forms for vertex algebras associated with affine Lie algebras
and lattices}
\author{Robert McRae}
\date{}
    \theoremstyle{definition}\newtheorem{rema}{Remark}[section]
    \theoremstyle{plain}\newtheorem{propo}[rema]{Proposition}
    \newtheorem{theo}[rema]{Theorem}
    \newtheorem{defi}[rema]{Definition}
    \newtheorem{lemma}[rema]{Lemma}
    \newtheorem{corol}[rema]{Corollary}
    \theoremstyle{definition}\newtheorem{exam}[rema]{Example}
\begin{document}
\bibliographystyle{alpha}
\maketitle

\newcommand{\nordcirc}{\mbox{\tiny ${\circ\atop\circ}$}}
\numberwithin{equation}{section}

\begin{abstract}
\noindent We revisit the construction of integral forms for vertex (operator)
algebras $V_L$ based on even lattices $L$ using generators instead of bases, and
we construct integral forms for $V_L$-modules. We construct integral forms for
vertex (operator) algebras based on highest-weight modules for affine Lie
algebras and we exhibit natural generating sets. For vertex operator algebras in
general, we give conditions showing when an integral form contains the standard
conformal vector generating the Virasoro algebra. Finally, we study integral
forms in contragredients of modules for vertex algebras.
\end{abstract}

\section{Introduction}

While vertex algebras are ordinarily assumed to be vector spaces over
$\mathbb{C}$, or 
over any field of characteristic zero, the axioms, in particular the Jacobi 
identity, make sense for any commutative ring, and so it is natural to consider 
vertex algebras over $\mathbb{Z}$. In particular, it is natural to look for 
$\mathbb{Z}$-forms of vertex algebras over $\mathbb{C}$, by analogy with the 
construction of Lie algebras over $\mathbb{Z}$ using Chevalley bases. Given 
an integral form of a vertex algebra, one can then construct algebras over 
fields of prime characteristic $p$ by reducing structure constants mod $p$ and 
then extending 
scalars. 

Integral forms for vertex algebras, and especially for vertex 
algebras constructed from even lattices, have been studied previously in 
\cite{B}, \cite{P}, and \cite{DG}, and have been used in the modular moonshine program of Borcherds and Ryba (\cite{R}, \cite{BR1},  \cite{BR2}, \cite{GL}). In this paper, we continue the study of 
integral forms for vertex algebras, revisiting some known results with new 
methods and proving new results as well. Our approach is based on finding
generators for integral forms of vertex algebras. One advantage of this new
method is that it allows the construction of an integral form containing desired
elements without knowledge of the full structure in advance. This is 
particularly useful in constructing integral forms in vertex algebras based on 
affine Lie algebras and in constructing integral forms containing the standard 
conformal vector $\omega$ generating the Virasoro algebra. Further,
defining an integral form $V_\mathbb{Z}$ for a vertex algebra $V$ to be the
vertex subalgebra over $\mathbb{Z}$ generated by certain elements essentially
reduces the problem of proving that $V_\mathbb{Z}$ is in fact an integral form
of $V$ to the problem of showing that it is an integral form of $V$ as a vector
space. In the case of vertex algebras based on lattices, this is easier than
proving that an integral form of $V$ as a vector space is also a vertex
subalgebra, which is the method of proof used in \cite{P}.

In Section 2, we specify the classes of vertex algebras and modules that we will
be studying and state some basic facts about vertex algebra integral forms. In 
Section 3, we construct integral forms for vertex algebras based
on affine Lie algebras. In particular, we study affine Lie algebras based on a
finite dimensional simple Lie algebra $\mathfrak{g}$ using the integral form of
the universal enveloping algebra of an affine Lie algebra constructed in 
\cite{G} 
(see also \cite{M} and \cite{P}). For these integral forms, we exhibit a natural
set
of generating elements. The recent paper \cite{GL} has constructed integral forms in level $1$ affine Lie algebra vertex algebras using integral forms in lattice vertex algebras, but our results here can be applied to affine Lie algebra vertex algebras and modules of arbitrary integral level.

In Section 4, we study integral forms for vertex algebras $V_L$ based on even
lattices $L$. In his research announcement \cite{B}, Borcherds defined an
integral form for such a vertex algebra and exhibited a $\mathbb{Z}$-basis 
for this form. It has been proved in \cite{P} and \cite{DG} that this structure
is in fact an integral form, essentially by showing that the vertex algebra 
product of any two members
of Borcherds' $\mathbb{Z}$-basis is a $\mathbb{Z}$-linear combination of basis
elements. In Section 4, we provide an alternate proof by defining the integral
form to be the
vertex subalgebra over $\mathbb{Z}$ generated by a natural generating set and
then proving that the resulting structure is an integral form of the vector
space $V_L$. We also show that our definition of the integral form is equivalent
to the definition in \cite{B}. Further, we construct integral forms in modules
for lattice vertex
algebras. This problem is slightly more subtle than the problem of constructing
an integral form in the algebra due to the nature of a central extension of the
dual lattice $L^\circ$ that is needed to construct modules for $V_L$.

 In Section 5, we consider when an integral form for a vertex 
operator algebra contains the standard conformal element $\omega$ generating the
Virasoro algebra. For a vertex algebra
based on a lattice $L$ we solve this problem by proving that the integral form
constructed in Section 4 contains $\omega$ if and only if $L$ is self-dual (the ``if'' direction was observed in \cite{BR1}). More
generally, we give conditions under which an integral form of a
vertex operator algebra can be extended to include a multiple of $\omega$.
Finally, in Section 6, we consider the construction of integral forms in
contragredient modules, applying this to the situation in which the vertex
algebra has a non-degenerate invariant bilinear form. Some of these results have
already appeared in \cite{DG} in the context of invariant forms on vertex
algebras, but we formulate them more generally here in the context of
contragredient modules.

\paragraph{Acknowledgements}
This paper is part of my thesis work at Rutgers University; I am very 
grateful to my advisor James Lepowsky for many helpful discussions and 
encouragement. This research was partially supported by NSF grant 
DMS-0701176.

\section{Vertex algebras and integral forms}

In this paper, by vertex algebra we will mean the structure as defined by 
Borcherds in \cite{B} (see also \cite{LL} for an equivalent definition and 
proof of the equivalence). We use the definition of \cite{FLM} for vertex 
operator algebra (see also \cite{LL}); that is, a vertex operator algebra is 
a vertex algebra with a conformal vector $\omega$ satisfying the usual 
properties. We can weaken the notion of vertex operator algebra by dropping 
the grading restriction conditions on the $L(0)$-weight spaces: that is, we 
allow infinitely many (non-zero) spaces of negative weight and we allow 
weight spaces to be infinite dimensional. Such a structure is called a 
conformal vertex algebra in \cite{HLZ}. In this paper, we will use the 
definition of module for a vertex (operator) algebra or conformal vertex 
algebra as in \cite{LL} and \cite{HLZ}. In particular, if a module has a 
weight grading, the weights are allowed to be in $\mathbb{C}$.

The specific conformal vertex algebras and their modules that we will study in
this paper have an additional grading by an abelian group, in addition to the
weight grading. Thus we need the notion of strongly $A$-graded conformal vertex
algebra, $A$ an abelian group, as formulated in \cite{HLZ}: a conformal vertex
algebra $V$ is strongly $A$-graded if there is an additional grading by $A$,
\begin{equation}
 V=\coprod_{\alpha\in A} V^\alpha ,
\end{equation}
compatible with the weight grading in the sense that for any $\alpha\in A$,
\begin{equation}
 V^\alpha=\coprod_{n\in\mathbb{Z}} V^\alpha_n,
\end{equation}
where $V^\alpha_n =V^\alpha\cap V_n$ for  $n\in\mathbb{Z}$. Further, we require
that $\mathbf{1}\in V^0_0$, $\omega\in V^0_2$, and if $v\in V^\alpha$, then
\begin{equation}\label{agradingops}
 Y(v,x)V^\beta\subseteq V^{\alpha+\beta}[[x,x^{-1}]].
\end{equation}
Finally, we want the grading restriction conditions: for any fixed $\alpha\in
A$, $V^\alpha_n=0$ for $n$ sufficiently negative and dim $V^\alpha_n<\infty$ for
any $n$. A module for a strongly $A$-graded conformal vertex algebra $V$ is a
module $W$ for $V$ as conformal vertex algebra with an additional $B$-grading,
$B$ an abelian group containing $A$. The $B$-grading of $W$ and the
$\mathbb{C}$-grading by conformal weight are compatible, the grading restriction
conditions hold, and $V^\alpha$ maps $W^\beta$ into $W^{\alpha+\beta}$, as
above.

The notion of vertex algebra over $\mathbb{Z}$ makes sense because all
numerical coefficients in the formal delta functions appearing in the
Jacobi identity \cite{FLM} are integers. For convenience, we will call vertex
algebras over $\mathbb{Z}$ \textit{vertex rings} in this paper, and we will call
$\mathbb{Z}$-subalgebras of vertex algebras \textit{vertex subrings}.  Recall
that an
integral form
of a vector space $V$ is a free abelian group $V_\mathbb{Z}$ such that
the canonical map 
\begin{equation}
 \mathbb{C}\otimes_{\mathbb{Z}}V_\mathbb{Z}\rightarrow V,
\end{equation}
 given by 
\begin{equation}  
 c\otimes_{\mathbb{Z}}v\mapsto cv
\end{equation}
 for $c\in\mathbb{C}$ and $v\in V$, is an isomorphism. That is, $V_\mathbb{Z}$
is the $\mathbb{Z}$-span of a basis of $V$.  Thus given a vertex
algebra $V$ over $\mathbb{C}$ (or any field of characteristic zero),
we can define an integral form of $V$ to be a vertex subring
$V_\mathbb{Z}\subseteq V$ which is  an integral form
 of the vector space $V$.   In other words,
$V_\mathbb{Z}$ is the $\mathbb{Z}$-span of a basis for $V$, it
contains the vacuum vector $\mathbf{1}$, and it is closed under vertex
algebra products. The notion of module for a vertex algebra over
$\mathbb{Z}$ also makes sense. Thus if $V$ is a vertex algebra over
$\mathbb{C}$ having an integral form $V_\mathbb{Z}$ and $W$ is a
$V$-module, we can define an integral form for $W$ to be a
$V_\mathbb{Z}$-submodule $W_\mathbb{Z}\subseteq W$ which is an integral
form of the vector space $W$. That is, $W_\mathbb{Z}$ is the
$\mathbb{Z}$-span of a basis for $W$, and it is preserved by vertex
operators from $V_\mathbb{Z}$.

If $V$ is also a vertex operator algebra or conformal vertex algebra,
and so has a conformal element $\omega$, we do not require an integral
form of $V$ to contain $\omega$. Such a requirement would disallow
many interesting integral forms. However, in this paper, we will
require an integral form $V_\mathbb{Z}$ of $V$ to be compatible with
the weight grading:
\begin{equation}\label{compatibility}
 V_\mathbb{Z}=\coprod_{n\in\mathbb{Z}} V_n\cap V_\mathbb{Z},
\end{equation}
where $V_n$ is the weight space with $L(0)$-eigenvalue $n$. Moreover, if $V$ is
a strongly $A$-graded conformal vertex algebra, we require $V_\mathbb{Z}$ to be
compatible with the $A\times\mathbb{C}$-gradation:
\begin{equation}\label{compatibility2}
 V_{\mathbb{Z}}=\coprod_{\alpha\in A,\, n\in\mathbb{Z}} V^\alpha_n\cap
V_\mathbb{Z}.
\end{equation}
We require the analogues of these compatibility conditions for modules for a
conformal vertex algebra or strongly $A$-graded conformal vertex algebra. One
could
assume different requirements on the relation of $\omega$ to
$V_\mathbb{Z}$; for instance, in \cite{DG} it is required that some
integer multiple of $\omega$ be in $V_\mathbb{Z}$.

We conclude this section with the following useful general results on vertex 
rings:
\begin{propo}\label{zgen}
 Suppose $V$ is a vertex algebra; for a subset $S$ of $V$, denote by 
$\left\langle S\right\rangle_\mathbb{Z}$ the vertex subring
generated by $S$. Then  $\left\langle S\right\rangle_\mathbb{Z}$ is the 
$\mathbb{Z}$-span of coefficients of products of the form
\begin{equation}\label{zspan}
 Y(u_1,x_1)\ldots Y(u_k,x_k)\mathbf{1}
\end{equation}
where $u_1,\ldots u_k\in S$. Moreover, if $W$ is a $V$-module, the 
$\left\langle S\right\rangle_\mathbb{Z}$-submodule generated by a subset 
$T$ of $W$ is the $\mathbb{Z}$-span of coefficients of products of the 
form
\begin{equation}
 Y(u_1,x_1)\ldots Y(u_k,x_k)w
\end{equation}
where $u_1,\ldots u_k\in S$ and $w\in T$.
\end{propo}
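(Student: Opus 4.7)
My plan is to prove the two inclusions separately. Let $U$ denote the $\mathbb{Z}$-span of coefficients of products of the form (\ref{zspan}) with $u_i\in S$; equivalently, $U$ is the $\mathbb{Z}$-span of iterated expressions $(u_1)_{n_1}(u_2)_{n_2}\cdots(u_k)_{n_k}\mathbf{1}$ for $k\geq 0$, $u_i\in S$, and $n_i\in\mathbb{Z}$ (the case $k=0$ giving $\mathbf{1}$).

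The inclusion $U\subseteq\langle S\rangle_{\mathbb{Z}}$ is the easy direction. Since $\langle S\rangle_{\mathbb{Z}}$ contains $\mathbf{1}$ and $S$ and is closed under every $n$-th product $(a,b)\mapsto a_n b$ for $a,b\in\langle S\rangle_{\mathbb{Z}}$, a downward induction (applying the innermost $(u_k)_{n_k}$ to $\mathbf{1}$ first and then building outward) places each iterated product above inside $\langle S\rangle_{\mathbb{Z}}$.

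The reverse inclusion amounts to showing that $U$ is itself a vertex subring of $V$ containing $S$. Taking $k=1$ and noting that $(u)_{-1}\mathbf{1}=u$ shows $S\subseteq U$, and $\mathbf{1}\in U$ is the case $k=0$. The essential point is closure under vertex operations: for all $u,v\in U$ and $m\in\mathbb{Z}$, $u_m v\in U$. By $\mathbb{Z}$-linearity, I reduce to the case where $u=(s_1)_{n_1} u'$ with $s_1\in S$ and $u'$ an iterated product of length $k-1$, and I induct on this length $k$. The base case $u=\mathbf{1}$ is trivial since $\mathbf{1}_m v=\delta_{m,-1}v$. For the inductive step, I invoke the Borcherds identity (a direct consequence of the Jacobi identity),
\begin{equation*}
((s_1)_{n_1} u')_m v=\sum_{i\geq 0}(-1)^i\binom{n_1}{i}\bigl[(s_1)_{n_1-i}\bigl(u'_{m+i}v\bigr)-(-1)^{n_1}u'_{n_1+m-i}\bigl((s_1)_i v\bigr)\bigr],
\end{equation*}
which is a finite sum by lower truncation and has coefficients in $\mathbb{Z}$. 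In the first type of term, $u'_{m+i}v\in U$ by the inductive hypothesis on $u'$, and prepending $(s_1)_{n_1-i}$ simply lengthens the iterated product by one, keeping us in $U$. In the second type, $(s_1)_i v\in U$ directly by prepending, and then $u'_{n_1+m-i}$ applied to this element of $U$ stays in $U$ by the same inductive hypothesis on $u'$. Hence $u_m v\in U$, completing the induction and showing $U$ is a vertex subring, so $\langle S\rangle_{\mathbb{Z}}\subseteq U$.

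The module statement is proved by an essentially identical argument: let $M$ denote the $\mathbb{Z}$-span of coefficients of $Y(u_1,x_1)\cdots Y(u_k,x_k)w$ with $u_i\in S$ and $w\in T$. The containment of $M$ in the submodule generated by $T$ is again immediate from the iterated-product description, while for the reverse inclusion one shows $M$ is stable under the action of $\langle S\rangle_{\mathbb{Z}}$; this reduces via the first part and linearity to checking that $a_m w\in M$ for $a$ an iterated product of generators and $w\in M$, which is exactly the Borcherds identity argument above with $w$ in place of $v$. The main obstacle is simply the bookkeeping in the inductive step: verifying that the Borcherds identity realizes $((s_1)_{n_1}u')_m v$ as a $\mathbb{Z}$-linear (finite) combination of expressions to which the inductive hypothesis truly applies at strictly smaller length $k-1$, which is what makes the induction well-founded.
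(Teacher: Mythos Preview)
Your proof is correct and follows essentially the same approach as the paper: the paper's proof simply cites Proposition~3.9.3 of \cite{LL} and notes that the numerical coefficients in the Jacobi identity are integers, and what you have written out is precisely that argument in detail, with the Borcherds iterate identity playing the role of the Jacobi identity consequence whose integer binomial coefficients make the induction go through over $\mathbb{Z}$.
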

\begin{proof}
 Use the proof of Proposition 3.9.3 in \cite{LL}, noting that all numerical 
coefficients in the Jacobi identity are integers. 
\end{proof}

\begin{rema}
 Proposition \ref{zgen} applies even if $\left\langle 
S\right\rangle_\mathbb{Z}$ 
and the 
$\left\langle S\right\rangle_\mathbb{Z}$-submodule generated by $T$ are not 
integral forms of their respective vector spaces.
\end{rema}

\begin{propo}\label{spanofvacuum}
 Suppose $V$ is a vertex algebra with integral form $V_\mathbb{Z}$. Then 
$V_\mathbb{Z}\cap\mathbb{C}\mathbf{1}=\mathbb{Z}\mathbf{1}$.
\end{propo}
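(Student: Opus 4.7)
The plan is to exploit two facts in combination: first, that $V_{\mathbb{Z}}$ simultaneously carries a $\mathbb{Z}$-basis which is automatically a $\mathbb{C}$-basis of $V$ (this is immediate from the definition of integral form, since $\mathbb{C}\otimes_{\mathbb{Z}}V_{\mathbb{Z}}\to V$ is an isomorphism); second, that the vacuum satisfies $Y(\mathbf{1},x)=\mathrm{id}_{V}$, so $Y(c\mathbf{1},x)=c\cdot\mathrm{id}_{V}$ for every $c\in\mathbb{C}$, and in particular $(c\mathbf{1})_{-1}v=cv$ for every $v\in V$.

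Concretely, I would fix a $\mathbb{Z}$-basis $\{v_i\}_{i\in I}$ of $V_{\mathbb{Z}}$ and suppose $c\mathbf{1}\in V_{\mathbb{Z}}$ for some $c\in\mathbb{C}$. Since $V_{\mathbb{Z}}$ is a vertex subring, the element $(c\mathbf{1})_{-1}v_i=cv_i$ belongs to $V_{\mathbb{Z}}$ for each $i$, and therefore admits an expansion $cv_i=\sum_j m_{ji}v_j$ with $m_{ji}\in\mathbb{Z}$ of finite support. On the other hand, $\{v_j\}_{j\in I}$ is a $\mathbb{C}$-basis of $V$, so the expansion of $cv_i$ in this basis is unique and is obviously given by the coefficient $c$ on $v_i$ and $0$ elsewhere. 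Comparing the two expansions forces $m_{ii}=c$, so $c\in\mathbb{Z}$. The reverse inclusion $\mathbb{Z}\mathbf{1}\subseteq V_{\mathbb{Z}}\cap\mathbb{C}\mathbf{1}$ is automatic because $\mathbf{1}\in V_{\mathbb{Z}}$ (vertex subrings contain the vacuum).

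I do not anticipate a serious obstacle here: once one recognizes that multiplication by $c$ acts as the component $(c\mathbf{1})_{-1}$ of a vertex operator on $V_{\mathbb{Z}}$, the integrality of $c$ falls out of a single uniqueness-of-basis-expansion argument. The only minor point to verify is that the vacuum axiom really gives $(c\mathbf{1})_{-1}=c\cdot\mathrm{id}_V$, and that ``vertex subring'' is interpreted to include closure under all components $u_n$ with $n\in\mathbb{Z}$, both of which are built into the definitions recalled in Section 2.
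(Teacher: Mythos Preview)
Your proof is correct, and it is somewhat different from the paper's argument. The paper also fixes a $\mathbb{Z}$-basis $\{v_i\}$ and also uses $Y(\mathbf{1},x)=1_V$, but it proceeds in two stages: first it expands $\mathbf{1}=\sum_i n_i v_i$ and observes that $c\mathbf{1}\in V_{\mathbb{Z}}$ forces $cn_i\in\mathbb{Z}$ for each $i$, hence $c\in\mathbb{Q}$; then, reducing to $0<c<1$, it iterates the product $(c\mathbf{1})_{-1}(c^{n-1}\mathbf{1})=c^n\mathbf{1}$ to place all powers $c^n\mathbf{1}$ in $V_{\mathbb{Z}}$, which is incompatible with $V_{\mathbb{Z}}$ being the $\mathbb{Z}$-span of a basis. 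Your route is more direct: instead of acting by $(c\mathbf{1})_{-1}$ on $\mathbf{1}$ repeatedly, you act once on a single basis vector $v_i$ and read off $c=m_{ii}\in\mathbb{Z}$ from uniqueness of the $\mathbb{C}$-basis expansion of $cv_i$. This bypasses both the preliminary rationality step and the limiting/contradiction argument, at no cost---the only ingredients are the vacuum axiom and the fact that a $\mathbb{Z}$-basis of an integral form is automatically a $\mathbb{C}$-basis of $V$, both of which you identify explicitly.
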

\begin{proof}
 Since $\mathbf{1}\in V_\mathbb{Z}$, it is clear that 
$\mathbb{Z}\mathbf{1}\subseteq V_\mathbb{Z}\cap\mathbb{C}\mathbf{1}$. On the 
other hand, since $V_\mathbb{Z}$ is the $\mathbb{Z}$-span of a basis $\lbrace 
v_i\rbrace $ for $V$, $\mathbf{1}=\sum_i n_i v_i$ where 
$n_i\in\mathbb{Z}$. If $c\mathbf{1}=\sum_i c 
n_i v_i\in V_\mathbb{Z}\cap\mathbb{C}\mathbf{1}$ for $c\in\mathbb{C}$, then $c
n_i\in\mathbb{Z}$ 
for each $i$, so $c\in\mathbb{Q}$. If $c\notin\mathbb{Z}$, by subtracting 
off an integer multiple of $\mathbf{1}$ from $c\mathbf{1}$, we may assume 
$0<c<1$. Since 
$V_\mathbb{Z}$ is closed under vertex operators and $Y(\mathbf{1},x)=1_V$, we 
see that $c^n\mathbf{1}\in V_\mathbb{Z}$ for all $n\geq 0$, contradicting the 
assumption that $V_\mathbb{Z}$ is an integral form of $V$. Thus 
$c\in\mathbb{Z}.$
\end{proof}

\section{Integral forms in vertex algebras based on affine Lie algebras}

We now recall the construction of vertex algebras based on affine Lie algebras
(see \cite{FZ} and \cite{LL} for more details). Suppose that $\mathfrak{g}$ is a
finite dimensional Lie algebra, with symmetric invariant form $\langle\cdot
,\cdot\rangle$. Then we can form the affine Lie algebra
\begin{equation}
 \widehat{\mathfrak{g}}=\mathfrak{g}\otimes\mathbb{C}[t,t^{-1}]\oplus\mathbb{C}
\mathbf{k}
\end{equation}
where $\mathbf{k}$ is central and all other brackets are determined by
\begin{equation}\label{affcomm}
 [a\otimes t^m,b\otimes t^n]=[a,b]\otimes t^{m+n}+m\langle
a,b\rangle\delta_{m+n,0}\mathbf{k},
\end{equation}
where $a,b\in\mathfrak{g}$ and $m,n\in\mathbb{Z}$. The affine Lie algebra has 
the decomposition
\begin{equation}
 \widehat{\mathfrak{g}}=\widehat{\mathfrak{g}}_+\oplus\widehat{\mathfrak{g}}
_0\oplus\widehat{\mathfrak{g}}_-
\end{equation}
where 
\begin{equation}
 \widehat{\mathfrak{g}}_\pm=\coprod_{n\in\pm\mathbb{Z}_+}\mathfrak{g}\otimes
t^n,\;\;\;\widehat{\mathfrak{g}}_0=\mathfrak{g}\otimes
t^0\oplus\mathbb{C}\mathbf{k}.
\end{equation}
If $U$ is a finite dimensional $\mathfrak{g}$-module, then $U$ becomes a
$\widehat{\mathfrak{g}}_+\oplus\widehat{\mathfrak{g}}_0$-module on which
$\widehat{\mathfrak{g}}_+$ acts trivially, $\mathfrak{g}\otimes t^0$ acts as
$\mathfrak{g}$, and $\mathbf{k}$ acts as some scalar $\ell\in\mathbb{C}$. Then
we can form the generalized Verma module
\begin{equation}
 V_{\widehat{\mathfrak{g}}}(\ell,U)=U(\widehat{\mathfrak{g}})\otimes_{U(\widehat
{\mathfrak{g}}_+\oplus\widehat{\mathfrak{g}}_0)}U,
\end{equation}
which is a $\widehat{\mathfrak{g}}$-module of level $\ell$; it has a unique
maximal submodule and thus a unique irreducible quotient,
$L_{\widehat{\mathfrak{g}}}(\ell,U)$. We use
$V_{\widehat{\mathfrak{g}}}(\ell,0)$ to denote
$V_{\widehat{\mathfrak{g}}}(\ell,U)$ where $U$ is the trivial 
$\mathfrak{g}$-module, and
$L_{\widehat{\mathfrak{g}}}(\ell,0)$ to denote its irreducible quotient. If we
write the trivial $\mathfrak{g}$-module with $\mathbf{k}$ acting as $\ell$ as 
$\mathbb{C}
1_\ell$, and then write $\mathbf{1}=1\otimes 1_\ell\in
V_{\widehat{\mathfrak{g}}}(\ell,0)$, then we can write
\begin{equation}
 V_{\widehat{\mathfrak{g}}}(\ell,0)=U(\widehat{\mathfrak{g}}_-)\mathbf{1},
\end{equation}
so $V_{\widehat{\mathfrak{g}}}(\ell,0)$ is spanned by elements of the form
\begin{equation}\label{liealgspan}
 a_1(-n_1)\ldots a_k(-n_k)\mathbf{1}
\end{equation}
where $a_i\in\mathfrak{g}$, $n_i\in\mathbb{Z}_+$, and we use the notation $a(n)$
to denote the action of $a\otimes t^n$ on $\widehat{\mathfrak{g}}$-modules.

The generalized Verma module $V_{\widehat{\mathfrak{g}}}(\ell,0)$ is a vertex
algebra, as are all its quotients, including
$L_{\widehat{\mathfrak{g}}}(\ell,0)$, which is a simple vertex algebra. The
vertex algebra structure is determined by
\begin{equation}\label{liealgops}
 Y(a(-1)\mathbf{1}, x)=\sum_{n\in\mathbb{Z}} a(n) x^{-n-1}.
\end{equation}
(The elements $a(-1)\mathbf{1}$ generate $V_{\widehat{\mathfrak{g}}}(\ell,0)$,
and thus also its quotients, as vertex algebras.) As long as $\ell\neq -h$,
where $h$ is the dual Coxeter number of $\mathfrak{g}$,
$V_{\widehat{\mathfrak{g}}}(\ell,0)$ is also a vertex operator algebra, with
conformal vector given by
\begin{equation}
 \omega=\frac{1}{2(\ell+h)}\sum_{i=1}^{\mathrm{dim}\,\mathfrak{g}}
u_i(-1)u_i'(-1)\mathbf{1},
\end{equation}
where $\left\lbrace u_i\right\rbrace $ is any basis of $\mathfrak{g}$ and
$\left\lbrace u_i'\right\rbrace $ is the corresponding dual basis with respect
to the form $\langle\cdot ,\cdot\rangle$. The generalized Verma modules 
$V_{\widehat{\mathfrak{g}}}(\ell,U)$ for finite-dimensional 
$\mathfrak{g}$-modules $U$ are $V_{\widehat{\mathfrak{g}}}(\ell,0)$-modules, as 
well as all quotients of $V_{\widehat{\mathfrak{g}}}(\ell, U)$;  the irreducible 
$V_{\widehat{\mathfrak{g}}}(\ell,0)$-modules  consist precisely of the
$\widehat{\mathfrak{g}}$-modules $L_{\widehat{\mathfrak{g}}}(\ell,U)$ where $U$
is a finite-dimensional irreducible $\mathfrak{g}$-module.

In order to construct integral forms in these vertex algebras and modules, we
will assume that $\mathfrak{g}$ has an integral form. That is, we assume that
$\mathfrak{g}$ has a basis whose $\mathbb{Z}$-span $\mathfrak{g}_\mathbb{Z}$ is
closed under the bracket and such that the form $\langle\cdot,\cdot\rangle$ is
integer-valued on $\mathfrak{g}_\mathbb{Z}$. Then $\widehat{\mathfrak{g}}$ also
has the integral form
\begin{equation}
 \widehat{\mathfrak{g}}_\mathbb{Z}=\mathfrak{g}_\mathbb{Z}\otimes_{\mathbb{Z}}
\mathbb{Z}[t,t^{-1}]\oplus\mathbb{Z}\mathbf{k}.
\end{equation}
For example, if $\mathfrak{h}$ is a finite-dimensional abelian Lie algebra with
a symmetric non-degenerate form, any full-rank integral lattice
$L\subseteq\mathfrak{h}$ is an integral form of $\mathfrak{h}$. If 
$\mathfrak{g}$
is a finite-dimensional simple Lie algebra, we can take for
$\mathfrak{g}_\mathbb{Z}$ the $\mathbb{Z}$-span of a Chevalley basis of
$\mathfrak{g}$. (There do exist symmetric invariant non-degenerate
integer-valued bilinear forms on such a $\mathfrak{g}_\mathbb{Z}$ because the
Killing form $\kappa(a,b)=\mathrm{Tr}\,(\mathrm{ad}\,a\,\mathrm{ad}\,b)$ is
integral on $\mathfrak{g}_\mathbb{Z}$.)

Now suppose that $U_{\mathbb{Z}}(\widehat{\mathfrak{g}})$ is an integral form 
of 
the universal enveloping algebra $U(\widehat{\mathfrak{g}})$ such that
\begin{equation}\label{PBWdecomp}
 U_{\mathbb{Z}}(\widehat{\mathfrak{g}})= 
U_{\mathbb{Z}}(\widehat{\mathfrak{g}}_-) 
U_{\mathbb{Z}}(\widehat{\mathfrak{g}}_0) 
U_{\mathbb{Z}}(\widehat{\mathfrak{g}}_+),
\end{equation}
where $ U_{\mathbb{Z}}(\widehat{\mathfrak{g}}_\pm)$ and $ 
U_{\mathbb{Z}}(\widehat{\mathfrak{g}}_0)$  are integral forms of  
$U(\widehat{\mathfrak{g}}_\pm)$ and $U(\widehat{\mathfrak{g}}_0)$, respectively. 
We also assume that $U_{\mathbb{Z}}(\widehat{\mathfrak{g}}_\pm)$ are graded in 
the sense that
\begin{equation}\label{universalgrading}
 U_{\mathbb{Z}}(\widehat{\mathfrak{g}}_\pm)=\coprod_{n\in\mathbb{Z}} 
U_{\mathbb{Z}}(\widehat{\mathfrak{g}}_\pm)\cap U(\widehat{\mathfrak{g}}_\pm)_n
\end{equation}
where  $U(\widehat{\mathfrak{g}}_\pm)_n$ is the space spanned by monomials of 
the form
\begin{equation}
 (g_1\otimes t^{n_1})\cdots (g_k\otimes t^{n_k})
\end{equation}
where  $g_1,\ldots,g_k\in\mathfrak{g}$ and $n=n_1+\ldots +n_k$.
We want to use such a 
$U_{\mathbb{Z}}(\widehat{\mathfrak{g}})$ to obtain integral forms in 
generalized 
Verma modules $V_{\widehat{\mathfrak{g}}}(\ell,U)$ and their irreducible 
quotients when $\ell\in\mathbb{Z}$.
\begin{rema}\label{PBWintform}
 Perhaps the simplest way to obtain such a $ 
U_{\mathbb{Z}}(\widehat{\mathfrak{g}})$ is to take a $\mathbb{Z}$-basis 
$\lbrace 
u_i\rbrace$ of $\mathfrak{g}_{\mathbb{Z}}$ and extend it to a 
$\mathbb{Z}$-basis 
$\lbrace u_i\otimes t^n,\mathbf{k}\rbrace$ of 
$\widehat{\mathfrak{g}}_\mathbb{Z}$. Then $ 
U_{\mathbb{Z}}(\widehat{\mathfrak{g}})$ may be defined as the $\mathbb{Z}$-span 
of ordered monomials in these basis elements, ordered in such a way that the 
decomposition (\ref{PBWdecomp}) holds. This is certainly an integral form of 
$U(\widehat{\mathfrak{g}})$ as a vector space, and in fact it is an integral 
form of $ U(\widehat{\mathfrak{g}})$ as an associative algebra since 
it includes $1$ and since a product of two ordered monomials is an integral 
combination of ordered monomials: this is because in any unordered monomial, 
the 
order of any two basis elements of $\widehat{\mathfrak{g}}_\mathbb{Z}$ can be 
switched at the cost of a commutator term which is an integral linear 
combination of monomials of lower degree.
\end{rema}

By an integral form $U_\mathbb{Z}$ of a $\widehat{\mathfrak{g}}_0$-module $U$, 
we will mean an integral form of $U$ as a vector space which is invariant under 
$U_\mathbb{Z}(\widehat{\mathfrak{g}}_0)$, where 
$U_\mathbb{Z}(\widehat{\mathfrak{g}}_0)$ is as in (\ref{PBWdecomp}). For 
example, if $U_\mathbb{Z}(\widehat{\mathfrak{g}})$ is constructed as in Remark 
\ref{PBWintform} and $U$ as a vector space has an integral form $U_\mathbb{Z}$ 
which is a $\mathfrak{g}_\mathbb{Z}$-module, and $\mathbf{k}$ acts on $U$ as an 
integer, then $U_\mathbb{Z}$ is an integral form of $U$.

\begin{theo}\label{intformvect}
 Suppose $U_\mathbb{Z}$ is an integral form of a finite-dimensional 
$\widehat{\mathfrak{g}}_0$-module $U$ on which $\mathbf{k}$ acts as $\ell$, and 
suppose $W$ is $V_{\widehat{\mathfrak{g}}}(\ell, U)$ or its irreducible quotient 
$L_{\widehat{\mathfrak{g}}}(\ell,U)$. Then $W_\mathbb{Z} 
=U_\mathbb{Z}(\widehat{\mathfrak{g}})U_\mathbb{Z}$ is an integral form of $W$ as 
a vector space, and $W_{\mathbb{Z}}$ is compatible with the conformal weight 
grading of $W$. Moreover, $W_\mathbb{Z}$ is invariant under 
$U_\mathbb{Z}(\widehat{\mathfrak{g}})$.  
\end{theo}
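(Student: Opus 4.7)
The plan is to treat the two cases of $W$ in sequence, with the generalized Verma case providing the scaffolding.

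For $W = V_{\widehat{\mathfrak{g}}}(\ell, U)$, I would first invoke the factorization (\ref{PBWdecomp}) to write
\[
W_\mathbb{Z} = U_\mathbb{Z}(\widehat{\mathfrak{g}}_-)\,U_\mathbb{Z}(\widehat{\mathfrak{g}}_0)\,U_\mathbb{Z}(\widehat{\mathfrak{g}}_+)\,U_\mathbb{Z}.
\]
Since $\widehat{\mathfrak{g}}_+$ annihilates $U$ and the grading (\ref{universalgrading}) forces $U_\mathbb{Z}(\widehat{\mathfrak{g}}_+)\cap U(\widehat{\mathfrak{g}}_+)_0 = \mathbb{Z}\cdot 1$, we have $U_\mathbb{Z}(\widehat{\mathfrak{g}}_+) U_\mathbb{Z} = U_\mathbb{Z}$. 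Combined with $U_\mathbb{Z}(\widehat{\mathfrak{g}}_0)U_\mathbb{Z} = U_\mathbb{Z}$ (by the assumed invariance), this collapses to $W_\mathbb{Z} = U_\mathbb{Z}(\widehat{\mathfrak{g}}_-)U_\mathbb{Z}$. Choosing $\mathbb{Z}$-bases $\{e_i\}$ of $U_\mathbb{Z}(\widehat{\mathfrak{g}}_-)$ (also a $\mathbb{C}$-basis of $U(\widehat{\mathfrak{g}}_-)$) and $\{u_j\}$ of $U_\mathbb{Z}$, the PBW theorem gives that $\{e_i u_j\}$ is a $\mathbb{C}$-basis of $W \cong U(\widehat{\mathfrak{g}}_-)\otimes U$. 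Since $W_\mathbb{Z}$ is visibly the $\mathbb{Z}$-span of these products, it is the $\mathbb{Z}$-span of a $\mathbb{C}$-basis of $W$, hence an integral form. Refining the $e_i$ to be homogeneous under (\ref{universalgrading}) yields compatibility with the weight grading, and $U_\mathbb{Z}(\widehat{\mathfrak{g}})$-invariance is immediate from closure.

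For $W = L_{\widehat{\mathfrak{g}}}(\ell, U)$, set $V = V_{\widehat{\mathfrak{g}}}(\ell, U)$ with maximal submodule $J$ and quotient map $\pi: V \to L$. Irreducibility of $U$ as a $\mathfrak{g}$-module forces $J \cap U = 0$ (otherwise $J$ would contain $U$ and thus all of $V$), so $U_\mathbb{Z}$ embeds in $L$ and $L_\mathbb{Z} = U_\mathbb{Z}(\widehat{\mathfrak{g}})U_\mathbb{Z} = \pi(V_\mathbb{Z})$. Both the weight-grading compatibility and the $U_\mathbb{Z}(\widehat{\mathfrak{g}})$-invariance pass through $\pi$ using the first case. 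What remains—and constitutes the main obstacle—is the integral-form property for $L_\mathbb{Z}$.

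In each weight space $L_n$, the group $L_\mathbb{Z}\cap L_n = \pi(V_\mathbb{Z}\cap V_n)$ is finitely generated, torsion-free (as a subgroup of a $\mathbb{C}$-vector space), and $\mathbb{C}$-spans $L_n$. The issue is to show that its $\mathbb{Z}$-rank equals $\dim_{\mathbb{C}} L_n$, equivalently (by rank-nullity for the surjection $V_\mathbb{Z}\cap V_n \twoheadrightarrow L_\mathbb{Z}\cap L_n$) that $J \cap V_\mathbb{Z}$ $\mathbb{C}$-spans $J$ in each weight. The natural strategy is to observe that $\tilde J := \mathbb{C}\cdot(J \cap V_\mathbb{Z})$ is automatically a $\widehat{\mathfrak{g}}$-submodule of $V$ contained in $J$, since $J \cap V_\mathbb{Z}$ is $U_\mathbb{Z}(\widehat{\mathfrak{g}})$-invariant and $U_\mathbb{Z}(\widehat{\mathfrak{g}})$ $\mathbb{C}$-spans $U(\widehat{\mathfrak{g}})$. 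One then wants $\tilde J = J$. I expect this to follow by exhibiting integral generators for $J$ as a $\widehat{\mathfrak{g}}$-module: weight-homogeneous singular vectors in $V_\mathbb{Z}$ whose $U_\mathbb{Z}(\widehat{\mathfrak{g}})$-orbits already lie in $J\cap V_\mathbb{Z}$ and together $\mathbb{C}$-span $J$. In the simple-$\mathfrak{g}$ setting that drives Section~3, the integral form of $U(\widehat{\mathfrak{g}})$ furnished by Garland is designed precisely so that standard singular vectors (e.g., divided-power expressions in root vectors applied to the top) lie in $V_\mathbb{Z}$; verifying that such integral singular vectors exist in sufficient quantity to generate $J$ is the technical heart of the argument.
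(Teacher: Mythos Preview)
Your treatment of the generalized Verma case is correct and matches the paper's argument. The gap is in the irreducible-quotient case.

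You correctly reduce the problem to showing that $J\cap V_\mathbb{Z}$ spans $J$ over $\mathbb{C}$ (equivalently, that each $J_n$ is ``defined over $\mathbb{Z}$'' relative to the lattice $V_\mathbb{Z}\cap V_n$). But your proposed mechanism---finding integral singular vectors whose $U_\mathbb{Z}(\widehat{\mathfrak{g}})$-orbits generate $J$---is not actually carried out, and there is no reason it should succeed at the level of generality of the theorem. The statement covers an \emph{arbitrary} integral form $U_\mathbb{Z}(\widehat{\mathfrak{g}})$ satisfying (\ref{PBWdecomp}) and (\ref{universalgrading}), not just Garland's divided-power form; and even in Garland's setting, asserting that the maximal submodule $J$ is generated by singular vectors lying in $V_\mathbb{Z}$ is a substantial structural claim about $J$ that you would need to prove, not assume. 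Your own phrasing (``I expect this to follow'', ``the technical heart of the argument'') signals that this step is a hope rather than an argument.

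The paper avoids this entirely by working directly inside $L=L_{\widehat{\mathfrak{g}}}(\ell,U)$ and never analyzing $J$. The method is a Steinberg--Garland descent: one shows that any minimal subset of $L_\mathbb{Z}$ that is $\mathbb{Z}$-independent but $\mathbb{C}$-dependent can be shortened, a contradiction. The key input is irreducibility of $L$: for any nonzero $w\in L_\mathbb{Z}$ there exists $y\in U_\mathbb{Z}(\widehat{\mathfrak{g}}_+)$ with $(y\cdot w)_U\neq 0$ in $U_\mathbb{Z}$ (otherwise $w$ would generate a proper submodule). Projecting onto a fixed coordinate of $U_\mathbb{Z}$ then produces integers $m_i$ with $m_1\neq 0$, and replacing $w_i$ by $m_1 w_i - m_i w_1$ drops the size of the dependent set by one. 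This uses nothing about the shape of $J$ and works for any $U_\mathbb{Z}(\widehat{\mathfrak{g}})$ with the stated properties.
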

\begin{proof}
 By definition, $W_\mathbb{Z}$ is invariant under 
$U_\mathbb{Z}(\widehat{\mathfrak{g}})$. Also, since 
$\widehat{\mathfrak{g}}_+\cdot U =0$ and since 
$U_\mathbb{Z}(\widehat{\mathfrak{g}}_0) U_\mathbb{Z}\subseteq U_\mathbb{Z}$,
 \begin{equation}
U_\mathbb{Z}(\widehat{\mathfrak{g}})U_\mathbb{Z}=U_{\mathbb{Z}}(\widehat{
\mathfrak{g}}_-) 
U_{\mathbb{Z}}(\widehat{\mathfrak{g}}_0) 
U_{\mathbb{Z}}(\widehat{\mathfrak{g}}_+)U_\mathbb{Z}=U_{\mathbb{Z}}(\widehat{
\mathfrak{g}}_-)U_\mathbb{Z}.
 \end{equation}
Since we assume $U_\mathbb{Z}(\widehat{\mathfrak{g}}_-)$ is graded as in 
(\ref{universalgrading}),  $W_\mathbb{Z}$ is graded by conformal weight and the 
intersection of $W_\mathbb{Z}$ with each weight space is spanned by finitely 
many vectors. In fact, an upper bound on the number of vectors required to span 
the weight space of weight $n$ higher than the lowest weight space is 
$\mathrm{dim}\,U(\widehat{\mathfrak{g}}_-)_n\cdot\mathrm{dim}\,U<\infty$. 
Moreover, since $W=U(\widehat{\mathfrak{g}}_-)U$, it follows that $W_\mathbb{Z}$ 
spans $W$ as a vector space. To prove the theorem, that is, to prove that $W$ is 
linearly isomorphic to $\mathbb{C}\otimes_{\mathbb{Z}}W_\mathbb{Z}$,
we need to show that if a set of vectors in $W_\mathbb{Z}$ is linearly 
independent over $\mathbb{Z}$, then they are also linearly independent over 
$\mathbb{C}$.

In the case that $W=V_{\widehat{\mathfrak{g}}}(\ell,U)$, then $W_\mathbb{Z}$ is 
isomorphic as a $\mathbb{Z}$-module to 
$U_\mathbb{Z}(\widehat{\mathfrak{g}}_-)\otimes_{\mathbb{Z}} U_\mathbb{Z}$, which 
is a free graded $\mathbb{Z}$-module whose weight spaces have rank equal to the 
dimension of the weight spaces of $W\cong 
U(\widehat{\mathfrak{g}}_-)\otimes_\mathbb{C} U$ since 
$U_\mathbb{Z}(\widehat{\mathfrak{g}}_-)$ is a graded integral form of  
$U(\widehat{\mathfrak{g}}_-).$ This proves the theorem in this case.

In the case that $W=L_{\widehat{\mathfrak{g}}}(\ell,U)$, we use a slight 
generalization of the proof of Theorem 11.3 in \cite{G}, which is an analogue of 
Lemma 12 in \cite{S}; see also Theorem 27.1 in \cite{H}. We first observe that 
$W_\mathbb{Z}\cap U=U_\mathbb{Z}$, and we use $\lbrace u_i\rbrace $ to denote a 
$\mathbb{Z}$-base of $U_\mathbb{Z}$. Next, we observe that if $w\in 
W_\mathbb{Z}$ is non-zero, there is some $y\in 
U_{\mathbb{Z}}(\widehat{\mathfrak{g}}_+)$ such that the component of $y\cdot w$ 
in $U$ is non-zero; this is because otherwise $w$ would generate a proper 
$\widehat{\mathfrak{g}}$-submodule in $L_{\widehat{\mathfrak{g}}}(\ell, U)$, 
which is impossible. Note that the component of $y\cdot w$ in $U$ is in 
$U_\mathbb{Z}$ since $W_\mathbb{Z}$ is graded and invariant under 
$U_\mathbb{Z}(\widehat{\mathfrak{g}})$.

Now we prove that $W_\mathbb{Z}$ is an integral form of $W$ by contradiction. 
Suppose that $\lbrace w_i\rbrace_{i=1}^k$ is a minimal set contained in 
$W_\mathbb{Z}$ that is linearly independent over $\mathbb{Z}$ but not over 
$\mathbb{C}$; note that for this to happen, $k\geq 2$. Then
\begin{equation}
 \sum_{i=1}^k c_i w_i=0
\end{equation}
for $c_i\in\mathbb{C}^\times$. Take $y\in U(\widehat{\mathfrak{g}}_+)$ such that 
the component $(y\cdot w_1)_U$ of $y\cdot w_1$ in $U_\mathbb{Z}$ is non-zero. In 
particular, for some basis element $u_j$ of $U_\mathbb{Z}$, there is a non-zero 
integer $m_1$ such that the projection of $(y\cdot w_1)_U$ to $\mathbb{Z}u_j$ 
(with respect to the basis $\lbrace u_i\rbrace$) is $m_1 u_j$. Also, for $2\leq 
i\leq k$, the projection of $(y\cdot w_i)_U$ to $\mathbb{Z} u_j$ is $m_i u_j$ 
for some integer $m_i$. Thus, since
\begin{equation}
 0=y\cdot\sum_{i=1}^k c_i w_i=\sum_{i=1}^k c_i(y\cdot w_i),
\end{equation}
we obtain
\begin{equation}
 \sum_{i=1}^k c_i m_i=0
\end{equation}
 because $\lbrace u_i\rbrace$ forms a basis for $U$ over $\mathbb{C}$ as well as 
a basis for $U_\mathbb{Z}$ over $\mathbb{Z}$. Thus
\begin{equation}
 0=m_1\left(\sum_{i=1}^k c_i w_i\right)-\left(\sum_{i=1}^k c_i m_i\right) 
w_1=\sum_{i=1}^k (m_1 c_i w_i-m_i c_i w_1)=\sum_{i=2}^k c_i (m_1 w_i-m_i w_1)
\end{equation}
Since $m_1\neq 0$, the vectors $m_1 w_i-m_i w_1$ for $2\leq i\leq k$ are thus in 
$W_\mathbb{Z}$, linearly independent over $\mathbb{Z}$, and linearly dependent 
over $\mathbb{C}$, which contradicts the minimality of $\lbrace 
w_i\rbrace_{i=1}^k$.
\end{proof}

We next prove a general result on vertex algebraic integral forms that applies 
to any finite-dimensional Lie algebra $\mathfrak{g}$ having an integral form.

\begin{propo}\label{basicaffint}
 Suppose $\mathfrak{g}_\mathbb{Z}$ is an integral form of $\mathfrak{g}$, 
$U_\mathbb{Z}(\widehat{\mathfrak{g}})$ is constructed as in Remark 
\ref{PBWintform}, and $\ell\in\mathbb{Z}$. Then the integral form 
$V_{\widehat{\mathfrak{g}}}(\ell,0)_\mathbb{Z}$ given by Theorem 
\ref{intformvect} is the vertex subring generated by the vectors 
$a(-1)\mathbf{1}$ for $a\in\mathfrak{g}_\mathbb{Z}$. Moreover, if $U$ is a 
finite-dimensional $\mathfrak{g}$-module with integral form $U_\mathbb{Z}$, 
$V_{\widehat{\mathfrak{g}}}(\ell,U)_\mathbb{Z}$ and 
$L_{\widehat{\mathfrak{g}}}(\ell,U)_\mathbb{Z}$ are the 
$V_{\widehat{\mathfrak{g}}}(\ell,0)_\mathbb{Z}$-modules generated by 
$U_\mathbb{Z}$.
\end{propo}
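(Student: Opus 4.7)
Let $R$ denote the vertex subring of $V_{\widehat{\mathfrak{g}}}(\ell,0)$ generated by $\{a(-1)\mathbf{1}:a\in\mathfrak{g}_\mathbb{Z}\}$. The plan is to apply Proposition \ref{zgen} and the formula (\ref{liealgops}) to rewrite $R$ explicitly as the $\mathbb{Z}$-span of all elements $a_1(n_1)\cdots a_k(n_k)\mathbf{1}$ with $a_i\in\mathfrak{g}_\mathbb{Z}$ and $n_i\in\mathbb{Z}$, and then to match this concrete family with the spanning set for $V_{\widehat{\mathfrak{g}}}(\ell,0)_\mathbb{Z}=U_\mathbb{Z}(\widehat{\mathfrak{g}})\mathbf{1}$ provided by Theorem \ref{intformvect}.

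For the inclusion $R\subseteq V_{\widehat{\mathfrak{g}}}(\ell,0)_\mathbb{Z}$, each operator $a(n)$ with $a\in\mathfrak{g}_\mathbb{Z}$ lies in $\widehat{\mathfrak{g}}_\mathbb{Z}\subseteq U_\mathbb{Z}(\widehat{\mathfrak{g}})$; by the multiplicative closure established in Remark \ref{PBWintform}, the full product $a_1(n_1)\cdots a_k(n_k)$ sits in $U_\mathbb{Z}(\widehat{\mathfrak{g}})$, so its action on $\mathbf{1}$ lands in $V_{\widehat{\mathfrak{g}}}(\ell,0)_\mathbb{Z}$. Conversely, $V_{\widehat{\mathfrak{g}}}(\ell,0)_\mathbb{Z}=U_\mathbb{Z}(\widehat{\mathfrak{g}}_-)\mathbf{1}$ is $\mathbb{Z}$-spanned by ordered PBW monomials $a_1(-n_1)\cdots a_k(-n_k)\mathbf{1}$ in a fixed basis of $\mathfrak{g}_\mathbb{Z}$ with each $n_i\geq 1$, and each such vector is precisely the coefficient of $x_1^{n_1-1}\cdots x_k^{n_k-1}$ in $Y(a_1(-1)\mathbf{1},x_1)\cdots Y(a_k(-1)\mathbf{1},x_k)\mathbf{1}$, hence visibly lies in $R$.

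The module statement proceeds by the identical template. For $W=V_{\widehat{\mathfrak{g}}}(\ell,U)$ or $L_{\widehat{\mathfrak{g}}}(\ell,U)$, Proposition \ref{zgen} identifies the $V_{\widehat{\mathfrak{g}}}(\ell,0)_\mathbb{Z}$-submodule of $W$ generated by $U_\mathbb{Z}$ with the $\mathbb{Z}$-span of elements $a_1(n_1)\cdots a_k(n_k)u$ for $a_i\in\mathfrak{g}_\mathbb{Z}$, $n_i\in\mathbb{Z}$, $u\in U_\mathbb{Z}$; these all lie in $U_\mathbb{Z}(\widehat{\mathfrak{g}})U_\mathbb{Z}=W_\mathbb{Z}$ by the same argument, and conversely the ordered PBW monomials $a_1(-n_1)\cdots a_k(-n_k)u$ spanning $W_\mathbb{Z}$ appear as coefficients of the generating expressions applied to $u$. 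The main point requiring verification is that the bookkeeping of commutators remains integral: reordering via (\ref{affcomm}) produces only integer-coefficient lower-degree monomials, because $[a,b]\in\mathfrak{g}_\mathbb{Z}$ and $\langle a,b\rangle\in\mathbb{Z}$ for $a,b\in\mathfrak{g}_\mathbb{Z}$ and because $\mathbf{k}$ acts as the integer $\ell$ on $W$; meanwhile positive modes reaching $\mathbf{1}$ annihilate it, and those reaching $U_\mathbb{Z}$ either annihilate it (for $n>0$) or preserve $U_\mathbb{Z}$ by its $\mathfrak{g}_\mathbb{Z}$-invariance (for $n=0$).
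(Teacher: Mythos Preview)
Your proof is correct and follows essentially the same approach as the paper's: both identify the vertex subring via Proposition~\ref{zgen} and (\ref{liealgops}) as the $\mathbb{Z}$-span of arbitrary monomials $a_1(n_1)\cdots a_k(n_k)\mathbf{1}$, and then match this with the PBW description $U_\mathbb{Z}(\widehat{\mathfrak{g}}_-)\mathbf{1}$. The only cosmetic difference is that for the inclusion $R\subseteq V_{\widehat{\mathfrak{g}}}(\ell,0)_\mathbb{Z}$ you invoke the multiplicative closure of $U_\mathbb{Z}(\widehat{\mathfrak{g}})$ directly from Remark~\ref{PBWintform}, whereas the paper re-derives this on the spot by pushing nonnegative modes to the right with (\ref{affcomm}); your closing remarks on commutator bookkeeping recover exactly that computation, so the two arguments coincide.
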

\begin{proof}
 Let $W$ be the module $V_{\widehat{\mathfrak{g}}}(\ell,U)$ or 
$L_{\widehat{\mathfrak{g}}}(\ell,U)$, where $U$ is possibly $\mathbb{C}1_\ell$. 
From the construction in Remark \ref{PBWintform},
 \begin{equation}
  W_\mathbb{Z}=U_\mathbb{Z}(\widehat{\mathfrak{g}}_-) U_\mathbb{Z}
 \end{equation}
is the integral span of vectors of the form
\begin{equation}\label{propspan}
 a_1(-n_1)\cdots a_k(-n_k)u
\end{equation}
where $a_i\in\mathfrak{g}_\mathbb{Z}$, $n_i>0$, and $u\in U_\mathbb{Z}$. On the 
other hand, since 
\begin{equation}
 Y(a(-1)\mathbf{1},x)=\sum_{n\in\mathbb{Z}} a(n) x^{-n-1}
\end{equation}
for $a\in\mathfrak{g}$, Proposition \ref{zgen} implies that the vertex subring 
generated by the $a(-1)\mathbf{1}$ is the integral span of vectors of the form
\begin{equation}\label{propspan2}
 a_1(n_1)\cdots a_k(n_k)\mathbf{1}
\end{equation}
for $a_i\in\mathfrak{g}_\mathbb{Z}$ and $n_i\in\mathbb{Z}$. In the case that 
$U=\mathbb{C}1_\ell$, this is the same as (\ref{propspan}) because 
$a(n)\mathbf{1}=0$ if $n>0$ and any $a_i(n_i)$ occurring in (\ref{propspan2}) 
with $n_i>0$ can be moved to the right using the commutation relations 
(\ref{affcomm}). This proves the first assertion of the proposition, and the 
second follows similarly.
\end{proof}

\begin{corol}
 In the setting of Proposition \ref{basicaffint}, 
$L_{\widehat{\mathfrak{g}}}(\ell,0)_\mathbb{Z}$ is the vertex ring generated by 
vectors of the form $a(-1)\mathbf{1}$ for $a\in\mathfrak{g}_\mathbb{Z}$.
\end{corol}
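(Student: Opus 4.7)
The plan is to deduce the corollary from Proposition \ref{basicaffint} via the surjective vertex algebra quotient $\pi \colon V_{\widehat{\mathfrak{g}}}(\ell,0) \twoheadrightarrow L_{\widehat{\mathfrak{g}}}(\ell,0)$, which sends $\mathbf{1}$ to $\mathbf{1}$ and each $a(-1)\mathbf{1}$ to $a(-1)\mathbf{1}$ and intertwines vertex operators. First I would invoke the first assertion of Proposition \ref{basicaffint} to identify the vertex subring of $V_{\widehat{\mathfrak{g}}}(\ell,0)$ generated over $\mathbb{Z}$ by $\lbrace a(-1)\mathbf{1} : a \in \mathfrak{g}_\mathbb{Z}\rbrace$ with $V_{\widehat{\mathfrak{g}}}(\ell,0)_\mathbb{Z}$. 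Applying $\pi$ and using Proposition \ref{zgen}, which describes a vertex subring explicitly as the $\mathbb{Z}$-span of coefficients of iterated vertex operator products applied to $\mathbf{1}$, the image $\pi(V_{\widehat{\mathfrak{g}}}(\ell,0)_\mathbb{Z})$ is precisely the vertex subring of $L_{\widehat{\mathfrak{g}}}(\ell,0)$ generated by the images $a(-1)\mathbf{1} \in L_{\widehat{\mathfrak{g}}}(\ell,0)$.

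Next I would show that $\pi(V_{\widehat{\mathfrak{g}}}(\ell,0)_\mathbb{Z}) = L_{\widehat{\mathfrak{g}}}(\ell,0)_\mathbb{Z}$. Taking $U = \mathbb{C}1_\ell$ so that $U_\mathbb{Z} = \mathbb{Z}\mathbf{1}$, Theorem \ref{intformvect} gives $V_{\widehat{\mathfrak{g}}}(\ell,0)_\mathbb{Z} = U_\mathbb{Z}(\widehat{\mathfrak{g}})\mathbf{1}$ inside $V_{\widehat{\mathfrak{g}}}(\ell,0)$ and likewise $L_{\widehat{\mathfrak{g}}}(\ell,0)_\mathbb{Z} = U_\mathbb{Z}(\widehat{\mathfrak{g}})\mathbf{1}$ inside $L_{\widehat{\mathfrak{g}}}(\ell,0)$. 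Since $\pi$ is $U(\widehat{\mathfrak{g}})$-equivariant and preserves the vacuum, it carries the first of these onto the second, which combined with the previous step yields the claim.

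There is no real obstacle beyond the bookkeeping of matching up the two descriptions of the same subset, since all the content is packaged into Proposition \ref{basicaffint} and the observation that passage to the irreducible quotient respects the generating set. As an alternative route that avoids any appeal to $\pi$, one can mirror the proof of Proposition \ref{basicaffint} verbatim inside $L_{\widehat{\mathfrak{g}}}(\ell,0)$: Proposition \ref{zgen} writes the vertex subring generated by $\lbrace a(-1)\mathbf{1}\rbrace$ as the $\mathbb{Z}$-span of coefficients $a_1(n_1) \cdots a_k(n_k)\mathbf{1}$ with $a_i \in \mathfrak{g}_\mathbb{Z}$ and $n_i \in \mathbb{Z}$, and then the affine commutation relations (\ref{affcomm}), whose central contributions are integer multiples of $\ell \in \mathbb{Z}$, reduce this to the $\mathbb{Z}$-span of monomials with all $n_i < 0$, which is exactly $U_\mathbb{Z}(\widehat{\mathfrak{g}}_-)\mathbf{1} = L_{\widehat{\mathfrak{g}}}(\ell,0)_\mathbb{Z}$.
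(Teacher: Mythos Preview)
Your proposal is correct, and the alternative route you sketch at the end is exactly the paper's own proof: the paper simply observes that $L_{\widehat{\mathfrak{g}}}(\ell,0)_\mathbb{Z}$ is the integral span of vectors of the form (\ref{propspan2}), which by Proposition \ref{zgen} coincides with the vertex subring generated by the $a(-1)\mathbf{1}$, just as in the proof of Proposition \ref{basicaffint}. Your primary route via the quotient map $\pi$ is a perfectly valid repackaging of the same content and requires no additional ideas.
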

\begin{proof}
 This follows because $L_{\widehat{\mathfrak{g}}}(\ell,0)_\mathbb{Z}$ is the integral span 
of vectors of the form (\ref{propspan2}), which is also the vertex subring 
generated by vectors of the form $a(-1)\mathbf{1}$ for 
$a\in\mathfrak{g}_\mathbb{Z}$, just as in the proof of Proposition 
\ref{basicaffint}.
\end{proof}

If $\mathfrak{g}$ is a finite-dimensional simple Lie algebra, there is another
way to obtain integral structure in modules for $\widehat{\mathfrak{g}}$ 
(\cite{G}; see also \cite{M}, \cite{P}). Consider a Chevalley basis for 
$\mathfrak{g}$ where for a root $\alpha$, $x_\alpha$ is the basis vector of the 
corresponding root space. Then the subring 
$U_\mathbb{Z}(\widehat{\mathfrak{g}})\subseteq U(\widehat{\mathfrak{g}})$ 
generated by the elements $(x_{\alpha}\otimes t^n)^k/k!$ for $k\geq 0$, 
$n\in\mathbb{Z}$, and $\alpha $ a root of $\mathfrak{g}$ is an integral form of 
$U(\widehat{\mathfrak{g}})$ which satisfies (\ref{PBWdecomp}) and 
(\ref{universalgrading}); moreover, if $U$ is a finite-dimensional irreducible 
$\mathfrak{g}$-module with highest weight vector $v_0$, then 
$U_\mathbb{Z}(\widehat{\mathfrak{g}}_0)\cdot v_0$ is an integral form of $U$ as 
long as $\mathbf{k}$ acts as an integer $\ell$; here 
$U_\mathbb{Z}(\widehat{\mathfrak{g}}_0)=U_\mathbb{Z}(\widehat{\mathfrak{g}})\cap 
U(\widehat{\mathfrak{g}}_0)$. Consequently any finite-dimensional 
$\mathfrak{g}$-module has a $U_\mathbb{Z}(\widehat{\mathfrak{g}}_0)$-invariant integral 
form since finite-dimensional $\mathfrak{g}$-modules are completely reducible. 
Thus we can apply Theorem \ref{intformvect} and conclude that for any 
finite-dimensional $\mathfrak{g}$-module $U$, the 
$\widehat{\mathfrak{g}}$-modules $V_{\widehat{\mathfrak{g}}}(\ell, U)$ and 
$L_{\widehat{\mathfrak{g}}}(\ell, U)$ when $\ell\in\mathbb{Z}$ have 
$U_\mathbb{Z}(\widehat{\mathfrak{g}})$-invariant integral forms which are 
compatible with the conformal weight gradings.

We can now show that the integral forms obtained using this 
$U_\mathbb{Z}(\widehat{\mathfrak{g}})$ for a finite-dimensional simple Lie 
algebra $\mathfrak{g}$ have vertex algebraic integral structure:

\begin{theo}\label{affalgzform}
 Suppose $\ell\in\mathbb{Z}$; the integral form 
$V_{\widehat{\mathfrak{g}}}(\ell,0)_\mathbb{Z}$ is
the vertex subring of $V_{\widehat{\mathfrak{g}}}(\ell,0)$ generated by the
vectors $\frac{x_\alpha(-1)^k}{k!}\mathbf{1}$ where $k\geq 0$ and $x_\alpha$ is
the root vector corresponding to the root $\alpha$ in the chosen Chevalley basis
of $\mathfrak{g}$. Moreover, if $U$ is a finite-dimensional 
$\mathfrak{g}$-module with integral form $U_\mathbb{Z}$ and $W$ is 
$V_{\widehat{\mathfrak{g}}}(\ell,U)$ or $L_{\widehat{\mathfrak{g}}}(\ell,U)$, 
then $W_\mathbb{Z}$ is the 
 $V_{\widehat{\mathfrak{g}}}(\ell,0)_\mathbb{Z}$-module generated by 
$U_\mathbb{Z}$.
\end{theo}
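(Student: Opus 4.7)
The plan is to adapt the argument of Proposition \ref{basicaffint}, with divided powers playing the role of ordinary monomials. Let $R$ denote the vertex subring of $V_{\widehat{\mathfrak{g}}}(\ell,0)$ generated by the vectors $\frac{x_\alpha(-1)^k}{k!}\mathbf{1}$. By Proposition \ref{zgen}, $R$ is the $\mathbb{Z}$-span of coefficients of products
\[
Y\left(\tfrac{x_{\alpha_1}(-1)^{k_1}}{k_1!}\mathbf{1},x_1\right)\cdots Y\left(\tfrac{x_{\alpha_m}(-1)^{k_m}}{k_m!}\mathbf{1},x_m\right)\mathbf{1},
\]
while by Theorem \ref{intformvect}, $V_{\widehat{\mathfrak{g}}}(\ell,0)_\mathbb{Z}=U_\mathbb{Z}(\widehat{\mathfrak{g}}_-)\mathbf{1}$ is the $\mathbb{Z}$-span of ordered products of divided powers $\frac{x_\alpha(-n)^k}{k!}$ acting on $\mathbf{1}$, with $n\geq 1$.

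For the inclusion $R\subseteq V_{\widehat{\mathfrak{g}}}(\ell,0)_\mathbb{Z}$, I would use the $U_\mathbb{Z}(\widehat{\mathfrak{g}})$-invariance of $V_{\widehat{\mathfrak{g}}}(\ell,0)_\mathbb{Z}$ guaranteed by Theorem \ref{intformvect}, reducing the claim to showing that every mode of each generator $\frac{x_\alpha(-1)^k}{k!}\mathbf{1}$ lies in $U_\mathbb{Z}(\widehat{\mathfrak{g}})$. The key observation is that $[x_\alpha,x_\alpha]=0$ and, for the Chevalley-normalized form, $\langle x_\alpha,x_\alpha\rangle=0$, so by \eqref{affcomm} the modes $x_\alpha(m)$ all commute among themselves. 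Consequently
\[
Y\left(\tfrac{x_\alpha(-1)^k}{k!}\mathbf{1},x\right)=\tfrac{1}{k!}\bigl{:}Y(x_\alpha(-1)\mathbf{1},x)^k\bigr{:},
\]
and when the right-hand side is expanded and equal modes $x_\alpha(m)$ are grouped, the prefactor $\frac{1}{k!}$ absorbs multinomial coefficients into products of divided powers $\frac{x_\alpha(m)^j}{j!}$, each of which is a generator of Garland's $U_\mathbb{Z}(\widehat{\mathfrak{g}})$.

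For the inclusion $V_{\widehat{\mathfrak{g}}}(\ell,0)_\mathbb{Z}\subseteq R$, the strategy is to extract each spanning element $\prod_i\frac{x_{\alpha_i}(-n_i)^{k_i}}{k_i!}\mathbf{1}$ as a coefficient of a suitable product of vertex operators applied to $\mathbf{1}$, with $v_i=\frac{x_{\alpha_i}(-1)^{k_i}}{k_i!}\mathbf{1}$. Since $x_\beta(n)\mathbf{1}=0$ for $n\geq 0$, positive-mode contributions vanish on $\mathbf{1}$ exactly as in the proof of Proposition \ref{basicaffint}, and the commutativity of the $x_{\alpha_i}$-modes cleanly converts the multinomial expansion into products of divided powers. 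The commutator terms arising when distinct root vectors are reordered produce monomials of strictly lower weight, so an induction on total weight (organized via the natural grading on $U(\widehat{\mathfrak{g}}_-)$) yields the claim. The module statement then follows by the analogous argument using the module version of Proposition \ref{zgen}, noting that $W_\mathbb{Z}=U_\mathbb{Z}(\widehat{\mathfrak{g}}_-)U_\mathbb{Z}$.

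The main obstacle is the bookkeeping in the second inclusion: one must verify that the commutator corrections among different $x_\alpha$-modes, which produce both Cartan-valued terms (via $[x_\alpha,x_{-\alpha}]$) and other root vector terms, feed back into $R$ at the inductive step with integer coefficients. This requires using Garland's identities controlling divided-power commutators together with an induction on total conformal weight to ensure integrality is preserved.
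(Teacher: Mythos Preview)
Your argument for the inclusion $R\subseteq V_{\widehat{\mathfrak{g}}}(\ell,0)_\mathbb{Z}$ is essentially the paper's: commutativity of the $x_\alpha(m)$ among themselves turns $Y\bigl(\frac{x_\alpha(-1)^k}{k!}\mathbf{1},x\bigr)$ into a sum over partitions of products of divided powers $\frac{x_\alpha(n_j)^{i_j}}{i_j!}$, which lie in $U_\mathbb{Z}(\widehat{\mathfrak{g}})$.

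The reverse inclusion, however, is where your plan runs into trouble. You propose to extract each spanning vector $\prod_i\frac{x_{\alpha_i}(-n_i)^{k_i}}{k_i!}\mathbf{1}$ as a single coefficient and clean up the remainder by an induction on conformal weight. But already for a single root this fails: the coefficient of $x^{nk-k}$ in $Y\bigl(\frac{x_\alpha(-1)^k}{k!}\mathbf{1},x\bigr)\mathbf{1}$ is $\frac{x_\alpha(-n)^k}{k!}\mathbf{1}$ \emph{plus} all other partition terms $\frac{x_\alpha(-n_1)^{i_1}}{i_1!}\cdots\frac{x_\alpha(-n_r)^{i_r}}{i_r!}\mathbf{1}$ with $\sum i_j n_j=nk$, $\sum i_j=k$, and these all have the \emph{same} conformal weight $nk$. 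Your weight induction does not separate them. Moreover, when you pass to several roots, the commutator $[x_\alpha(m),x_\beta(n)]$ also preserves conformal weight (it lowers PBW filtration degree, not weight), so the phrase ``strictly lower weight'' is not accurate, and you are forced to invoke Garland's divided-power commutator identities, which is heavy machinery you do not actually need.

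The paper sidesteps all of this with a cleaner idea: rather than extracting elements, it shows that each \emph{operator} $\frac{x_\alpha(m)^k}{k!}$ preserves $R$, by induction on $k$ alone. Taking $l=mk$ in the partition expansion, one has
\[
\frac{x_\alpha(m)^k}{k!}=\Bigl(\tfrac{x_\alpha(-1)^k}{k!}\mathbf{1}\Bigr)_{mk+k-1}-\sum_{\substack{\text{partitions }\neq(m,\ldots,m)}}\frac{x_\alpha(n_1)^{i_1}}{i_1!}\cdots\frac{x_\alpha(n_r)^{i_r}}{i_r!},
\]
and in every non-trivial partition each exponent $i_j$ is strictly less than $k$, so those terms preserve $R$ by the inductive hypothesis. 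Hence $\frac{x_\alpha(m)^k}{k!}$ preserves $R$. Since these operators generate $U_\mathbb{Z}(\widehat{\mathfrak{g}})$ as a ring and $\mathbf{1}\in R$, one gets $V_{\widehat{\mathfrak{g}}}(\ell,0)_\mathbb{Z}=U_\mathbb{Z}(\widehat{\mathfrak{g}})\mathbf{1}\subseteq R$ with no need to commute different root vectors at all. The module statement follows by the identical argument with $\mathbf{1}$ replaced by $u\in U_\mathbb{Z}$.
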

\begin{proof}
 Since  $U_\mathbb{Z}(\widehat{\mathfrak{g}})$ is generated as a ring by the
divided powers $(x_{\alpha}\otimes t^n)^k/k!$ where $\alpha $ is a root of
$\mathfrak{g}$ and $k\geq 0$, we can express
$W_\mathbb{Z}=U_\mathbb{Z}(\widehat{\mathfrak{g}})\cdot U_\mathbb{Z}$ as the 
$\mathbb{Z}$-span of products of the form
\begin{equation}\label{affintspan2}
 \frac{x_{\alpha_1}(m_1)^{k_1}}{k_1!}\cdots
\frac{x_{\alpha_n}(m_n)^{k_n}}{k_n!}\cdot u,
\end{equation}
where the $\alpha_i$ are roots of $\mathfrak{g}$, $m_i\in\mathbb{Z}$,
$k_i\geq 0$, and $u\in U_\mathbb{Z}$ (where $U$ could be $\mathbb{C}1_\ell$, in 
which case $u=\mathbf{1}$). By Proposition
\ref{zgen}, we need to show that the $\mathbb{Z}$-span of such products equals
the $\mathbb{Z}$-span of coefficients of products of the form
\begin{equation}\label{affintspan}
 Y\left(\frac{x_{\alpha_1}(-1)^{k_1}}{k_1!},x_1\right)\cdots
Y\left(\frac{x_{\alpha_n}(-1)^{k_n}}{k_n!},x_n\right) u.
\end{equation}

First we will analyze the vertex operator associated to a generator
$x_\alpha(-1)^k/k!,$ where $\alpha$ is any root of $\mathfrak{g}$. First,
observe that for any $m,n\in\mathbb{Z}$,
\begin{equation}
 [x_\alpha(m),x_\alpha(n)]=[x_\alpha,x_\alpha](m+n)+\langle
x_\alpha,x_\alpha\rangle m\delta_{m+n,0}=0.
\end{equation}
By (\ref{liealgops}), this means that 
\begin{equation}
 [Y(x_\alpha(-1)\mathbf{1},x_1),Y(x_\alpha(-1)\mathbf{1},x_2)]=0.
\end{equation}
Thus for any $k\geq 0$, the product $Y(x_\alpha(-1)\mathbf{1},x)^k$ is
well-defined and equals the normal-ordered product $\nordcirc
Y(x_{\alpha}(-1)\mathbf{1},x)^k\nordcirc $ (see \cite{LL} (3.8.4)). Then we can
apply Proposition 3.10.2 in \cite{LL} to conclude that
\begin{eqnarray}\label{partop}
 Y\left(\frac{x_{\alpha}(-1)^{k}}{k!}\mathbf{1},x\right)& = 
&\frac{Y(x_\alpha(-1)\mathbf{1},x)^k}{k!}\nonumber\\
& = & \frac{1}{k!}\left( \sum_{n_1\in\mathbb{Z}} x_\alpha(n_1) x^{-n_1-1}\right)
\cdots\left( \sum_{n_k\in\mathbb{Z}} x_\alpha(n_k) x^{-n_k-1}\right) \nonumber\\
& = &\frac{1}{k!}\sum_{l\in\mathbb{Z}}\left( \sum_{n_1+\cdots +n_k=l}
x_\alpha(n_1)\cdots x_\alpha(n_k)\right) x^{-l-k}.\\\nonumber
\end{eqnarray}
Consider the coefficient of $x^{-l-k}$ in (\ref{partop}) for any
$l\in\mathbb{Z}$, that is,
\begin{equation}\label{partop2}
 \frac{1}{k!}\sum_{n_1+\ldots +n_k=l} x_\alpha(n_1)\cdots x_\alpha(n_k).
\end{equation}
Since all the $x_\alpha(n)$ operators commute with each other, for any
$\sigma\in S_n$, $x_\alpha(n_1)\cdots x_\alpha(n_k)=x_\alpha(n_{\sigma
(1)})\cdots x_\alpha(n_{\sigma (k)})$. Thus we can collect some of the terms in
(\ref{partop2}).

Take any ``partition'' of $l$ into exactly $k$ parts, where parts are 
allowed to be negative or zero, as well as positive. Suppose the distinct parts
are $n_1,\ldots 
,n_m$ where $n_j$ occurs $i_j$ times, that is, $n_j\in\mathbb{Z}$, $n_1 
i_1+\ldots +n_m i_m=l$, and $i_1+\ldots +i_m=k$. The terms in the sum in 
(\ref{partop2}) corresponding to this partition are $x_\alpha(n_1)^{i_1}\cdots 
x_\alpha(n_m)^{i_m}$ and all permutations. The number 
of distinct permutations is
\begin{equation}
 \binom{k}{i_1}\binom{k-i_1}{i_2}\cdots\binom{k-i_1-\ldots -i_{m-1}}{i_m}
\end{equation}
(note that $\binom{k-i_1-\ldots -i_{m-1}}{i_m}=\binom{i_m}{i_m}=1$). This 
equals
\begin{equation}
 \frac{k!}{(k-i_1)!\,i_1!}\frac{(k-i_1)!}{(k-i_1-i_2)!\,i_2!}\cdots
1=\frac{k!}{i_1!\,i_2!\cdots i_m!}.
\end{equation}
Hence the sum of all terms in (\ref{partop2}) corresponding to this partition is
\begin{equation}
 \frac{1}{k!}\frac{k!}{i_1!\,i_2!\cdots i_m!}x_\alpha(n_1)^{i_1}\cdots
x_\alpha(n_m)^{i_m}=\frac{x_\alpha(n_1)^{i_1}}{i_1!}\cdots\frac{x_\alpha(n_m)^{
i_m}}{i_m!}.
\end{equation}
Thus the coefficient of $x^{-l-k}$ in $Y(x_\alpha(-1)\mathbf{1},x)^k/k!$ is
\begin{equation}\label{partop3}
 \sum_{\substack{\mathrm{partitions}\, \mathrm{of}\, l\\\mathrm{with}\,k\,\mathrm{parts}}}
\frac{x_\alpha(n_1)^{i_1}}{i_1!}\cdots\frac{x_\alpha(n_m)^{i_m}}{i_m!}.
\end{equation}

Considering the case $U=\mathbb{C}\mathbf{1}$, it is clear from 
(\ref{affintspan2}), (\ref{affintspan}) and (\ref{partop3}) that the vertex 
subring generated by the $\frac{x_\alpha(-1)^k}{k!}\mathbf{1}$
is contained in $V_{\widehat{\mathfrak{g}}}(\ell,0)_\mathbb{Z}$. On the
other hand, we can use induction on $k$ to show that for any $m\in\mathbb{Z}$
and $\alpha$ a root of $\mathfrak{g}$, $x_\alpha(m)^k/k!$ preserves the vertex
subring generated by the $\frac{x_\alpha(-1)^k}{k!}\mathbf{1}$. This is
trivially true for $k=0$. If $k>0$, take $l=mk$ in (\ref{partop3}) to conclude
that
\begin{equation}
 \frac{x_\alpha(m)^k}{k!}=\left(
\frac{x_\alpha(-1)^k}{k!}\mathbf{1}\right)_{mk+k-1}-\sum_{\mathrm{partitions}
\neq
(m,\ldots ,m)}
\frac{x_\alpha(n_1)^{i_1}}{i_1!}\cdots\frac{x_\alpha(n_m)^{i_m}}{i_m!}.
\end{equation}
Since each $i_j<k$ on the right side, by induction every term on the right side
preserves the vertex subring, so $x_\alpha(m)^k/k!$ does as well. Since
$\mathbf{1}$ is in any vertex subring, (\ref{affintspan2}) now implies that
$V_{\widehat{\mathfrak{g}}}(\ell,0)_\mathbb{Z}$ is contained in the
vertex subring generated by the $\frac{x_\alpha(-1)^k}{k!}\mathbf{1}.$
In the same way, it follows that for any $U$, 
$V_{\widehat{\mathfrak{g}}}(\ell,U)_\mathbb{Z}$ and 
$L_{\widehat{\mathfrak{g}}}(\ell,U)_\mathbb{Z}$ are the 
$V_{\widehat{\mathfrak{g}}}(\ell,0)_\mathbb{Z}$-modules generated by 
$U_\mathbb{Z}$
\end{proof}

\begin{corol}
 The integral form $L_{\widehat{\mathfrak{g}}}(\ell,0)_\mathbb{Z}$ of 
$L_{\widehat{\mathfrak{g}}}(\ell,0)$ is the integral form of 
$L_{\widehat{\mathfrak{g}}}(\ell,0)$ as a vertex algebra generated by the 
vectors $\frac{x_\alpha(-1)^k}{k!}\mathbf{1}$ where $\alpha$ is a root and 
$k\geq 0$.
\end{corol}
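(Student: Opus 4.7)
The plan is to obtain this corollary as an essentially immediate consequence of Theorem \ref{affalgzform}, mimicking the corollary that followed Proposition \ref{basicaffint}. The point is that all the work has really been done: what remains is to translate between a ``module generated by'' statement and a ``vertex subring generated by'' statement.

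First I would specialize Theorem \ref{affalgzform} to the trivial $\mathfrak{g}$-module $U = \mathbb{C}1_\ell$ with natural integral form $U_\mathbb{Z} = \mathbb{Z}\mathbf{1}$, taking $W = L_{\widehat{\mathfrak{g}}}(\ell,\mathbb{C}1_\ell) = L_{\widehat{\mathfrak{g}}}(\ell,0)$. The second conclusion of that theorem then states that $L_{\widehat{\mathfrak{g}}}(\ell,0)_\mathbb{Z}$ is the $V_{\widehat{\mathfrak{g}}}(\ell,0)_\mathbb{Z}$-submodule generated by $\mathbf{1}$.

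Next I would recall from Theorem \ref{affalgzform} (first assertion) that $V_{\widehat{\mathfrak{g}}}(\ell,0)_\mathbb{Z}$ is the vertex subring of $V_{\widehat{\mathfrak{g}}}(\ell,0)$ generated by the divided powers $\frac{x_\alpha(-1)^k}{k!}\mathbf{1}$. The surjective vertex algebra homomorphism $V_{\widehat{\mathfrak{g}}}(\ell,0)\twoheadrightarrow L_{\widehat{\mathfrak{g}}}(\ell,0)$ sends this vertex subring onto the vertex subring of $L_{\widehat{\mathfrak{g}}}(\ell,0)$ generated by the images of those divided power vectors. By Proposition \ref{zgen}, both the $V_{\widehat{\mathfrak{g}}}(\ell,0)_\mathbb{Z}$-submodule of $L_{\widehat{\mathfrak{g}}}(\ell,0)$ generated by $\mathbf{1}$ and the vertex subring of $L_{\widehat{\mathfrak{g}}}(\ell,0)$ generated by $\{\frac{x_\alpha(-1)^k}{k!}\mathbf{1}\}$ coincide with the $\mathbb{Z}$-span of coefficients of products $Y(u_1,x_1)\cdots Y(u_j,x_j)\mathbf{1}$ with $u_i$ among those divided power generators, so they are equal. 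Combining this identification with the specialization in the previous step yields the corollary.

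There is no real obstacle here; if anything, the only thing to double-check is that $\mathbb{Z}\mathbf{1}$ is indeed an integral form of the one-dimensional $\widehat{\mathfrak{g}}_0$-module $\mathbb{C}1_\ell$ in the sense required by Theorem \ref{intformvect}, which is immediate since $\mathfrak{g}_\mathbb{Z}$ acts trivially and $\mathbf{k}$ acts as the integer $\ell$. Thus the corollary follows and deserves only a brief proof, just as in the corollary to Proposition \ref{basicaffint}.
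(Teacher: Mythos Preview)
Your proposal is correct and follows essentially the same approach as the paper: both invoke Theorem \ref{affalgzform} to identify $L_{\widehat{\mathfrak{g}}}(\ell,0)_\mathbb{Z}$ as the $V_{\widehat{\mathfrak{g}}}(\ell,0)_\mathbb{Z}$-submodule generated by $\mathbf{1}$, then use Proposition \ref{zgen} to recognize this span of coefficients as the vertex subring generated by the divided powers. The paper's only additional remark is that the module vertex operator $Y$ from $V_{\widehat{\mathfrak{g}}}(\ell,0)$ acting on $L_{\widehat{\mathfrak{g}}}(\ell,0)$ coincides with the algebra vertex operator of $L_{\widehat{\mathfrak{g}}}(\ell,0)$ on itself, which is exactly the identification you make via the surjection.
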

\begin{proof}
 We know from the proof of Theorem \ref{affalgzform} that 
$L_{\widehat{\mathfrak{g}}}(\ell,0)_\mathbb{Z}$ is spanned by 
coefficients of products of the form in (\ref{affintspan}), but by Proposition 
\ref{zgen}, this is precisely the vertex subring of 
$L_{\widehat{\mathfrak{g}}}(\ell,0)$ generated by the 
$\frac{x_\alpha(-1)^k}{k!}\mathbf{1}$. Note that although the $Y$ in 
(\ref{affintspan}) is the vertex operator for 
$V_{\widehat{\mathfrak{g}}}(\ell,0)$ acting on 
$L_{\widehat{\mathfrak{g}}}(\ell,0)$, the vertex operator for 
$L_{\widehat{\mathfrak{g}}}(\ell,0)$ acting on itself is defined the same way.
\end{proof}

\begin{corol}\label{affcorol}
 If $\mathfrak{g}$ is of type $A$, $D$, or $E$ and $\ell$ is a positive integer,
the integral form $L_{\widehat{\mathfrak{g}}}(\ell,0)_\mathbb{Z}$ of
$L_{\widehat{\mathfrak{g}}}(\ell,0)$ is generated by the vectors
$\frac{x_\alpha(-1)^k}{k!}\mathbf{1}$ where $\alpha$ is a root of $\mathfrak{g}$
and $0\leq k\leq\ell$.
\end{corol}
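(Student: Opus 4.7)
The plan is to deduce the corollary from the previous one by showing that the generators $\frac{x_\alpha(-1)^k}{k!}\mathbf{1}$ with $k>\ell$ are already zero in $L_{\widehat{\mathfrak{g}}}(\ell,0)$ and hence redundant. By the previous corollary, $L_{\widehat{\mathfrak{g}}}(\ell,0)_\mathbb{Z}$ is the vertex subring generated by $\frac{x_\alpha(-1)^k}{k!}\mathbf{1}$ for all roots $\alpha$ and all $k\geq 0$. Since $x_\alpha(-1)^k = x_\alpha(-1)^{k-\ell-1}\cdot x_\alpha(-1)^{\ell+1}$, the whole reduction boils down to the single vanishing
\begin{equation}
x_\alpha(-1)^{\ell+1}\mathbf{1}=0\quad\text{in }L_{\widehat{\mathfrak{g}}}(\ell,0)
\end{equation}
for every root $\alpha$ of $\mathfrak{g}$. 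Once this is in hand, $\frac{x_\alpha(-1)^k}{k!}\mathbf{1}=0$ for every $k\geq\ell+1$, so the generating set from the previous corollary collapses to the one asserted here.

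To handle the highest root $\theta$ first, I would invoke the standard structure theorem for integrable highest-weight representations of affine Kac--Moody algebras: for $\mathfrak{g}$ simply-laced (so that $\langle\theta,\theta\rangle=2$) and $\ell$ a positive integer, the simple quotient $L_{\widehat{\mathfrak{g}}}(\ell,0)$ is the integrable quotient of $V_{\widehat{\mathfrak{g}}}(\ell,0)$, and the unique maximal proper $\widehat{\mathfrak{g}}$-submodule is the one generated by the singular vector $x_\theta(-1)^{\ell+1}\mathbf{1}$. I would cite this rather than reprove it. This immediately gives $x_\theta(-1)^{\ell+1}\mathbf{1}=0$ in $L_{\widehat{\mathfrak{g}}}(\ell,0)$.

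To pass from $\theta$ to an arbitrary root $\alpha$ I would use the transitivity of the Weyl group $W$ on roots in the simply-laced case. Each reflection $s_\beta$ lifts to an automorphism of $V_{\widehat{\mathfrak{g}}}(\ell,0)$ of the form $\exp(x_\beta(0))\exp(-x_{-\beta}(0))\exp(x_\beta(0))$; these exponentials make sense because $x_{\pm\beta}(0)$ act locally nilpotently on each (finite-dimensional) conformal weight space, and they lie in $U_\mathbb{Z}(\widehat{\mathfrak{g}})$. The lifted operators conjugate $\widehat{\mathfrak{g}}$ to itself, so they preserve the maximal submodule and descend to $L_{\widehat{\mathfrak{g}}}(\ell,0)$; they send $x_\gamma(-1)^{\ell+1}\mathbf{1}$ to $x_{s_\beta\gamma}(-1)^{\ell+1}\mathbf{1}$ up to sign. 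Choosing $w\in W$ with $w\theta=\alpha$ then yields $x_\alpha(-1)^{\ell+1}\mathbf{1}=0$ in $L_{\widehat{\mathfrak{g}}}(\ell,0)$, completing the argument.

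The main obstacle is the input at the heart of the argument: the vanishing $x_\theta(-1)^{\ell+1}\mathbf{1}=0$, which encodes integrability at positive integer level for simply-laced $\mathfrak{g}$. I would not reprove it but cite a standard reference (e.g.\ Kac, \emph{Infinite-dimensional Lie algebras}); everything else is bookkeeping, using the Weyl-group transitivity on roots, which is the reason the simply-laced hypothesis enters.
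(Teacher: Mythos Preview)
Your proposal is correct and follows essentially the same approach as the paper: both reduce the corollary to the previous one via the vanishing $x_\alpha(-1)^{\ell+1}\mathbf{1}=0$ in $L_{\widehat{\mathfrak{g}}}(\ell,0)$ for every root $\alpha$. The paper simply cites this as a well-known fact for any long root (referencing Proposition 6.6.4 of \cite{LL}) and uses that in types $A$, $D$, $E$ every root is long; you instead establish it for the highest root $\theta$ and transport it to all roots via Weyl group transitivity, which is a valid but slightly longer route to the same input.
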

\begin{proof}
 This follows from the well-known fact that for any long root $\alpha$,
$x_\alpha(-1)^{\ell+1}\cdot v_0=0$, where $v_0$ is a highest weight vector of a
standard level $\ell\,$ $\widehat{\mathfrak{g}}$-module (see for example
Proposition 6.6.4 in \cite{LL}).
\end{proof}
\begin{rema}
 The integral form $L_{\widehat{\mathfrak{g}}}(\ell,0)_\mathbb{Z}$ as in 
Corollary \ref{affcorol} is generally larger than the one constructed using the 
integral form of $U(\widehat{\mathfrak{g}})$ discussed in Remark 
\ref{PBWintform}, although they coincide when $\mathfrak{g}$ is a 
finite-dimensional simply-laced simple Lie algebra and $\ell=1$.
\end{rema}

\section{Integral forms for lattice vertex algebras}

We recall the construction of conformal vertex algebras from even lattices. 
Suppose $L$ is a non-degenerate even lattice with form $\left\langle\cdot 
,\cdot \right\rangle $. Consider also the space 
$\mathfrak{h}=\mathbb{C}\otimes_\mathbb{Z} L$, an abelian Lie algebra with a 
(trivially) invariant form. Thus we can form the Heisenberg vertex operator 
algebra $V_{\widehat{\mathfrak{h}}}(1,0)$, which is linearly isomorphic to 
$S(\widehat{\mathfrak{h}}_-)$, the symmetric algebra on 
$\widehat{\mathfrak{h}}_-$. We can also construct the twisted group algebra 
$\mathbb{C}\lbrace L\rbrace$ as follows: given a positive integer $s$, take 
a central extension of $L$ by the cyclic group of $s$ elements:
\begin{equation}
 1\rightarrow\left\langle \kappa\mid\kappa^s=1\right\rangle 
\rightarrow\widehat{L}\bar{\rightarrow} L\rightarrow 1.
\end{equation}
If $\omega_s$ is a primitive $s$th root of unity, let $\mathbb{C}_{\omega_s}$ 
denote the one-dimensional $\langle \kappa\rangle$-module on which $\kappa$ 
acts as $\omega_s$. Then the twisted group algebra is the induced 
$\widehat{L}$-module
\begin{equation}
 \mathbb{C}\lbrace L\rbrace 
=\mathbb{C}[\widehat{L}]\otimes_{\mathbb{C}[\langle\kappa\rangle]}\mathbb{C}
_{\omega_s}.
\end{equation}
It is linearly isomorphic to the group algebra $\mathbb{C}[L]$. We need to 
choose the integer $s$ and the central extension so that the commutator map
$c_0$, defined by the 
condition $ab=ba\kappa^{c_0(\bar{a},\bar{b})}$ for $a,b\in\widehat{L}$, 
satisfies the condition
\begin{equation}
 \omega_s^{c_0(\alpha,\beta)}=(-1)^{\left\langle \alpha,\beta\right\rangle} 
\end{equation}
for $\alpha,\beta\in L$. For instance, since $L$ is even, we can take $s=2$ and 
$c_0(\alpha,\beta)=\left\langle \alpha,\beta\right\rangle $ (mod $2$). (The
resulting vertex operator algebra does not depend up to isomorphism on the
choices of $s$ and the central extension; see \cite{LL}, Proposition 6.5.5.) The
$\widehat{L}$-module 
$\mathbb{C}\lbrace L\rbrace$ is also a module for 
$\mathfrak{h}=\mathfrak{h}\otimes t^0$: 
\begin{equation}
 h(0)(a\otimes 1)=\langle h,\bar{a}\rangle (a\otimes 1)
\end{equation}
for $h\in\mathfrak{h}$ and $a\in\widehat{L}$. Also, for $\alpha\in L$ and $x$ a 
formal variable, define 
a map $x^\alpha$ on $\mathbb{C}\lbrace L\rbrace$ by
\begin{equation}
x^\alpha (b\otimes 1)=(b\otimes 1) x^{\langle\alpha,\bar{b}\rangle}
\end{equation}
for $b\in\widehat{L}$.

We can now extend the vertex operator algebra structure on 
$S(\widehat{\mathfrak{h}}_-)$ to the larger space
\begin{equation}
 V_L=S(\widehat{\mathfrak{h}}_-)\otimes\mathbb{C}\lbrace L\rbrace .
\end{equation}
For $a\in\widehat{L}$, use $\iota(a)$ to denote the element $1\otimes 
(a\otimes 1)\in V_L$. As a vertex algebra, $V_L$ is generated by 
the $\iota(a)$, which have vertex operators
\begin{equation}\label{lattops}
 Y(\iota(a),x)=E^- (-\bar{a},x)E^+ (-\bar{a},x) a x^{\bar{a}}
\end{equation}
where
\begin{equation}
 E^\pm (\alpha, x)=\mathrm{exp}\left( \sum_{n\in\pm\mathbb{Z}_+} 
\frac{\alpha(n)}{n} x^{-n}\right) 
\end{equation}
for any $\alpha\in L$, and $a$ denotes the action of $a\in\widehat{L}$ on 
$\mathbb{C}\lbrace L\rbrace$. The conformal element of $V_L$ is the same as 
the conformal element in the Heisenberg algebra:
\begin{equation}
 \omega =\frac{1}{2}\sum_{i=1}^{\mathrm{dim}\;\mathfrak{h}}
\alpha_i(-1)\alpha_i'(-1)\mathbf{1},
\end{equation}
where $\lbrace\alpha_i\rbrace$ is a basis for $\mathfrak{h}$ and
$\lbrace\alpha_i'\rbrace $ is the corresponding dual basis with respect to
$\langle\cdot,\cdot\rangle$.
 If $L$ is positive definite, 
$V_L$ is a vertex operator algebra in the sense that the finiteness 
restrictions on the weight spaces hold. If $L$ is not positive definite, 
$V_L$ is still a strongly $L$-graded conformal vertex algebra, where for
$\alpha\in L$,
\begin{equation}
 V^\alpha =S(\widehat{\mathfrak{h}}_-)\otimes\iota(a),
\end{equation}
with $a\in\widehat{L}$ such that $\bar{a}=\alpha$.

To obtain modules for $V_L$, consider $L^\circ$, the dual lattice of $L$, and 
construct the space
\begin{equation}
 V_{L^\circ}=S(\widehat{\mathfrak{h}}_-)\otimes\mathbb{C}\lbrace 
L^\circ\rbrace
\end{equation}
in the same way we constructed $V_L$. In particular, we need a central extension
of $L^\circ$ by $\langle\kappa\,\vert\,\kappa^s=1\rangle$, $s$ an even integer,
with commutator map $c_0$, having the property that 
\begin{equation}\label{c0form}
 \omega_s^{c_0(\alpha,\beta)}=(-1)^{\langle\alpha,\beta\rangle}
\end{equation}
for $\alpha,\beta\in L$, where $\omega_s$ is the $s$th root of unity used to
construct $\mathbb{C}\lbrace L^\circ\rbrace$. Such a central extension always
exists (see Remark 6.4.12 in \cite{LL}).

 Then $V_{L^\circ}$ is a $V_L$-module with 
the same action of $\iota(a)$ as in (\ref{lattops}). For any $S\subseteq
L^\circ$, denote by $\mathbb{C}\lbrace S\rbrace$ the subspace of
$\mathbb{C}\lbrace L^\circ\rbrace$ spanned by elements of the form $a\otimes 1$
where $\bar{a}\in S$. Then the spaces
\begin{equation}
 V_{\beta +L}=S(\widehat{\mathfrak{h}}_-)\otimes\mathbb{C}\lbrace 
\beta+L\rbrace ,
\end{equation}
where $\beta$ runs over coset representatives of $L^\circ /L$, exhaust the 
irreducible $V_L$-modules up to equivalence.

The vertex algebra $V_L$ (repectively, the module $V_{L^\circ}$) is spanned by
vectors of the form form
\begin{equation}
 \alpha_1(-n_1)\cdots \alpha_k(-n_k)\iota(b)
\end{equation}
where $\alpha_i\in L$, $n_i\in\mathbb{Z}_+$, and $\bar{b}\in L$ (respectively,
$\bar{b}\in L^\circ$). Such a vector has conformal weight
\begin{equation}
 n_1+\ldots +n_k+\frac{\langle \bar{b},\bar{b}\rangle}{2}\in\mathbb{Q}.
\end{equation}
The modules $V_{L^\circ}$ and $V_{\beta+L}$ where $\beta\in L^\circ$ are modules
for $V_L$ as strongly $L$-graded conformal vertex algebra since they are graded
by $L^\circ$:
\begin{equation}
 V_{L^\circ}=\coprod_{\gamma\in L^\circ} V^\gamma
\end{equation}
where $V^\gamma=S(\widehat{\mathfrak{h}}_-)\otimes\iota(c)$, $c\in L^\circ$ such
that $\bar{c}=\gamma$, and $V_{\beta+L}$ is analogously graded.

In order to obtain integral structure in $V_L$ and its modules, we first need to
show that we can choose the central extension so that we have integral structure
in the twisted group algebra $\mathbb{C}\lbrace L\rbrace$. Given a central
extension $\widehat{L^\circ}$ of $L^\circ$, we can choose a section
$L^\circ\rightarrow \widehat{L^\circ}$, denoted $\beta\mapsto e_\beta$ for
$\beta\in L^\circ$. We define a $2$-cocycle $\varepsilon_0: L^\circ\times
L^\circ\rightarrow\mathbb{Z}/s\mathbb{Z}$ by 
\begin{equation}
 e_\alpha e_\beta =e_{\alpha+\beta}\kappa^{\varepsilon_0(\alpha,\beta)}.
\end{equation}
Conversely, given a $2$-cocycle (such as a bilinear map) $\varepsilon_0:
L^\circ\times L^\circ\rightarrow\mathbb{Z}/s\mathbb{Z}$ we can define a central
extension of $L^\circ$ by $\langle\kappa\,\vert\,\kappa^s=1\rangle$ (see
Proposition 5.1.2 in \cite{FLM}). Given an $\varepsilon_0$, define
$\varepsilon(\alpha,\beta)=\omega_s^{\varepsilon_0(\alpha,\beta)}$. The
following lemma is an adjustment of Remark 6.4.12 in \cite{LL} (see also 
Remark 12.17 in \cite{DL}):
\begin{lemma}\label{centext}
 There exists a central extension $\widehat{L^\circ}$ satisfying (\ref{c0form}) 
and a section
$L^\circ\rightarrow\widehat{L^\circ}$ such that $\varepsilon(\alpha,\beta)=\pm
1$ for any $\alpha,\beta\in L$.
\end{lemma}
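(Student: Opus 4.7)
The plan is to construct the 2-cocycle $\varepsilon_0 : L^\circ \times L^\circ \to \mathbb{Z}/s\mathbb{Z}$ defining the central extension directly as a $\mathbb{Z}$-bilinear form, so that the cocycle condition is automatic and only the commutator identity (\ref{c0form}) and the $\pm 1$ requirement on $L \times L$ need to be verified.

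The setup uses the Smith normal form for the finite-index inclusion $L \hookrightarrow L^\circ$: there exist a $\mathbb{Z}$-basis $\{\gamma_1,\ldots,\gamma_d\}$ of $L^\circ$ and positive integers $n_i$ such that $\{\alpha_i := n_i\gamma_i\}$ is a $\mathbb{Z}$-basis of $L$. Setting $N = \operatorname{lcm}(n_i)$ (which is the exponent of $L^\circ / L$), I would take $s$ to be any even multiple of $2N^2$; this arranges that all the prefactors below are integers and that $\omega_s^{s/2} = -1$. I then define $\varepsilon_0$ as the $\mathbb{Z}$-bilinear extension to $L^\circ \times L^\circ$ of
\begin{equation*}
\varepsilon_0(\gamma_i,\gamma_j) = \begin{cases} \dfrac{s}{2n_in_j}\langle\alpha_i,\alpha_j\rangle & \text{if } i > j,\\[4pt] \dfrac{s}{4n_i^2}\langle\alpha_i,\alpha_i\rangle & \text{if } i = j,\\[4pt] 0 & \text{if } i < j, \end{cases}
\end{equation*}
read modulo $s$, noting that the evenness of $L$ makes the diagonal entries integral and that any bilinear form on an abelian group is automatically a 2-cocycle.

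The verification is then a bilinearity bookkeeping exercise. Substituting $\alpha_i = n_i\gamma_i$ yields $\varepsilon_0(\alpha_i,\alpha_j) = \tfrac{s}{2}\langle\alpha_i,\alpha_j\rangle$ for $i > j$, $\varepsilon_0(\alpha_i,\alpha_i) = \tfrac{s}{2}\cdot\tfrac{\langle\alpha_i,\alpha_i\rangle}{2}$ for $i = j$, and $\varepsilon_0(\alpha_i,\alpha_j) = 0$ for $i < j$, all of which lie in $\tfrac{s}{2}\mathbb{Z}/s\mathbb{Z}$. Bilinearity extends this to $\varepsilon_0(\alpha,\beta) \in \tfrac{s}{2}\mathbb{Z}/s\mathbb{Z}$ for all $\alpha,\beta \in L$, hence $\varepsilon(\alpha,\beta) = \omega_s^{\varepsilon_0(\alpha,\beta)} \in \{\pm 1\}$. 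Antisymmetrizing the same three cases gives $c_0(\alpha_i,\alpha_j) \equiv \tfrac{s}{2}\langle\alpha_i,\alpha_j\rangle \pmod s$ in each case --- with both sides congruent to $0$ when $i = j$ because $\langle\alpha_i,\alpha_i\rangle \in 2\mathbb{Z}$ --- which by bilinearity proves $\omega_s^{c_0(\alpha,\beta)} = (-1)^{\langle\alpha,\beta\rangle}$ for all $\alpha,\beta \in L$, confirming (\ref{c0form}). The main obstacle throughout is purely arithmetic: because $\langle\cdot,\cdot\rangle$ takes values only in $\tfrac{1}{N}\mathbb{Z}$ on $L^\circ$, the scaling factors $s/(2n_in_j)$ and $s/(4n_i^2)$ must be integers, forcing the divisibility $2N^2 \mid s$; once this choice is made, the rest of the argument is formal.
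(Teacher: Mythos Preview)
Your proof is correct and follows essentially the same strategy as the paper's: both use a Smith-normal-form basis of $L^\circ$ compatible with $L$, define $\varepsilon_0$ as a $\mathbb{Z}$-bilinear (hence cocyclic) form that is zero on one side of the diagonal, and then verify the $\pm 1$ and commutator conditions on the $L$-basis by bilinearity. The only cosmetic differences are that the paper uses the convention $i<j$ rather than $i>j$, phrases the choice of $s$ as ``even with $\tfrac{s}{2}\langle\cdot,\cdot\rangle$ integral on $L^\circ$'' rather than your explicit $2N^2\mid s$, and simply sets the diagonal values $\varepsilon_0(\gamma_i,\gamma_i)=0$; your nonzero diagonal term $\tfrac{s}{4}\langle\gamma_i,\gamma_i\rangle$ is harmless (it cancels in $c_0$ and still lands in $\tfrac{s}{2}\mathbb{Z}$ on $L$) but unnecessary.
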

\begin{proof}
It is possible to choose a base $\lbrace\alpha_1,\ldots,\alpha_l\rbrace$ of
$L^\circ$ so that $\lbrace n_1 \alpha_1,\ldots,n_l\alpha_l\rbrace$, where the
$n_i$ are positive integers, forms a base for $L$. Since
$\langle\cdot,\cdot\rangle$ is $\mathbb{Q}$-valued on $L^\circ$, there is a
positive even integer $s$ such that
$\frac{s}{2}\langle\alpha,\beta\rangle\in\mathbb{Z}$ for any $\alpha,\beta\in
L^\circ.$ Then we have the bilinear $2$-cocycle $\varepsilon_0:L^\circ\times
L^\circ\rightarrow\mathbb{Z}/s\mathbb{Z}$ determined by its values on the base:
\begin{equation}
 \varepsilon_0(\alpha_i,\alpha_j)=\left\lbrace
\begin{array}{ccc}  
\frac{s}{2}\langle\alpha_i,\alpha_j\rangle +s\mathbb{Z} & \mathrm{if} & i<j\\ 
0 & \mathrm{if} & i\geq j\\ 
\end{array}.\right.
\end{equation}
Using bilinearity, we have for $i<j$
\begin{equation}
 \varepsilon_0(n_i\alpha_i,n_j\alpha_j)=n_i
n_j\varepsilon_0(\alpha_i,\alpha_j)=n_i
n_j\frac{s}{2}\langle\alpha_i,\alpha_j\rangle +s\mathbb{Z}=\frac{s}{2}\langle
n_i\alpha_i,n_j\alpha_j\rangle+s\mathbb{Z}.
\end{equation}
Since $\langle\cdot,\cdot\rangle$ is $\mathbb{Z}$-valued and bilinear on $L$, it
follows that $\varepsilon(\alpha,\beta)=\pm 1 $ for $\alpha,\beta\in L$. 

The $2$-cocycle $\varepsilon_0$ corresponds to a section of a central extension
$\widehat{L^\circ}$ of $L^\circ$. The corresponding commutator map $c_0$ is
given by
\begin{equation}
c_0(\alpha,\beta)=\varepsilon_0(\alpha,\beta)-\varepsilon_0(\beta,\alpha)
\end{equation}
for $\alpha,\beta\in L^\circ$ since
\begin{equation}
 e_\alpha e_\beta=e_{\alpha+\beta}\kappa^{\varepsilon_0(\alpha,\beta)}=e_\beta
e_\alpha\kappa^{-\varepsilon_0(\beta,\alpha)}\kappa^{\varepsilon_0(\alpha,\beta)
}.
\end{equation}
Thus the commutator map is an alternating $\mathbb{Z}$-bilinear form
$L^\circ\times L^\circ\rightarrow\mathbb{Z}/s\mathbb{Z}$. Since
\begin{equation}
 c_0(n_i\alpha_i,n_j\alpha_j)=\frac{s}{2}\langle
n_i\alpha_i,n_j\alpha_j\rangle+s\mathbb{Z}
\end{equation}
for $i<j$ and since
\begin{equation}
 (\alpha,\beta)\mapsto\frac{s}{2}\langle\alpha,\beta\rangle+s\mathbb{Z}
\end{equation}
is also an alternating bilinear form on $L$ (since $L$ is even), it follows that
\begin{equation}
 c_0(\alpha,\beta)=\frac{s}{2}\langle\alpha,\beta\rangle+s\mathbb{Z}
\end{equation}
for all $\alpha,\beta\in L$. Thus
\begin{equation}
 \omega_s^{c_0(\alpha,\beta)}=\omega_s^{s\langle\alpha,\beta\rangle/2}=(-1)^{
\langle\alpha,\beta\rangle}
\end{equation}
for $\alpha,\beta\in L$, as required.
\end{proof}

Throughout the rest of this section we will use the central extension and
section of Lemma \ref{centext}. Moreover, we will assume that $e_0=1$, so that
$\iota(e_0)=\mathbf{1}$. Thus $V_L$ is generated as vertex algebra by the
$\iota(e_\alpha)$ for $\alpha\in L$. This motivates the following definition:
\begin{defi}\label{lattdef}
 Define $V_{L,\mathbb{Z}}$ to be the vertex subring of $V_L$ 
generated by the $\iota(e_\alpha)$ for $\alpha\in L$. Moreover, for any
$\beta\in L^\circ$, set $V_{\beta+L,\mathbb{Z}} $ equal to the $V_{L,\mathbb{Z}}
$ submodule of $V_{\beta+L}$ generated by $\iota(e_\beta)$. In particular,
$V_{L,\mathbb{Z}}$ itself is the $V_{L,\mathbb{Z}}$ submodule of $V_L$ generated
by $\mathbf{1}$.
\end{defi}
Then we have the following theorem (the algebra part is originally due to 
Borcherds \cite{B} and has also been proved using a different method in \cite{P}
and \cite{DG}):

\begin{theo}\label{lattform}
The vertex subring $V_{L,\mathbb{Z}}$ is an integral form of $V_L$, and for any
$\beta\in L^\circ$, $V_{\beta+L,\mathbb{Z}}$ is an integral form of
$V_{\beta+L}$.
\end{theo}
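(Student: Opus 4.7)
The plan is to show that the vertex subring $V_{L,\mathbb{Z}}$ of Definition \ref{lattdef}, which is automatically closed under vertex operators by construction, coincides with the $\mathbb{Z}$-span of an explicit $\mathbb{C}$-basis of $V_L$; once this is established, $V_{L,\mathbb{Z}}$ will be an integral form as a vector space. By Proposition \ref{zgen}, $V_{L,\mathbb{Z}}$ is the $\mathbb{Z}$-span of coefficients of iterated products $Y(\iota(e_{\alpha_1}),x_1)\cdots Y(\iota(e_{\alpha_k}),x_k)\mathbf{1}$ with $\alpha_i\in L$, so everything reduces to a direct calculation of these products.

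The first step is to apply (\ref{lattops}) and push every $E^+(-\alpha_i,x_i)$ to the right past the $E^-$ operators and the section elements $e_{\alpha_i}x_i^{\alpha_i}$, using the standard Heisenberg commutation identities. Since the $E^+$ factors annihilate $\mathbf{1}$, the iterated product reduces to an expression of the form
\[
\varepsilon\cdot\prod_{i<j}(x_i-x_j)^{\langle\alpha_i,\alpha_j\rangle}\cdot E^-(-\alpha_1,x_1)\cdots E^-(-\alpha_k,x_k)\,\iota(e_{\alpha_1+\cdots+\alpha_k}),
\]
where the prefactor $\varepsilon\in\{\pm 1\}$ combines the cocycle values $\varepsilon(\alpha_i,\alpha_j)$, all of which are $\pm 1$ by Lemma \ref{centext}. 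Since $\langle\alpha_i,\alpha_j\rangle\in\mathbb{Z}$, the factor $\prod_{i<j}(x_i-x_j)^{\langle\alpha_i,\alpha_j\rangle}$ expands as a formal Laurent series with integer coefficients.

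Next I would introduce an integral $\mathbb{Z}$-form of $S(\widehat{\mathfrak{h}}_-)$ tailored to these vertex operators: fixing a $\mathbb{Z}$-basis $\{\gamma_1,\ldots,\gamma_l\}$ of $L$ and letting $S_m(\gamma)$ denote the coefficient of $y^m$ in $E^-(-\gamma,y)$, a candidate $\mathbb{Z}$-basis of $V_L$ consists of ordered products of vectors $S_m(\gamma_i)\mathbf{1}$ multiplied by the various $\iota(e_\alpha)$ for $\alpha\in L$; equivalently, this recovers Borcherds' integral basis. Each such basis vector already lies in $V_{L,\mathbb{Z}}$: extracting coefficients of $Y(\iota(e_{\gamma_i}),x)Y(\iota(e_{\alpha-\gamma_i}),x')\mathbf{1}$ in the computation of the previous step provides the elementary pieces, and iterating, together with the fact that $V_{L,\mathbb{Z}}$ is a vertex subring, produces all basis vectors. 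Conversely, the displayed formula makes the reverse containment essentially automatic, because the coefficients of $E^-(-\alpha_i,x_i)$ are by definition integer combinations of the Schur-type polynomials $S_m(\alpha_i)$, and an arbitrary $\alpha_i\in L$ expands as a $\mathbb{Z}$-linear combination of $\gamma_1,\ldots,\gamma_l$.

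For the module $V_{\beta+L,\mathbb{Z}}$ the argument is parallel: replace $\mathbf{1}$ by $\iota(e_\beta)$ throughout, observe that $\langle\alpha_i,\beta\rangle\in\mathbb{Z}$ since $\beta\in L^\circ$, and invoke Lemma \ref{centext} to keep every cocycle factor $\varepsilon(\alpha_i,\beta)$ in $\{\pm 1\}$. The main obstacle is the combinatorial bookkeeping in the converse direction: one must verify that after expanding the exponentials $E^-(-\alpha,x)$ — whose naive expansion in ordinary monomials in $\alpha(-n)$ introduces denominators $n^k$ — the resulting coefficients do land in the $\mathbb{Z}$-span of the chosen Schur-type basis. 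This is the essential reason why the integral form must be built out of the coefficients $S_m(\gamma)$ of $E^-(-\gamma,x)$ rather than the divided powers $\gamma(-n)^k/k!$, and it is precisely what singles out $\{\iota(e_\alpha):\alpha\in L\}$ as the natural generating set.
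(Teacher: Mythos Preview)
Your approach is close in spirit to the paper's but has a genuine gap in the forward containment. You correctly compute that
\[
Y(\iota(e_{\alpha_1}),x_1)\cdots Y(\iota(e_{\alpha_k}),x_k)\iota(e_\beta)
=\pm\prod_{i<j}(x_i-x_j)^{\langle\alpha_i,\alpha_j\rangle}\,E^-(-\alpha_1,x_1)\cdots E^-(-\alpha_k,x_k)\,\iota(e_{\alpha_1}\cdots e_{\alpha_k}e_\beta),
\]
and you note that the prefactor has integer coefficients, which gives the containment $V_{L,\mathbb{Z}}\subseteq\text{(span of Schur-type basis)}$. But for the reverse containment you need the \emph{inverse} prefactor $\prod_{i<j}(x_i-x_j)^{-\langle\alpha_i,\alpha_j\rangle}$ to also have integer coefficients, so that the $\mathbb{Z}$-span of coefficients of the left side equals that of $E^-(-\alpha_1,x_1)\cdots E^-(-\alpha_k,x_k)\iota(e_\gamma)$. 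This is the ``and vice versa'' step in the paper's proof, and it is what actually lets you extract $S_{m_1}(\gamma_{i_1})\cdots S_{m_k}(\gamma_{i_k})\iota(e_\alpha)$ as an element of $V_{L,\mathbb{Z}}$. Your proposed ``iteration, together with the fact that $V_{L,\mathbb{Z}}$ is a vertex subring'' does not work as stated: applying further vertex operators $Y(\iota(e_{\gamma_j}),x)$ to an element like $S_m(\gamma_i)\iota(e_\alpha)$ introduces $E^+$ factors that act nontrivially on the Heisenberg part, so you do not simply pick up additional $S_m$ factors.

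There is also a second gap: you assert that the ordered products $S_{m_1}(\gamma_{i_1})\cdots S_{m_k}(\gamma_{i_k})\iota(e_\alpha)$ form a $\mathbb{C}$-basis but do not prove linear independence. The paper handles this separately (Proposition~\ref{lattbasisprop}) by an algebraic-independence argument on the polynomials $S_m(\gamma_i)$; the paper's proof of Theorem~\ref{lattform} itself avoids the issue entirely by arguing more softly: it shows each doubly-graded piece $V^\gamma_n\cap V_{\beta+L,\mathbb{Z}}$ is finitely generated (using the $E^-$ description and a finite spanning set $\{\pm\alpha^{(1)},\ldots,\pm\alpha^{(l)}\}$) and lies in a $\mathbb{Q}$-form, hence is a lattice of the correct rank. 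Finally, a minor point: Lemma~\ref{centext} only asserts $\varepsilon(\alpha,\beta)=\pm1$ for $\alpha,\beta\in L$, not for $\beta\in L^\circ$; the statement you need does follow from the explicit bilinear cocycle in its proof, since $\langle n_i\alpha_i,\alpha_j\rangle\in\mathbb{Z}$ for $n_i\alpha_i\in L$ and $\alpha_j\in L^\circ$, but you should say so.
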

\begin{proof}
Since $V_{L,\mathbb{Z}}$ is closed under vertex algebra products by definition,
we just need to show that for any $\beta\in L^\circ$, $V_{\beta+L,\mathbb{Z}}$
is an integral form of the vector space $V_{\beta+L}$ and that
$V_{\beta+L,\mathbb{Z}}$ is compatible with the
$L^\circ\times\mathbb{Q}$-gradation of $V_{\beta+L}$, as in
(\ref{compatibility2}). By Proposition \ref{zgen},  $V_{\beta+L,\mathbb{Z}}$ is
the $\mathbb{Z}$-span of coefficients of products of the form
\begin{equation}\label{spanners}
 Y(\iota(e_{\alpha_1}),x_1)\ldots Y(\iota(e_{\alpha_k}),x_k)\iota(e_\beta)
\end{equation}
where $\alpha_i\in L$.  Since $\iota(e_\beta)$ and $\iota(e_\alpha)$ for
$\alpha\in L$ are homogeneous in the $L^\circ\times\mathbb{Q}$-gradation of
$V_{L^\circ}$, it follows from (\ref{agradingops}) that coefficients of products
as in (\ref{spanners}) are doubly homogeneous. Hence $V_{\beta+L,\mathbb{Z}}$ is
compatible with the $L^\circ\times\mathbb{Q}$-gradation:
\begin{equation}\label{lattcompatibility}
 V_{\beta+L,\mathbb{Z}}=\coprod_{\gamma\in\beta+L,\,n\in\mathbb{Q}}
V^\gamma_n\cap V_{\beta+L,\mathbb{Z}}.
\end{equation}

To show that $V_{\beta+L,\mathbb{Z}}$ is an integral form of the vector space
$V_{\beta+L}$, it is enough to show that for any $\gamma\in\beta+L$,
$V^\gamma_n\cap V_{\beta+L,\mathbb{Z}}$ is a lattice in $V^\gamma_n$ whose rank
is the dimension of $V^\gamma_n$. Since the vectors $\iota(e_\alpha)$ for
$\alpha\in L$ generate $V_L$ as a vertex algebra and since $\iota(e_\beta)$
generates $V_{\beta+L}$ as a $V_L$-module, $V_{\beta+L,\mathbb{Z}}$ spans
$V_{\beta+L}$ over $\mathbb{C}$, and thus by (\ref{lattcompatibility}),
$V^\gamma_n\cap V_{\beta+L,\mathbb{Z}}$ spans $V^\gamma_n$ over $\mathbb{C}$ for
any $\gamma\in\beta+L$. Thus if $V^\gamma_n\cap V_{\beta+L,\mathbb{Z}}$ is a
lattice, its rank is at least the dimension of $V^\gamma_n$.

On the other hand, since $\varepsilon(\alpha,\beta)=\pm 1\in\mathbb{Q}$ for any
$\alpha,\beta\in L$, $V_L$ and $V_{\beta+L}$ have $\mathbb{Q}$-forms, namely,
the $\mathbb{Q}$-subalgebra $V_{L,\mathbb{Q}}\subseteq V_L$ generated by the
$\iota(e_\alpha)$ for $\alpha\in L$, and the $V_{L,\mathbb{Q}}$-submodule
$V_{\beta+L,\mathbb{Q}}$ generated by $\iota(e_\beta)$, respectively.  They are
spanned over $\mathbb{Q}$ by the vectors
\begin{equation}
 \alpha_1(-n_1)\cdots \alpha_k(-n_k)\iota(e_\gamma)
\end{equation}
 where $\alpha_i\in L$, $n_i\in\mathbb{Z}_+$, and $\gamma\in L$ or
$\gamma\in\beta+L$, respectively. Now, any set of vectors in $V^\gamma_n\cap
V_{\beta+L,\mathbb{Z}}$ which is linearly independent over $\mathbb{Z}$ is
linearly independent over $\mathbb{Q}$ since a dependence relation over
$\mathbb{Q}$ reduces to a dependence relation over $\mathbb{Z}$ by clearing
denominators. Thus, since $V_{L,\mathbb{Z}}\subseteq V_{L,\mathbb{Q}}$, any set
of vectors in $V^\gamma_n\cap V_{\beta+L,\mathbb{Z}}$ which is linearly
independent over $\mathbb{Z}$ is linearly independent over $\mathbb{C}$. This
means that if $V^\gamma_n\cap V_{\beta+L,\mathbb{Z}}$ is a lattice, its rank is
no more than the dimension of $V^\gamma_n$. 

Thus we are reduced to showing that for any $\gamma\in\beta+L$, $V^\gamma_n\cap
V_{\beta+L,\mathbb{Z}}$ is a lattice in $V^\gamma_n$, that is, it is spanned
over $\mathbb{Z}$ by a finite set. To show this, we use formula (8.4.22) in
\cite{FLM} to obtain
\begin{eqnarray}
 Y(\iota(e_{\alpha_1}),x_1)\cdots Y(\iota(e_{\alpha_k}),x_k)\iota(e_\beta) & = &
\nordcirc
Y(\iota(e_{\alpha_1}),x_1)\cdots
Y(\iota(e_{\alpha_k}),x_k)\nordcirc\iota(e_\beta)\cdot\nonumber\\
& & \prod_{1\leq i<j\leq k} (x_i-x_j)^{\langle
\alpha_i,\alpha_j\rangle }.\\\nonumber
\end{eqnarray}
where $\alpha_i\in L$. (See also formula (8.6.6) in 
\cite{FLM}; we use the normal ordering notation of \cite{FLM} here.)
Since $L$ is even and in particular integral, the binomial product expansions 
on the right side involve only integer coefficients. Hence coefficients of 
the non-normal ordered product are integral combinations of coefficients of 
the corresponding normal ordered product, and vice versa since the 
coefficients of
\begin{equation}
 \prod_{1\leq i<j\leq k} (x_i-x_j)^{-\langle 
\alpha_i,\alpha_j\rangle}
\end{equation}
are also integers. This shows that we can define $V_{\beta+L,\mathbb{Z}}$ as 
the $\mathbb{Z}$-span of coefficients in products of the form
\begin{equation}\label{nordprod}
 \nordcirc Y(\iota(e_{\alpha_1}),x_1)\cdots
Y(\iota(e_{\alpha_k}),x_k)\nordcirc\iota(e_\beta)
\end{equation}
where $\alpha_i\in L$. By the definition of normal 
ordering, (\ref{nordprod}) equals
\begin{equation}
 x_1^{\langle \alpha_1,\beta\rangle }\cdots x_k^{\langle 
\alpha_k,\beta\rangle } E^-(-\alpha_1,x_1)\cdots 
E^-(-\alpha_k,x_k)\iota(e_{\alpha_1}\cdots e_{\alpha_k} e_\beta).
\end{equation}
Thus, because $\varepsilon(\alpha,\beta)=\pm 1$ for any $\alpha,\beta\in L$,
$V_{\beta+L,\mathbb{Z}}$ is the $\mathbb{Z}$-span of coefficients 
in products of the form
\begin{equation}\label{lattbasis}
 E^{-}(-\alpha_1,x_1)\cdots E^{-}(-\alpha_k,x_k)\iota (e_\gamma)
\end{equation}
where $\alpha_i\in L$ and $\gamma\in \beta+L$. In fact, if $\left\lbrace 
\alpha^{(1)},\ldots,\alpha^{(l)}\right\rbrace $ is a base for $L$ (or any
finite 
spanning set), we may take the $\alpha_i$ in (\ref{lattbasis}) to come 
from $\left\lbrace \pm\alpha^{(1)},\ldots,\pm\alpha^{(l)}\right\rbrace $, since
if 
$\alpha =\sum_{i=1}^l n_i \alpha^{(i)}\in L$, then by properties of
exponentials,
\begin{equation}
 E^-(-\alpha,x)=\prod_{i=1}^l E^-(-\alpha^{(i)},x)^{n_i},
\end{equation}
where if $n_i$ is negative,
$E^-(-\alpha^{(i)},x)^{n_i}=E^-(\alpha^{(i)},x)^{-n_i}.$

Recall that for $\alpha\in L$, 
\begin{equation}
 E^{-}(-\alpha ,x)=\mathrm{exp}\left( 
\sum_{n>0}\frac{\alpha(-n)}{n} x^{n}\right),
\end{equation}
 so the coefficient of $x^m$ in this operator increases weight by $m$. Hence the
coefficient of any monomial of total degree $m$ in (\ref{lattbasis}) is in
$V^{\gamma}_{m+\left\langle \gamma, \gamma\right\rangle /2}$. The coefficients
of such monomials, with $\gamma $ fixed, for which $m+\left\langle \gamma,
\gamma\right\rangle /2=n\,$ span $V_{\beta+L,\mathbb{Z}}\cap V^{\gamma}_n$.
Since 
$\left\lbrace\pm\alpha^{(1)},\ldots ,\pm\alpha^{(l)}\right\rbrace $ is a finite
set, there are only a finite number of ways of obtaining coefficients of
products 
of the form  (\ref{lattbasis}) that lie in $V^{\gamma}_n$. This shows that 
$V_{\mathbb{Z}}\cap V^{\gamma}_n$ is finitely generated for any 
$n\in\mathbb{Q},\gamma\in \beta+L$, completing the proof.
\end{proof}

\begin{rema}
Borcherds' definition of $V_{L,\mathbb{Z}}$ in \cite{B} does not use the vertex
algebra structure of $V_L$; note that $V_L$ is also an associative algebra with
product determined by
\begin{eqnarray}
(\alpha_1(-m_1)\cdots\alpha_j(-m_j)\iota(a))(\beta_1(-n_1)\cdots\beta_k(-n_k)
\iota(b))=&&\nonumber\\
\alpha_1(-m_1)\cdots\alpha_j(-m_j)\beta_1(-n_1)\cdots\beta_k(-n_k)\iota(ab),
&&\nonumber\\\nonumber
\end{eqnarray}
where $\alpha_i,\beta_i,\bar{a},\bar{b}\in L$, $m_i,n_i>0$. Thus $V_L$ is
generated as an associative algebra by the elements 
$\iota(a)$ for $\bar{a}\in L$ and $\alpha(-n)\mathbf{1}$ where $\alpha\in L$ 
and $n>0$. There is a derivation $D$ of this associative algebra structure 
defined on generators by $D\iota(a)=\bar{a}(-1)\iota(a)$ and  
$D\alpha(-n)\mathbf{1}=n\alpha(-n-1)\mathbf{1}$. This is precisely the action 
of $L(-1)$ on these elements, and in fact $D=L(-1)$. In \cite{B}, 
$V_{L,\mathbb{Z}}$ is defined to be the smallest associative subring of $V_L$ 
containing each $\iota(e_\alpha)$ and invariant under $D^i/i!$ for $i\geq 0$. It
is 
claimed that $V_{L,\mathbb{Z}}$ is then generated as associative ring by the 
$\iota(e_\alpha)$ and the coefficients of $E^-(-\alpha,x)\mathbf{1}$, that is, 
$V_{L,\mathbb{Z}}$ is the $\mathbb{Z}$-span of coefficients of products of 
the form (\ref{lattbasis}) (where $\gamma$ is now in $L$). From the proof of 
Theorem \ref{lattform}, we know that such vectors span $V_{L,\mathbb{Z}}$ as 
we have defined it here. Borcherds' claim has been proven in \cite{P}, but we
simplify the proof here:
\end{rema}
\begin{propo}
The definition of $V_{L,\mathbb{Z}}$ in \cite{B} agrees with Definition
\ref{lattdef}.
\end{propo}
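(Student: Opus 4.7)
The plan is to verify both inclusions, using the concrete description of $V_{L,\mathbb{Z}}$ extracted from the proof of Theorem \ref{lattform}: namely, $V_{L,\mathbb{Z}}$ equals the $\mathbb{Z}$-span of coefficients of the products
\[
E^{-}(-\alpha_1,x_1)\cdots E^{-}(-\alpha_k,x_k)\iota(e_\gamma)
\]
with $\alpha_i\in L$ and $\gamma\in L$. Let $V_{L,\mathbb{Z}}^{B}$ temporarily denote Borcherds' ring, and $V_{L,\mathbb{Z}}$ the ring of Definition \ref{lattdef}.

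For $V_{L,\mathbb{Z}}^{B}\subseteq V_{L,\mathbb{Z}}$, I would check that $V_{L,\mathbb{Z}}$ satisfies Borcherds' three defining conditions. Each $\iota(e_\alpha)$ lies in $V_{L,\mathbb{Z}}$ by construction. Closure under the associative product is immediate from the spanning description above: multiplying two such coefficients concatenates the $E^{-}$ factors and fuses the lattice parts via $\iota(e_\gamma)\iota(e_\delta)=\varepsilon(\gamma,\delta)\iota(e_{\gamma+\delta})$, with $\varepsilon(\gamma,\delta)=\pm 1$ by Lemma \ref{centext}. For invariance under the divided powers $D^i/i!=L(-1)^i/i!$, I would invoke the creation property $Y(v,x)\mathbf{1}=e^{xL(-1)}v$, which identifies $L(-1)^i v/i!$ with the coefficient of $x^i$ in $Y(v,x)\mathbf{1}$, equivalently with $v_{-i-1}\mathbf{1}$; this lies in $V_{L,\mathbb{Z}}$ whenever $v$ does, since by Proposition \ref{zgen} a vertex subring is closed under components of its own vertex operators applied to $\mathbf{1}$.

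For $V_{L,\mathbb{Z}}\subseteq V_{L,\mathbb{Z}}^{B}$, I would show by induction on $k$ that each coefficient of $E^{-}(-\alpha_1,x_1)\cdots E^{-}(-\alpha_k,x_k)\iota(e_\gamma)$ lies in $V_{L,\mathbb{Z}}^{B}$. The base $k=0$ is immediate. When $k=1$ and $\gamma=\alpha$, a direct computation using $E^{+}(-\alpha,x)\mathbf{1}=\mathbf{1}$ gives $Y(\iota(e_\alpha),x)\mathbf{1}=E^{-}(-\alpha,x)\iota(e_\alpha)$; combined with the creation property, this exhibits the coefficients of $E^{-}(-\alpha,x)\iota(e_\alpha)$ as the vectors $\frac{D^i}{i!}\iota(e_\alpha)\in V_{L,\mathbb{Z}}^{B}$. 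To pass to arbitrary $\gamma\in L$, I would right-multiply by $\iota(e_{\gamma-\alpha})\in V_{L,\mathbb{Z}}^{B}$ in the associative product to obtain $\pm E^{-}(-\alpha,x)\iota(e_\gamma)$. For the inductive step, I would take the associative product of already-known coefficients of $E^{-}(-\alpha_i,x_i)\iota(e_{\alpha_i})$ for $i=1,\ldots,k$ to produce a $\pm$ sign times a coefficient of $E^{-}(-\alpha_1,x_1)\cdots E^{-}(-\alpha_k,x_k)\iota(e_{\alpha_1+\cdots+\alpha_k})$, then adjust the lattice part by multiplying by $\iota(e_{\gamma-\alpha_1-\cdots-\alpha_k})$.

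I do not expect a genuine obstacle; the argument is essentially bookkeeping, with the key conceptual ingredient being the creation identity $Y(\iota(e_\alpha),x)\mathbf{1}=e^{xL(-1)}\iota(e_\alpha)$, which precisely bridges the vertex-algebraic generators of Definition \ref{lattdef} with the divided-power generators used by Borcherds. The only care needed is tracking the $\pm 1$ factors produced by $\varepsilon$ when rearranging the $\iota(e_\alpha)$'s, which is harmless because $\varepsilon$ takes values in $\{\pm 1\}\subseteq\mathbb{Z}$ on $L\times L$ by Lemma \ref{centext}.
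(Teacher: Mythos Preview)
Your proposal is correct and follows essentially the same approach as the paper: both directions rest on the spanning description from Theorem~\ref{lattform}, the creation identity $e^{xL(-1)}v=Y(v,x)\mathbf{1}$ to handle the divided powers $D^i/i!$, closure of the associative product, and the fact that $\varepsilon$ takes values $\pm 1$ on $L\times L$. The only cosmetic difference is that the paper extracts the coefficients of $E^{-}(-\alpha,x)\mathbf{1}$ (by right-multiplying $E^{-}(-\alpha,x)\iota(e_\alpha)$ by $\iota(e_{-\alpha})$) and then builds everything as associative products of these with a single $\iota(e_\gamma)$, whereas you phrase the same construction as an induction on $k$ and adjust the lattice part at the end; your induction is slightly more elaborate than necessary but entirely valid.
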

\begin{proof}
Let $V_{L,\mathbb{Z}}^*$ denote the structure defined in \cite{B}, 
and $V_{L,\mathbb{Z}}$ the structure of Definition \ref{lattdef}. First, we 
show $V_{L,\mathbb{Z}}\subseteq V_{L,\mathbb{Z}}^*$. Since 
$V_{L,\mathbb{Z}}$ is the $\mathbb{Z}$-span of coefficients of products of 
the form (\ref{lattbasis}) and $V_{L,\mathbb{Z}}^*$ is an associative 
subring, it is enough to show that each $\iota(e_\alpha)$ for $\alpha\in L$ and
the 
coefficients of each $E^-(-\alpha,x)\mathbf{1}$ for $\alpha\in L$ are in 
$V_{L,\mathbb{Z}}^*$. Now, each $\iota(e_\alpha)\in 
V_{L,\mathbb{Z}}^*$ by definition; also $V_{L,\mathbb{Z}}^*$ 
is closed under $L(-1)^i/i!$ for each $i\geq 0$. Thus for any $\alpha\in L$, 
$V_{L,\mathbb{Z}}^*$ must contain the coefficients of
\begin{equation}
 e^{L(-1)x}\iota(e_\alpha)=Y(\iota(e_\alpha),x)\mathbf{1}=E^-(-\alpha,
x)\iota(e_\alpha).
\end{equation}
Recall that in any conformal vertex algebra, $e^{L(-1)x}v=Y(v,x)\mathbf{1}$ 
for any $v$ (formulas (3.1.29) and (3.1.67) in \cite{LL}). Since 
$V_{L,\mathbb{Z}}^*$ is an associative subring, it contains the 
coefficients of
\begin{equation}
 (E^-(-\alpha,x)\iota(e_\alpha))(\iota(e_{-\alpha}))=\pm
E^-(-\alpha,x)\mathbf{1},
\end{equation}
since $\varepsilon(\alpha,-\alpha)=\pm 1$.

On the other hand, $V_{L,\mathbb{Z}}$ is an associative subring of $V_L$ (the 
associative product of any two coefficients of products of the form 
(\ref{lattbasis}) is again such a coefficient). Also, $V_{L,\mathbb{Z}}$ is 
invariant under each $L(-1)^i/i!$ since it is closed under vertex operators 
and $e^{L(-1)x}v=Y(v,x)\mathbf{1}$ for $v\in V_{L,\mathbb{Z}}$. Thus  
$V_{L,\mathbb{Z}}^*\subseteq V_{L,\mathbb{Z}}$, and 
$V_{L,\mathbb{Z}}=V_{L,\mathbb{Z}}^*$.
\end{proof}

\begin{rema}
 If $L$ is the root lattice of a finite-dimensional simple Lie algebra
$\mathfrak{g}$ of type $A$, $D$, or $E$, the lattice vertex operator algebra
$V_L$ is isomorphic to the level $1$ affine Lie algebra vertex operator algebra
$L_{\widehat{\mathfrak{g}}} (1,0)$. The isomorphism is determined by
\begin{equation}
 \iota(e_\alpha)\mapsto\pm x_{\alpha} (-1)\mathbf{1},
\end{equation}
for $\alpha$ a root of $\mathfrak{g}$ and $x_{\alpha}$ the corresponding
root vector in a Chevalley basis for $\mathfrak{g}$. (For the proof of this
result see \cite{FLM} and \cite{DL}.) From the definitions, it is clear the
integral forms $V_{L,\mathbb{Z}}$ and
$L_{\widehat{\mathfrak{g}}}(1,0)_\mathbb{Z}$ correspond under this isomorphism.
\end{rema}

The following result on a $\mathbb{Z}$-base for $V_{\beta+L,\mathbb{Z}}$ has
been proved for the algebra case $\beta=0$ in \cite{DG}, but
since we will need it later, we include the proof for completeness; assume now
that $\left\lbrace \alpha^{(1)},\ldots\alpha^{(l)}\right\rbrace $ is a base for
$L$:
\begin{propo}\label{lattbasisprop}
 The distinct coefficients of monomials in products as in (\ref{lattbasis}) 
form a basis for $V_{\beta+L,\mathbb{Z}}$, where the $\alpha_i$ come from 
$\left\lbrace \alpha^{(1)},\ldots ,\alpha^{(l)}\right\rbrace $ and $\gamma$ is
any 
element of $\beta+L$.
\end{propo}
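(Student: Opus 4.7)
The strategy is to establish three properties of the set of distinct coefficients described in the statement: containment in $V_{\beta+L,\mathbb{Z}}$ (immediate from the definition and Proposition \ref{zgen}), $\mathbb{C}$-linear independence, and $\mathbb{Z}$-spanning of $V_{\beta+L,\mathbb{Z}}$. First I would enumerate the distinct coefficients. Fix $\gamma \in \beta + L$. Since all the operators $\alpha^{(i)}(-n)$ for $n > 0$ commute, the product $E^-(-\alpha_{i_1}, x_1) \cdots E^-(-\alpha_{i_k}, x_k) \iota(e_\gamma)$ is symmetric in the variables attached to the same base element $\alpha^{(i)}$, and setting $x_j = 0$ simply removes the $j$-th factor. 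Hence the distinct coefficients (as $k$ and the $\alpha_{i_j}$'s vary) are parametrized by $l$-tuples of partitions $\Lambda = (\lambda^{(1)}, \ldots, \lambda^{(l)})$, with $\lambda^{(i)}$ the multiset of positive exponents attached to $\alpha^{(i)}$; the associated vector $P_\Lambda \iota(e_\gamma)$ lies in $V^\gamma_n$ with $n = |\Lambda| + \langle\gamma,\gamma\rangle/2$.

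For linear independence I would invoke the classical identity $\sum_{m\geq 0} h_m t^m = \exp\bigl(\sum_{n>0} p_n t^n/n\bigr)$ in the ring $\Lambda_{\mathbb{Q}}$ of symmetric functions. The $\mathbb{Q}$-algebra isomorphism $\Lambda_{\mathbb{Q}} \to \mathbb{Q}[\alpha^{(i)}(-1), \alpha^{(i)}(-2), \ldots]$ sending $p_n \mapsto \alpha^{(i)}(-n)$ carries $h_m$ to the coefficient of $x^m$ in $E^-(-\alpha^{(i)},x)$. Since $\{h_\lambda\}$ is a $\mathbb{Q}$-basis of $\Lambda_{\mathbb{Q}}$, tensoring over the colors $i$ and applying to $\iota(e_\gamma)$ shows that the $P_\Lambda\iota(e_\gamma)$ form a $\mathbb{Q}$-basis of the $\mathbb{Q}$-form of $V^\gamma$; in particular the count in $V^\gamma_n$ is $p_l(n - \langle\gamma,\gamma\rangle/2) = \dim V^\gamma_n$, confirming $\mathbb{C}$-linear independence and that these vectors already form a $\mathbb{C}$-basis of each graded piece.

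For $\mathbb{Z}$-spanning, the proof of Theorem \ref{lattform} already gives that $V_{\beta+L,\mathbb{Z}}$ is the $\mathbb{Z}$-span of coefficients of products as in (\ref{lattbasis}) but with $\alpha_i \in \{\pm\alpha^{(1)},\ldots,\pm\alpha^{(l)}\}$, so the remaining task is to eliminate the negative signs. For a single color $\alpha = \alpha^{(i)}$, write $E^-(-\alpha,x) = \sum_m P_m x^m$ and $E^-(\alpha,x) = \sum_m P'_m x^m$; the identity $E^-(\alpha,x) E^-(-\alpha,x) = 1$ yields the recursion $P'_m = -\sum_{k=1}^m P_k P'_{m-k}$, so inductively $P'_m$ is an integer polynomial in $P_1, \ldots, P_m$. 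Because $E^-$-factors for different colors commute, this substitution is carried out color by color, rewriting any coefficient of a product with some $E^-(\alpha^{(i)}, \cdot)$-factors as an integer combination of products of $P_n^{(i)}$-operators applied to $\iota(e_\gamma)$; each such product is itself the coefficient of an appropriate monomial in a larger positive-base $E^-$-product (introducing one extra variable per $P$-factor, within the same color). The main obstacle is making this final substitution precise: verifying that one may freely introduce auxiliary variables to realize any product of $P_n^{(i)}$-operators on $\iota(e_\gamma)$ as a coefficient in a positive-base $E^-$-product, and tracking the integrality through the color-by-color rewriting.
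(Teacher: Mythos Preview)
Your proposal is correct and, for the spanning part, follows essentially the same route as the paper: both use the relation $E^-(\alpha^{(i)},x)E^-(-\alpha^{(i)},x)=1$ to express the coefficients of $E^-(\alpha^{(i)},x)$ as integer polynomials in the coefficients of $E^-(-\alpha^{(i)},x)$, thereby eliminating the negative base elements left over from Theorem~\ref{lattform}. The ``main obstacle'' you flag is not a real obstacle: since the operators $P_n^{(i)}$ are by definition the coefficients of $E^-(-\alpha^{(i)},x)$, any monomial $P_{n_1}^{(i_1)}\cdots P_{n_r}^{(i_r)}\iota(e_\gamma)$ is literally the coefficient of $x_1^{n_1}\cdots x_r^{n_r}$ in $E^-(-\alpha^{(i_1)},x_1)\cdots E^-(-\alpha^{(i_r)},x_r)\iota(e_\gamma)$, so nothing further needs to be checked.

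For linear independence you take a genuinely different path from the paper. You invoke the symmetric-function identity $\sum_{m\geq 0} h_m t^m = \exp\bigl(\sum_{n>0} p_n t^n/n\bigr)$ and the fact that the $h_\lambda$ form a $\mathbb{Q}$-basis of $\Lambda_{\mathbb{Q}}$, which transports under $p_n\mapsto\alpha^{(i)}(-n)$ to the desired statement. The paper instead argues directly that the coefficients $y_{ij}$ of $E^-(-\alpha^{(i)},x)$ are algebraically independent in $S(\widehat{\mathfrak{h}}_-)$: writing $y_{ij}=\alpha^{(i)}(-j)/j + (\text{terms of higher degree})$, any polynomial relation among the $y_{ij}$ would, upon extracting the lowest-degree part, yield a nontrivial relation among the algebraically independent generators $\alpha^{(i)}(-j)$. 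Your approach is more conceptual and situates the result in classical symmetric-function theory; the paper's is elementary and self-contained, requiring no imported machinery. Both are valid and of comparable length.
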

\begin{proof}
 The proof of Theorem \ref{lattform} shows that the coefficients of monomials 
in (\ref{lattbasis}) span $V_{\beta+L,\mathbb{Z}}$ when the $\alpha_i$ come
from 
$\left\lbrace\pm\alpha^{(1)},\ldots ,\pm\alpha^{(l)}\right\rbrace $. However,
recall 
that $E^-(\alpha^{(i)},x)=E^-(-\alpha^{(i)},x)^{-1}$. If we expand
\begin{equation}
E^-(-\alpha^{(i)},x)=1+\sum_{j\geq 1} y_{ij}x^j,
\end{equation}
where $y_{ij}$ is a polynomial in the $\alpha^{(i)}(-k)$, then
\begin{equation}
 E^-(\alpha^{(i)},x)=\frac{1}{1+\sum_{j\geq 1} y_{ij}x^j} =\sum_{n\geq 0} \left(
-\sum_{j\geq 1} y_{ij} x^j\right) ^n.
\end{equation}
Thus the coefficients of $E^-(\alpha^{(i)},x)$ are polynomials in the 
coefficients of  $E^-(-\alpha^{(i)},x)$. This shows that the coefficients of 
monomials in (\ref{lattbasis}) span $V_{L^\circ,\mathbb{Z}}$ when the
$\alpha_i\in\left\lbrace \alpha^{(1)},\ldots ,\alpha^{(l)}\right\rbrace $.

We also need to show that the indicated coefficients are linearly independent 
(over $\mathbb{Z}$). In fact, they are linearly independent over 
$\mathbb{C}$, and to show this, it is sufficient to show that the polynomials 
$y_{ij}$ are algebraically independent in $S(\widehat{\mathfrak{h}}_-)$. 
Since 
\begin{equation}
 E^-(-\alpha^{(i)},x)=\mathrm{exp}\left( \sum_{n<0} \frac{-\alpha^{(i)}(n)}{n}
x^{-n}\right) =\mathrm{exp}\left( \sum_{n>0} \frac{\alpha^{(i)}(-n)}{n}
x^n\right), 
\end{equation}
$y_{ij}=\alpha^{(i)}(-j)/j+F_{ij}$, where $F_{ij}$ is a polynomial in the 
$\alpha^{(i)}(-k)$ with degree greater than $1$ and with $k<j$.

Now suppose there is a relation
\begin{equation}
 F=\sum c_{i_1\ldots i_k ;j_1\ldots j_k} y_{i_1 j_1}\cdots y_{i_k j_k} =0,
\end{equation}
where all coefficients $c_{i_1\ldots i_k ;j_1\ldots j_k}$ are non-zero. If 
$k_{min}$ is the smallest degree of any monomial in the $y_{ij}$ in $F$, then 
the term of minimal degree in the $\alpha^{(i)}(-j)$ is
\begin{equation}
 \sum_{k=k_{min}} \frac{c_{i_1\ldots i_k ;j_1\ldots j_k}}{j_1\cdots j_k}
\alpha^{(i_1)}(-j_1)\cdots\alpha^{(i_k)}(-j_k).
\end{equation}
 Since the $\alpha^{(i)}(-j)$ are algebraically independent, this sum must equal
$0$; 
but then each $c_{i_1\ldots i_k ;j_1\ldots j_k}=0$ for $k$ minimal as well. 
This contradiction shows that no nontrivial relation $F(\left\lbrace 
y_{ij}\right\rbrace )=0$ exists, so the $y_{ij}$ are algebraically 
independent.
\end{proof}

\begin{rema}
 We can express this basis for $V_{\beta+L,\mathbb{Z}}$ as the elements
\begin{equation}
 y_{i_1 j_1}\cdots y_{i_k j_k}\iota(e_\gamma)
\end{equation}
where $k\geq 0$, $1\leq i_1\leq\ldots\leq i_k\leq l$, $j_m\leq j_{m+1}$ if 
$i_m=i_{m+1}$, and $\gamma\in \beta+L$.
\end{rema}

\section{The conformal vector in an integral form}

Suppose $A$ is an abelian group and $V$ is a strongly $A$-graded conformal
vertex algebra with conformal vector $\omega$ and
central charge $c\in\mathbb{C}$. Then we have the following result on when an
integral form $V_\mathbb{Z}$ of $V$ can contain $\omega$:
\begin{propo}\label{omegainintform1}
 If $V_\mathbb{Z}$ contains $k\omega$ where $k\in\mathbb{C}$, then $k^2 c\in
2\mathbb{Z}$.
\end{propo}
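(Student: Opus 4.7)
The plan is to extract the central charge by applying a specific mode of $Y(k\omega,x)$ to $k\omega$ itself, and then invoke Proposition \ref{spanofvacuum} to force integrality of the scalar that pops out.

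First I would recall the standard identity $\omega_3\omega = \frac{c}{2}\mathbf{1}$. Writing $Y(\omega,x)=\sum_{n\in\mathbb{Z}}L(n)x^{-n-2}$, this is the statement that the coefficient of $x^{-4}$ in $Y(\omega,x)\omega$ equals $L(2)L(-2)\mathbf{1}=\frac{c}{2}\mathbf{1}$, which follows from the Virasoro bracket $[L(2),L(-2)]=4L(0)+\frac{c}{2}\mathrm{id}$ together with $L(0)\mathbf{1}=0=L(2)\mathbf{1}$. Once this identity is in hand, bilinearity of the $n$-th product operation gives
\begin{equation}
(k\omega)_3(k\omega)=k^2\,\omega_3\omega=\frac{k^2 c}{2}\,\mathbf{1}.
\end{equation}

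Now suppose $k\omega\in V_\mathbb{Z}$. Since $V_\mathbb{Z}$ is a vertex subring of $V$, it is closed under all the $n$-th product operations coming from the vertex operator map; in particular it is closed under the operation $(u,v)\mapsto u_3 v$. Hence $\frac{k^2 c}{2}\mathbf{1}\in V_\mathbb{Z}$, which of course lies in $V_\mathbb{Z}\cap\mathbb{C}\mathbf{1}$. By Proposition \ref{spanofvacuum}, this intersection equals $\mathbb{Z}\mathbf{1}$, so $\frac{k^2 c}{2}\in\mathbb{Z}$, i.e.\ $k^2 c\in 2\mathbb{Z}$, as desired.

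There is no real obstacle here: the Virasoro commutator computation is routine, and the only conceptual ingredient is the already-established Proposition \ref{spanofvacuum}. The proof is essentially a two-line calculation plus a citation.
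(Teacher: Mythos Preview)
Your proof is correct and matches the paper's argument essentially line for line: the paper computes $(kL(2))(kL(-2))\mathbf{1}=\frac{k^2 c}{2}\mathbf{1}$ via the Virasoro bracket and then invokes Proposition~\ref{spanofvacuum}, which is exactly your computation of $(k\omega)_3(k\omega)$ written in Virasoro-operator notation rather than $n$-th product notation.
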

\begin{proof}
 If $k\omega\in V_\mathbb{Z}$, then $V_\mathbb{Z}$ must also contain
\begin{equation}
 (kL(2))(k(L(-2))\mathbf{1}=k^2 L(-2)L(2)\mathbf{1}+4 k^2 L(0)\mathbf{1}+k^2
\frac{c(2^3-2)}{12}
\mathbf{1}=\frac{k^2 c}{2}\mathbf{1}.
\end{equation}
By Proposition \ref{spanofvacuum}, we must have $k^2 c\in 2\mathbb{Z}$.
\end{proof}
In particular, the central charge of $V$ must be an even integer if $\omega$ is
in any integral form of $V$. Now we prove a partial converse to Proposition
\ref{omegainintform1}. Recall that $v\in V$ is called a lowest weight
vector for the Virasoro algebra if it is an $L(0)$-eigenvector and $L(n)v=0$ for
$n> 0$.
\begin{theo}\label{omegatheorem}
 Suppose $V_\mathbb{Z}$ is an integral form of $V$ generated by doubly
homogeneous lowest weight
vectors $\lbrace v^{(j)}\rbrace$ for the Virasoro algebra. If $k\in\mathbb{Z}$
is such that $k^2 c\in 2\mathbb{Z}$ and $k\omega\in
V_\mathbb{Q}=\mathbb{Q}\otimes_\mathbb{Z} V_\mathbb{Z}$, then $V_\mathbb{Z}$ can
be extended to an
integral form of $V$ containing $k\omega$.
\end{theo}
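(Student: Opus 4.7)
I would take $V'_\mathbb{Z}$ to be the vertex subring of $V$ generated by $V_\mathbb{Z}\cup\{k\omega\}$, and argue that this is an integral form of $V$ extending $V_\mathbb{Z}$. Three of the required properties are more or less immediate. First, $V'_\mathbb{Z}$ contains $V_\mathbb{Z}$ and $k\omega$ by construction, so it still $\mathbb{C}$-spans $V$. Second, the $\mathbb{Q}$-subspace $V_\mathbb{Q}=\mathbb{Q}\otimes_\mathbb{Z}V_\mathbb{Z}$, obtained from $V_\mathbb{Z}$ by extension of scalars, is a vertex subalgebra of $V$ over $\mathbb{Q}$; it contains $V_\mathbb{Z}$ and, by hypothesis, $k\omega$, hence $V'_\mathbb{Z}\subseteq V_\mathbb{Q}$, and in particular $V'_\mathbb{Z}$ is torsion free. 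Third, since the generators $\{v^{(j)}\}\cup\{k\omega\}$ are all doubly homogeneous (using $\omega\in V^0_2$), Proposition \ref{zgen} expresses $V'_\mathbb{Z}$ as the $\mathbb{Z}$-span of doubly homogeneous coefficients, giving compatibility with the $A\times\mathbb{Z}$-grading. What remains is to show that each graded piece $V'_\mathbb{Z}\cap V^\alpha_n$ is finitely generated as a $\mathbb{Z}$-module: being a finitely generated torsion-free subgroup of the finite-dimensional space $V_\mathbb{Q}\cap V^\alpha_n$, it would then be free of rank at most $\dim V^\alpha_n$, and it contains $V_\mathbb{Z}\cap V^\alpha_n$ of rank exactly $\dim V^\alpha_n$, so the rank would be exact and $V'_\mathbb{Z}$ an integral form.

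To control denominators I would exploit the lowest-weight assumption on $\{v^{(j)}\}$ together with the hypothesis on $c$. Since the $v^{(j)}$ are Virasoro primary of integer conformal weight $h_j$, the standard commutator for primary vectors gives the integer-coefficient identity
\begin{equation*}
[kL(m),v^{(j)}_n]=k((h_j-1)(m+1)-n)\,v^{(j)}_{m+n},
\end{equation*}
whose right-hand side is a $\mathbb{Z}$-coefficient mode of a generator and therefore preserves $V_\mathbb{Z}$. Likewise, the Virasoro bracket
\begin{equation*}
[kL(m),kL(n)]=k(m-n)\,kL(m+n)+\tfrac{k^2 c}{12}(m^3-m)\,\delta_{m+n,0}
\end{equation*}
has $\mathbb{Z}$-coefficients: the central scalar is $(k^2c/2)\cdot(m^3-m)/6$, which lies in $\mathbb{Z}$ by the hypothesis $k^2c\in 2\mathbb{Z}$ and the fact that $m^3-m$ is always divisible by $6$. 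Using these two identities inductively, any spanning monomial $a_1\cdots a_r\mathbf{1}$ from Proposition \ref{zgen} (each $a_i$ a mode of some $v^{(j)}$ or of $k\omega$) can be rewritten as a $\mathbb{Z}$-linear combination of monomials in which every $kL$-factor stands to the left of every $v^{(j)}$-factor. Collecting the $v^{(j)}$-modes applied to $\mathbf{1}$ into a vector $w\in V_\mathbb{Z}$ (by Proposition \ref{zgen} applied to $V_\mathbb{Z}$), and using $L(m)\mathbf{1}=0$ for $m\geq-1$ together with the first identity to push any $kL(m)$ with $m\geq-1$ through $w$ and annihilate it on $\mathbf{1}$, I may further restrict to terms of the form $kL(-m_1)\cdots kL(-m_s)\,w$ with $m_i\geq 2$ and $w\in V_\mathbb{Z}$. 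In fixed double degree $(\alpha,n)$ the weight identity $n=\mathrm{wt}(w)+\sum m_i$ leaves only finitely many admissible tuples $(s,m_1,\ldots,m_s)$, and for each of these $w$ ranges over the finitely generated group $V_\mathbb{Z}\cap V^\alpha_{n-\sum m_i}$; hence $V'_\mathbb{Z}\cap V^\alpha_n$ is finitely generated.

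The main obstacle is making this reordering step rigorous: the vector $w\in V_\mathbb{Z}$ one arrives at is a word in the generator modes $v^{(j)}_n$ rather than a single primary vector, so commuting a $kL(m)$ with $m\geq-1$ past $w$ cannot be done in one shot and must instead be carried out generator-mode by generator-mode inside a double induction on total word length and on the number of $kL$-factors present. The two integer-commutator identities displayed above are precisely what guarantee that every intermediate coefficient stays in $\mathbb{Z}$ throughout this induction, and hence that the reduction to the normal form $kL(-m_1)\cdots kL(-m_s)\,w$ really takes place inside $V'_\mathbb{Z}$ rather than some larger rational extension.
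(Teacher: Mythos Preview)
Your argument is correct and follows essentially the same route as the paper's proof: define the extension as the vertex subring generated by $V_\mathbb{Z}$ and $k\omega$, reduce to finite generation of each doubly graded piece, and obtain this by using the integer-coefficient commutators $[kL(m),v^{(j)}_n]$ and $[kL(m),kL(n)]$ to put every spanning monomial into the normal form $kL(-m_1)\cdots kL(-m_s)\,w$ with $m_i\geq 2$ and $w\in V_\mathbb{Z}$. The paper packages the first commutator computation as a separate lemma (and the consequence that $L(m)$ preserves $V_\mathbb{Z}$ for $m\geq -1$ as a corollary), but the logic and the key observations are the same as yours.
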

\begin{proof}
 We shall show that the vertex subring $V^*_\mathbb{Z}$ of $V$ generated by
$V_\mathbb{Z}$ and $k\omega$ is an integral form of $V$. By Proposition
\ref{zgen}, $V_\mathbb{Z}^*$ is spanned over $\mathbb{Z}$ by coefficients of
products as in (\ref{zspan}) where the $u_i$ are either $v^{(j)}$ or $k\omega $.
Since the $v_j$ and $k\omega$ are homogeneous in the
$A\times\mathbb{Z}$-gradation of $V$,
\begin{equation}
 V_\mathbb{Z}^*=\coprod_{\alpha\in A,\,n\in\mathbb{Z}} V_\mathbb{Z}^*\cap
V^\alpha_n.
\end{equation}
Since $k\omega\in V_{\mathbb{Q}}$ and $V_{\mathbb{Z}}^*$ contains 
$V_{\mathbb{Z}}$, which spans $V^{\alpha}_n$, it is enough to show that 
$V_{\mathbb{Z}}^*\cap V^{\alpha}_n$ is finitely generated as an abelian group, 
just as in the proof of Theorem \ref{lattform}.

\begin{lemma} \label{omegalemma}
For any $m,n\in\mathbb{Z}$ and lowest weight vector $v$, $\left[ 
L(m),v_n\right] $ is an integral linear combination of operators 
$v_k$ for $k\in\mathbb{Z}$.
\end{lemma}
\begin{proof}
By the commutator formula,
\begin{eqnarray}
 [ Y(\omega,  x_1)  ,  Y(  v,x_2)] & = &
\mathrm{Res}_{x_0}\,x_2^{-1}\delta\left( \frac{x_1-x_0}{x_2}\right)
Y(Y(\omega,x_0)v,x_2)\nonumber\\
 & = & \mathrm{Res}_{x_0}\,e^{-x_0\,\partial /\partial x_1}\left(
x_2^{-1}\delta\left( \frac{x_1}{x_2}\right)\right) \sum_{n\in\mathbb{Z}}
Y(L(n)v,x_2)x_0^{-n-2}\nonumber\\
& = & \sum_{n\geq -1} (-1)^{n+1} \left( \dfrac{\partial}{\partial
x_1}\right)^{n+1} \left( x_2^{-1}\delta\left( \frac{x_1}{x_2}\right)\right)
Y(L(n)v,x_2).\\\nonumber
\end{eqnarray}
Since $v$ is a lowest weight vector, and using the $L(-1)$-derivative property,
\begin{eqnarray}
 [ Y(\omega,  x_1)  ,  Y(  v,x_2)] & = & x_2^{-1}\delta\left(
\frac{x_1}{x_2}\right) \dfrac{d}{dx_2} Y(v,x_2)\nonumber\\
& & -(\mathrm{wt}\; v)\dfrac{\partial}{\partial x_1}\left( x_2^{-1}\delta\left(
\frac{x_1}{x_2}\right) \right)     Y(v,x_2) \\\nonumber
\end{eqnarray}
Since $\mathrm{wt}\;v\in\mathbb{Z}$ and all the 
coefficients in the delta function expressions are integers, we see that 
$[L(m),v_n]$ is an integral combination of operators $v_k$.
\end{proof}
\begin{corol} \label{omegacorol}
 For $m\geq -1$, $L(m)$ leaves $V_{\mathbb{Z}}$ invariant.
\end{corol}
\begin{proof}
By Lemma \ref{omegalemma}, an expression of the form 
$L(m)v^{(j_1)}_{n_1}\cdots v^{(j_k)}_{n_k}\mathbf{1}$ equals 
$v^{(j_1)}_{n_1}\cdots v^{(j_k)}_{n_k}L(m)\mathbf{1}$ plus
an 
integral linear combination of terms of the form 
$v^{(j_1)}_{m_1}\cdots v^{(j_k)}_{m_k}\mathbf{1}$. But since $m\geq 
-1$, $L(m)\mathbf{1}=0$, so we see that $L(m) V_{\mathbb{Z}}\subseteq 
V_{\mathbb{Z}}$.
\end{proof}
Continuing with the proof of Theorem \ref{omegatheorem}, we can use Lemma
\ref{omegalemma} to rewrite 
any product of operators of the form $u^{(1)}_{m_1}\cdots u^{(k)}_{m_k}$, 
where the $u^{(i)}$ equal either $v^{(j)}$ or $k\omega$, as an integral 
combination of such products in which all operators of the form $kL(m)$ 
appear on the left. That is, $V_{\mathbb{Z}}^*$ is the integral span of 
products of the form
\begin{equation}
 (kL(m_1))\cdots
(kL(m_j))v^{(j_1)}_{n_1}\cdots v^{(j_k)}_{n_k}\mathbf{1},
\end{equation}
where $m_i,n_i\in\mathbb{Z}$. We can now use the Virasoro algebra relations,
\begin{equation}\label{kvir}
  \left[ kL(m),kL(n)\right] =k(m-n)(kL(m+n))+
\frac{k^2 c(m^3-m)}{12} 1_{V},
\end{equation}
to rewrite $(kL(m_1))\cdots (kL(m_j))$ as an integral combination of such 
products for which $m_1\leq\ldots\leq m_j$.  (Note that since $k^2c\in 
2\mathbb{Z}$ and $\frac{m^3-m}{6}=\binom{m+1}{3}$,
$\frac{k^2 c(m^3-m)}{12}$ is always an integer.)

Thus, using Corollary \ref{omegacorol}, we see that $V_{\mathbb{Z}}^*$ is the
integral span of products of the form
\begin{equation}
(kL(-m_1))\cdots (kL(-m_j))v
\end{equation}
where $m_i\geq 2$ and $v\in V_{\mathbb{Z}}$. To see that $V_{\mathbb{Z}}^*\cap 
V^{\alpha}_n$ is finitely generated, suppose $v_1,\ldots v_l\in 
V_{\mathbb{Z}}$ span $V_{\mathbb{Z}}\cap\left( \coprod_{i=N_\alpha}^n
V^{\alpha}_i\right)$, where $V^\alpha_m =0$ for $m<N_\alpha$. Then 
$V_{\mathbb{Z}}^*\cap V^{\alpha}_n$ is spanned by some of the vectors of the 
form 
$(kL(-m_1))\cdots (kL(-m_j))v_k$ where $m_i\geq 2$ and $m_1+\ldots +m_j\leq 
n-N_\alpha$. Since there are finitely many 
such vectors, $V_{\mathbb{Z}}^*\cap V^{\alpha}_n$ is finitely generated as an 
abelian group.
\end{proof}
\begin{propo}\label{omegaprop}
 Suppose $V_\mathbb{Z}$ is the integral form $V_{L,\mathbb{Z}}$ of a lattice
vertex operator algebra 
$V_L$ or the integral form $V_{\widehat{\mathfrak{g}}}(\ell,0)_\mathbb{Z}$ or 
$L_{\widehat{\mathfrak{g}}}(\ell,0)_\mathbb{Z}$
of an affine Lie algebra vertex operator algebra
 associated to a finite dimensional simple
Lie algebra $\mathfrak{g}$, with $\ell\in\mathbb{Z}$. Then $\omega\in 
V_{\mathbb{Q}}$ and $V_\mathbb{Z}$ is generated by lowest weight vectors for the 
Virasoro algebra, so $V_\mathbb{Z}$ may be extended to an integral form 
$V_\mathbb{Z}^*$ containing $k\omega$ for any $k\in\mathbb{Z}$ such that 
$k^2c\in2\mathbb{Z}$.
\end{propo}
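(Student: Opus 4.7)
The plan is to reduce the proposition to Theorem \ref{omegatheorem}, whose hypotheses are precisely that $\omega \in V_{\mathbb{Q}}$ and that $V_{\mathbb{Z}}$ is generated as a vertex subring by doubly homogeneous Virasoro lowest weight vectors. Both hypotheses will be verified case by case, using the explicit generating sets already recorded: $\iota(e_\alpha)$ for $\alpha \in L$ in Definition \ref{lattdef}; $a(-1)\mathbf{1}$ for $a \in \mathfrak{g}_{\mathbb{Z}}$ in Proposition \ref{basicaffint}; and the divided powers $\frac{x_\alpha(-1)^k}{k!}\mathbf{1}$ in Theorem \ref{affalgzform}.

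For $\omega \in V_{\mathbb{Q}}$ in the lattice case, I would fix a $\mathbb{Z}$-basis $\{\alpha_i\}$ of $L$ so that the dual basis $\{\alpha_i'\}$ lies in $L \otimes \mathbb{Q}$ and $\omega = \frac{1}{2}\sum_i \alpha_i(-1)\alpha_i'(-1)\mathbf{1}$. First I would observe that each $\alpha(-1)\mathbf{1}$ with $\alpha \in L$ lies in $V_{L,\mathbb{Z}}$ by extracting it as a coefficient of $Y(\iota(e_\alpha), x)\iota(e_{-\alpha}) = \pm x^{-\langle\alpha,\alpha\rangle} E^-(-\alpha, x)\mathbf{1}$; then I would identify $\alpha(-1)\beta(-1)\mathbf{1}$ with the coefficient of $x^0$ in $Y(\alpha(-1)\mathbf{1}, x)\beta(-1)\mathbf{1}$, which places each Sugawara summand in $V_{L,\mathbb{Q}}$ after extending scalars. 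For the affine cases, I would write $\omega = \frac{1}{2(\ell+h)}\sum_i u_i(-1)u_i'(-1)\mathbf{1}$ with $\{u_i\}$ a Chevalley basis of $\mathfrak{g}_{\mathbb{Z}}$ (so $\{u_i'\} \subseteq \mathfrak{g}_{\mathbb{Q}}$) and invoke Proposition \ref{basicaffint} to place each $u_i(-1)u_i'(-1)\mathbf{1}$ in $V_{\mathbb{Q}}$; rationality of the prefactor is clear since $\ell \in \mathbb{Z}$ and $\ell \neq -h$.

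For the lowest-weight condition, all three generating families are manifestly doubly homogeneous, so it remains to check that $L(n)\cdot(\text{generator}) = 0$ for every $n > 0$. For $\iota(e_\alpha)$ this follows from $\alpha_i(m)\iota(e_\alpha) = 0$ for $m > 0$ applied to the Sugawara modes. For $a(-1)\mathbf{1}$, the primary commutator $[L(n), a(-1)] = a(n-1)$, together with $L(n)\mathbf{1} = 0$ and $a(n-1)\mathbf{1} = 0$ for $n \geq 1$, yields the vanishing. For the divided powers $\frac{x_\alpha(-1)^k}{k!}\mathbf{1}$, I would induct on $k$, using the same primary commutator together with the crucial abelianness $[x_\alpha(m), x_\alpha(n)] = 0$ (from $\langle x_\alpha, x_\alpha\rangle = 0$ for a root $\alpha$) to commute $x_\alpha(n-1)$ through the remaining $x_\alpha(-1)$'s and onto the vacuum.

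The main technical obstacle is the divided-power case, since its verification cannot be reduced to a single generator and requires the inductive argument leveraging simultaneous commutativity of all $x_\alpha$-modes; the other cases reduce to a direct computation. Once all three cases are handled, the hypotheses of Theorem \ref{omegatheorem} are met and it applies verbatim to produce the extended integral form $V^*_{\mathbb{Z}}$ containing $k\omega$.
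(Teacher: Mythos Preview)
Your proposal is correct and follows essentially the same approach as the paper: verify $\omega\in V_{\mathbb{Q}}$ via the explicit Sugawara formula and rationality of the dual bases, check that the standard generators are Virasoro lowest weight vectors, and then invoke Theorem \ref{omegatheorem}. The paper's proof is slightly terser---it asserts without computation that $\iota(e_\alpha)$ is lowest weight, and for the divided powers it writes the identity $L(m)\frac{x_\alpha(-1)^k}{k!}\mathbf{1}=\frac{x_\alpha(-1)^{k-1}}{(k-1)!}x_\alpha(m-1)\mathbf{1}=0$ directly rather than framing it as an induction---but the underlying arguments are the same, including the key use of $[x_\alpha(m),x_\alpha(n)]=0$. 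Your separate treatment of the generators $a(-1)\mathbf{1}$ from Proposition \ref{basicaffint} is harmless but unnecessary here, since the proposition concerns the Garland-type integral form of Theorem \ref{affalgzform}, whose generating set already covers this case at $k=1$.
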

\begin{proof}
If $V=V_L$ and $\lbrace\alpha_1,\ldots,\alpha_l\rbrace$ is a base for $L$ with 
dual base $\lbrace\alpha_1',\ldots,\alpha_l'\rbrace$ for $L^\circ$, then 
 \begin{equation}
 \omega =\frac{1}{2}\sum_{i=1}^{l}
\alpha_i(-1)\alpha_i'(-1)\mathbf{1}.
\end{equation}
 Since $\langle\cdot,\cdot\rangle $ is integral on $L$, $\alpha_i'\in 
\mathbb{Q}\otimes_\mathbb{Z} L$ for any $1\leq i\leq l$, so that $\omega\in 
V_\mathbb{Q}$. Moreover, $V_\mathbb{Z}$ is generated by the lowest weight 
vectors $\iota(e_\alpha)$ for $\alpha\in L$.
 
 If $V$ is an affine Lie algebra vertex operator algebra  
$V_{\widehat{\mathfrak{g}}}(\ell,0)_\mathbb{Z}$ or 
$L_{\widehat{\mathfrak{g}}}(\ell,0)_\mathbb{Z}$ and $\lbrace u_i\rbrace $ is a 
Chevalley basis for $\mathfrak{g}_\mathbb{Z}$ with dual basis $\lbrace 
u_i'\rbrace$ with respect to the form $\langle\cdot,\cdot\rangle$ on 
$\mathfrak{g}$, then
  \begin{equation}
 \omega=\frac{1}{2(\ell+h)}\sum_{i=1}^{\mathrm{dim}\,\mathfrak{g}}
u_i(-1)u_i'(-1)\mathbf{1}
\end{equation}
where $h$ is the dual Coxeter number of $\mathfrak{g}$. Since 
$\ell,h\in\mathbb{Z}$ and $\langle\cdot,\cdot\rangle$ is integral on 
$\mathfrak{g}_\mathbb{Z}$ so that 
$u_i'\in\mathbb{Q}\otimes_\mathbb{Z}\mathfrak{g}_\mathbb{Z}$, we have $\omega\in 
V_\mathbb{Q}$. Moreover, $V_\mathbb{Z}$ is generated by the vectors 
$\frac{x_{\alpha}(-1)^k}{k!}\mathbf{1}$ for $\alpha$ a root, $x_\alpha$ the 
corresponding root vector in the Chevalley basis of $\mathfrak{g}$, and $k\geq 
0$. The commutation relations
\begin{equation}
 [L(m), x_\alpha(-1)]=x_{\alpha}(m-1)
\end{equation}
for any $m\in\mathbb{Z}$ (see for example \cite{LL} Section 6.2), and the fact 
that $x_{\alpha}(m-1)$ commutes with $x_\alpha(-1)$, imply that for $m>0$ and 
$k\geq 0$,
\begin{equation}
L(m)\dfrac{x_\alpha(-1)^k}{k!}\mathbf{1}=\dfrac{x_{\alpha}(-1)^{k-1}}{(k-1)!}
x_\alpha(m-1)\mathbf{1}=0.
\end{equation}
Since $\frac{x_\alpha(-1)^k}{k!}\mathbf{1}$ is homogeneous of conformal weight 
$k$, this means $V_\mathbb{Z}$ is generated by lowest weight vectors for the 
Virasoro algebra.
\end{proof}

\begin{rema}
 Theorem \ref{omegatheorem} and Proposition \ref{omegaprop} generalize the 
observation made in \cite{B} that if $L$ is an
even lattice, $V_{L,\mathbb{Z}}$ can be extended to an integral form of $V_L$
containing $\omega$ if the rank of $L$ is even, and containing $2\omega$ if the
rank of $L$ is odd.
\end{rema}

Using the commutation relations (\ref{kvir}), we can use an argument similar to 
but simpler than the proof of Theorem \ref{omegatheorem} to prove:
\begin{propo}
 If $V$ is a vertex operator algebra generated by the conformal vector $\omega$, 
and $\omega$ is contained in a rational form of $V$, 
then $V$ has an integral form generated by $k\omega$ if $k\in\mathbb{Z}$ and 
$k^2 c\in 2\mathbb{Z}$. In particular, $\omega$ generates an integral form of 
$V$ 
if and only if $c\in 2\mathbb{Z}$.
\end{propo}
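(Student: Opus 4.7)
The plan is to mimic the proof of Theorem \ref{omegatheorem} but with the generating set reduced to the single element $k\omega$, exploiting the fact that $V$ itself is generated as a vertex algebra by $\omega$. Set $V^*_{\mathbb{Z}} = \langle k\omega\rangle_{\mathbb{Z}}$. By Proposition \ref{zgen}, $V^*_{\mathbb{Z}}$ is the $\mathbb{Z}$-span of coefficients of products $Y(k\omega, x_1)\cdots Y(k\omega, x_n)\mathbf{1}$, which are integer combinations of vectors of the form $(kL(m_1))\cdots (kL(m_n))\mathbf{1}$.

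The first step will be to apply the rescaled Virasoro commutation relation (\ref{kvir}) to reorder the factors so that $m_1 \leq \cdots \leq m_n$; here the hypothesis $k^2 c \in 2\mathbb{Z}$ ensures that the central term $\frac{k^2 c(m^3-m)}{12}$ is always an integer (since $(m^3-m)/6 = \binom{m+1}{3}$), so the reordering preserves the $\mathbb{Z}$-span. Since $L(m)\mathbf{1} = 0$ for $m \geq -1$, this reduction shows that $V^*_{\mathbb{Z}}$ is generated over $\mathbb{Z}$ by vectors $(kL(-m_1))\cdots (kL(-m_j))\mathbf{1}$ with all $m_i \geq 2$. Such a vector has conformal weight $m_1 + \cdots + m_j$, so compatibility with the weight grading forces $V^*_{\mathbb{Z}} \cap V_n$ to be finitely generated for each $n \in \mathbb{Z}$.

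It remains to establish that $V^*_{\mathbb{Z}}$ is an integral form. Spanning is automatic: since $V$ is generated as a vertex algebra by $\omega$, it is spanned by the $L(m_1)\cdots L(m_n)\mathbf{1}$, and rescaling by powers of $k$ shows $V^*_{\mathbb{Z}}$ spans $V$ over $\mathbb{C}$. The hypothesis that $\omega$ lies in some rational form $V_{\mathbb{Q}}$ of $V$ implies $V^*_{\mathbb{Z}} \subseteq V_{\mathbb{Q}}$, so any $\mathbb{Z}$-linearly independent set in $V^*_{\mathbb{Z}}$ remains $\mathbb{Q}$-linearly independent by clearing denominators, and hence $\mathbb{C}$-linearly independent because $V_{\mathbb{Q}}$ is a $\mathbb{Q}$-form of $V$. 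Combined with the finite generation in each weight space, this makes $V^*_{\mathbb{Z}}$ an integral form. The ``in particular'' claim is then immediate: taking $k = 1$ turns the condition $k^2 c \in 2\mathbb{Z}$ into $c \in 2\mathbb{Z}$, giving one direction, while the converse is Proposition \ref{omegainintform1} with $k = 1$. The only delicate point, exactly as in Theorem \ref{omegatheorem}, is verifying integrality of the Virasoro central term under the hypothesis $k^2 c \in 2\mathbb{Z}$; but the single-generator setup here is otherwise lighter than in that theorem, since no separate lowest-weight generating set or $L(m)$-invariance lemma is required.
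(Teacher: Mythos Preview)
Your proof is correct and is precisely the argument the paper has in mind: the paper itself gives no detailed proof but merely says that one uses the commutation relations (\ref{kvir}) in an argument ``similar to but simpler than the proof of Theorem \ref{omegatheorem},'' and what you have written is exactly that simplification. In particular, your observation that the single-generator case bypasses Lemma \ref{omegalemma} and Corollary \ref{omegacorol} is the intended simplification.
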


If $L$ is an even lattice, then $V_{L,\mathbb{Z}}$ may already contain $\omega$.
Recall that if $\left\lbrace \alpha_1,\ldots 
,\alpha_l\right\rbrace $ is a base for $L$ and $\left\lbrace 
\alpha_1',\ldots ,\alpha_l'\right\rbrace $ is the corresponding dual basis with
respect to 
$\left\langle \cdot ,\cdot\right\rangle $, then
\begin{equation}
 \omega =\frac{1}{2}\sum_{i=1}^l \alpha_i(-1)\alpha_i'(-1)\mathbf{1}\in V^0_2,
\end{equation} 
and the central charge of $V_L$ is $l$, the rank of $L$. The ``if'' direction of the following proposition was observed in \cite{BR1}:
\begin{propo}
 If $L$ is an even lattice, the integral form $V_{L,\mathbb{Z}}$ of $V_L$ 
contains $\omega$ if and only if $L$ is self-dual.
\end{propo}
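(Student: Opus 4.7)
The plan is to use the explicit $\mathbb{Z}$-basis of $V_{L,\mathbb{Z}}$ provided by Proposition \ref{lattbasisprop} to extract necessary and sufficient integrality conditions for $\omega$ to lie in $V_{L,\mathbb{Z}}$. Fix the base $\lbrace\alpha_1,\ldots,\alpha_l\rbrace$ of $L$, let $g_{ij}=\langle\alpha_i,\alpha_j\rangle$, and let $g^{ij}$ denote the entries of the inverse Gram matrix, so that the dual base is $\alpha_i'=\sum_j g^{ij}\alpha_j$ and $\langle\alpha_i',\alpha_j'\rangle=g^{ij}$.

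First I would identify the basis of $V^0_2\cap V_{L,\mathbb{Z}}$. Expanding $E^-(-\alpha_i,x)=\exp\bigl(\sum_{n>0}\alpha_i(-n)x^n/n\bigr)$ gives $y_{i,1}=\alpha_i(-1)$ and $y_{i,2}=\tfrac{1}{2}\alpha_i(-2)+\tfrac{1}{2}\alpha_i(-1)^2$, so by Proposition \ref{lattbasisprop} a $\mathbb{Z}$-basis of $V^0_2\cap V_{L,\mathbb{Z}}$ is $\lbrace \alpha_i(-1)\alpha_j(-1)\mathbf{1}\mid i\leq j\rbrace\cup\lbrace \tfrac{1}{2}(\alpha_i(-2)+\alpha_i(-1)^2)\mathbf{1}\mid 1\leq i\leq l\rbrace$. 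Since the operators $\alpha_i(-1)$ mutually commute, I can rewrite $\omega=\tfrac{1}{2}\sum_i g^{ii}\alpha_i(-1)^2\mathbf{1}+\sum_{i<j}g^{ij}\alpha_i(-1)\alpha_j(-1)\mathbf{1}$, and observe that no $\alpha_i(-2)\mathbf{1}$ terms appear. Comparing with any expansion $\omega=\sum_i a_i\,y_{i,2}\mathbf{1}+\sum_{i\leq j}b_{ij}\,y_{i,1}y_{j,1}\mathbf{1}$, the algebraic independence of the $\alpha_i(-n)$ in $S(\widehat{\mathfrak{h}}_-)$ forces $a_i=0$, $b_{ii}=g^{ii}/2$, and $b_{ij}=g^{ij}$ for $i<j$. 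Therefore $\omega\in V_{L,\mathbb{Z}}$ if and only if $g^{ij}\in\mathbb{Z}$ for all $i<j$ and $g^{ii}\in 2\mathbb{Z}$ for all $i$.

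Finally I would convert these numerical conditions into self-duality of $L$. For the ``if'' direction, if $L=L^\circ$ then each $\alpha_i'$ lies in $L$, so $g^{ij}=\langle\alpha_i',\alpha_j'\rangle\in\mathbb{Z}$ by integrality of $L$ and $g^{ii}=\langle\alpha_i',\alpha_i'\rangle\in 2\mathbb{Z}$ by evenness of $L$, giving $\omega\in V_{L,\mathbb{Z}}$. Conversely, if all $g^{ij}\in\mathbb{Z}$ then $\alpha_i'=\sum_j g^{ij}\alpha_j\in L$ for each $i$; since $\lbrace\alpha_1',\ldots,\alpha_l'\rbrace$ is a base of $L^\circ$, this forces $L^\circ\subseteq L$, and combined with $L\subseteq L^\circ$ we conclude $L=L^\circ$.

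The only subtle step is the uniqueness of the expansion of $\omega$ in the basis — specifically, one must rule out that a combination of $y_{i,2}\mathbf{1}$ basis elements could absorb a non-integral $\tfrac{1}{2}g^{ii}$ coefficient. This is handled by the fact that each $y_{i,2}\mathbf{1}$ is the unique basis element containing $\alpha_i(-2)\mathbf{1}$, so any such $y_{i,2}\mathbf{1}$ contribution would introduce $\alpha_i(-2)\mathbf{1}$ terms absent from $\omega$; algebraic independence in $S(\widehat{\mathfrak{h}}_-)$ then forces all $a_i$ to vanish.
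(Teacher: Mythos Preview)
Your proof is correct and follows essentially the same approach as the paper: identify the $\mathbb{Z}$-basis of $V^0_2\cap V_{L,\mathbb{Z}}$ via Proposition~\ref{lattbasisprop}, expand $\omega$ in that basis, and read off the integrality conditions $g^{ii}\in 2\mathbb{Z}$, $g^{ij}\in\mathbb{Z}$, which are then equated with self-duality. The only cosmetic differences are that the paper writes $c_{ij}$ for your $g^{ij}$ and derives the symmetry $c_{ij}=c_{ji}$ from $\langle\alpha_i',\alpha_j'\rangle$ rather than invoking it immediately; your treatment of the ``subtle step'' (ruling out $y_{i,2}$ contributions via the absence of $\alpha_i(-2)\mathbf{1}$ terms) is actually more explicit than the paper's, which leaves this implicit.
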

\begin{proof}
Suppose  $\left\lbrace \alpha_1,\ldots ,\alpha_l\right\rbrace $ is a base for 
$L$. We know from Proposition \ref{lattbasisprop} that an integral basis for 
$V_{L,\mathbb{Z}}\cap V^0_2$ consists of distinct coefficients of monomials of 
degree $2$ in products of the form
\begin{equation}\label{vzerotwo}
 E^-(-\alpha_i,x_1)E^-(-\alpha_j,x_2)\mathbf{1} 
\end{equation}
where $1\leq i,j\leq l$. Since
\begin{eqnarray}
 E^-(-\alpha,x) & = & \mathrm{exp}\left( \sum_{n>0} \frac{\alpha(-n)}{n}
x^{n}\right) \nonumber\\
& = & 1+\alpha(-1)x+\left( \frac{\alpha(-2)+\alpha(-1)^2}{2} \right)x^2+\ldots
,\\\nonumber
\end{eqnarray}
the distinct coefficients of monomials of degree $2$ in (\ref{vzerotwo}) are
\begin{equation}\label{vzerotwobasis}
 \alpha_i(-1)\alpha_j(-1)\mathbf{1},\,\,\,\,\frac{\alpha_i(-2)+\alpha_i(-1)^2}
{2}\mathbf{1}
\end{equation}
where $1\leq i\leq j\leq l$. We can take these quadratic polynomials as a base 
for 
$V_{L,\mathbb{Z}}\cap V^0_2$.

Now suppose $\left\lbrace \alpha_1',\ldots ,\alpha_l'\right\rbrace $ is a 
basis of $\mathfrak{h}$ dual to $\left\lbrace \alpha_1,\ldots 
,\alpha_l\right\rbrace $, and write $\alpha_i'=\sum_{j=1}^l c_{ji}\alpha_j$ 
where $c_{ji}\in\mathbb{C}$. Then
\begin{eqnarray}
 \omega & = & \frac{1}{2}\sum_{i=1}^l
\alpha_i(-1)\alpha_i'(-1)\mathbf{1}=\frac{1}{2}\sum_{i,j=1}^l
c_{ji}\alpha_i(-1)\alpha_j(-1)\mathbf{1}\nonumber\\
& = & \sum_{i=1}^l \frac{c_{ii}}{2}\alpha_i(-1)^2\mathbf{1}+\sum_{i<j}
\frac{c_{ij}+c_{ji}}{2}\alpha_i(-1)\alpha_j(-1)\mathbf{1}.\\\nonumber
\end{eqnarray}
In view of the base (\ref{vzerotwobasis}) for $V_{L,\mathbb{Z}}\cap V^0_2$, we
see that 
$\omega\in V_{L,\mathbb{Z}}$ if and only if $c_{ii},c_{ij}+c_{ji}\in 
2\mathbb{Z}$ for all $i$ and $j\neq i$.

Since $\left\lbrace \alpha_1',\ldots ,\alpha_l'\right\rbrace $ is a dual 
basis,
\begin{equation}
 \left\langle \alpha_i',\alpha_j'\right\rangle =\left\langle
\alpha_i',\sum_{k=1}^l c_{kj}\alpha_k\right\rangle =c_{ij}.
\end{equation}
Since $\langle\cdot,\cdot\rangle$ is symmetric, we have $c_{ij}=c_{ji}$. 
Consequently, 
$\omega\in V_{\mathbb{Z}}$ if and only if $c_{ii}\in 2\mathbb{Z}$ for all $i$ 
and $c_{ij}\in\mathbb{Z}$ for $i\neq j$. If $L$ is self-dual, each 
$\alpha_i'\in L$, so each $c_{ij}\in\mathbb{Z}$; also, since $L$ is even, 
$c_{ii}=\left\langle \alpha_i',\alpha_i'\right\rangle\in 2\mathbb{Z}$ for 
each $i$. Conversely, if each $c_{ij}\in\mathbb{Z}$, then each $\alpha_i'\in 
L$, so $L$ is self-dual. Thus we see that $\omega\in V_{L,\mathbb{Z}}$ if and 
only if $L$ is self-dual.
\end{proof}

\begin{exam}
 If $L$ is the root lattice of $E_8$ or the Leech lattice, then $\omega\in 
V_{L,\mathbb{Z}}$.
\end{exam}

\section{Integral forms in contragredient modules}

The following results generalize Lemma 6.1, Lemma 6.2, and
Remark 6.3 in \cite{DG} to the context of modules and contragredient modules 
for 
a vertex operator algebra $V$. Also, we apply these results to the affine Lie 
algebra vertex operator algebra integral forms 
$V_{\widehat{\mathfrak{g}}}(\ell,0)_\mathbb{Z}$ and 
$L_{\widehat{\mathfrak{g}}}(\ell,0)$, cases not considered 
in \cite{DG}. Suppose $V$ has an integral form 
$V_{\mathbb{Z}}$ and $W$ is a $V$-module with integral form $W_{\mathbb{Z}}$. 
Then there is an integral form $W'_{\mathbb{Z}}$ in the contragredient module 
$W'$ given by
\begin{equation}
 W'_{\mathbb{Z}}=\left\lbrace w'\in W'\mid\left\langle
w',w\right\rangle\in\mathbb{Z}\;\mathrm{for}\,w\in W_{\mathbb{Z}} \right\rbrace
.
\end{equation}
We would like $W'_\mathbb{Z}$ to be a module for $V_\mathbb{Z}$.
\begin{propo}
 Suppose $V_\mathbb{Z}$ is invariant under $\frac{L(1)^n}{n!}$ for $n\geq 0$.
Then $W'_\mathbb{Z}$ is invariant under the action of $V_\mathbb{Z}$.
\end{propo}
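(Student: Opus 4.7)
The plan is to unpack the defining formula for the contragredient module action and verify that it pairs integrally with $W_{\mathbb{Z}}$. For $v\in V$, $w'\in W'$, and $w\in W$, one has
\begin{equation*}
\langle Y'(v,x) w', w \rangle = \langle w', Y(e^{xL(1)}(-x^{-2})^{L(0)} v, x^{-1}) w \rangle,
\end{equation*}
so to show that every $x$-coefficient of $Y'(v,x)w'$ lies in $W'_{\mathbb{Z}}$ for $v\in V_{\mathbb{Z}}$ and $w'\in W'_{\mathbb{Z}}$, it suffices to check that for every $w\in W_{\mathbb{Z}}$ each coefficient in $x$ on the right-hand side is an integer.

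Using the compatibility of $V_{\mathbb{Z}}$ with the weight grading, I would reduce to the case of $v\in V_{\mathbb{Z}}\cap V_k$ homogeneous of weight $k\in\mathbb{Z}$. Then $(-x^{-2})^{L(0)}v=(-1)^k x^{-2k}v$ and
\begin{equation*}
e^{xL(1)}(-x^{-2})^{L(0)} v \;=\; (-1)^k x^{-2k}\sum_{j\geq 0} \frac{L(1)^j v}{j!}\,x^j.
\end{equation*}
Because $L(1)$ lowers conformal weight by $1$ and the weights of the vertex operator algebra $V$ are bounded below, this sum terminates at some $j=N$; by the standing hypothesis, every term $\tfrac{L(1)^j v}{j!}$ lies in $V_{\mathbb{Z}}$. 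Substituting back and using the additivity of $Y(\cdot,x^{-1})$ in its first slot gives
\begin{equation*}
Y(e^{xL(1)}(-x^{-2})^{L(0)} v, x^{-1}) \;=\; (-1)^k\sum_{j=0}^{N} x^{-2k+j}\,Y\!\bigl(\tfrac{L(1)^j v}{j!},x^{-1}\bigr),
\end{equation*}
a finite $\mathbb{Z}$-linear combination of powers of $x$ times modes of elements of $V_{\mathbb{Z}}$.

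Applying the right side to $w\in W_{\mathbb{Z}}$, each $x$-coefficient is then an integral combination of elements $u_r w$ with $u\in V_{\mathbb{Z}}$ and $r\in\mathbb{Z}$, and these lie in $W_{\mathbb{Z}}$ because $W_{\mathbb{Z}}$ is a $V_{\mathbb{Z}}$-module; pairing against $w'\in W'_{\mathbb{Z}}$ therefore produces an integer in every coefficient of $x$. Extracting the coefficient of $x^{-n-1}$ yields $\langle v'_n w',w\rangle\in\mathbb{Z}$ for all $w\in W_{\mathbb{Z}}$, so $v'_n w'\in W'_{\mathbb{Z}}$, as required. The only potential difficulty is handling the fractional-looking operators $e^{xL(1)}$ and $(-x^{-2})^{L(0)}$, but both are fully controlled: homogeneity disposes of the second, while the hypothesis that each $L(1)^n/n!$ preserves $V_{\mathbb{Z}}$ together with the lower-bounded weight grading of $V$ turns the first into a finite integral expansion inside $V_{\mathbb{Z}}$, so that no denominators survive.
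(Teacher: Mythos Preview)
Your proof is correct and follows essentially the same approach as the paper's: both use the defining formula for the contragredient action and reduce the question to showing that $e^{xL(1)}(-x^{-2})^{L(0)}v\in V_{\mathbb{Z}}[x,x^{-1}]$, then invoke the $V_{\mathbb{Z}}$-invariance of $W_{\mathbb{Z}}$. You simply spell out more explicitly (via the reduction to homogeneous $v$ and the lower bound on weights) why the expression is a Laurent polynomial with coefficients in $V_{\mathbb{Z}}$, whereas the paper asserts this in a single sentence.
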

\begin{proof}
 By the definition of the module action on $W'$, if $v\in V_\mathbb{Z}$, $w\in
W_\mathbb{Z}$ and $w'\in W'_\mathbb{Z}$,
\begin{equation}
 \left\langle Y(v,x)w',w\right\rangle = \left\langle w',Y(e^{xL(1)}
(-x^{-2})^{L(0)} v,x^{-1})w\right\rangle 
\end{equation}
Since $\frac{L(1)^n}{n!}$ leaves $V_\mathbb{Z}$ invariant, $e^{xL(1)} 
(-x^{-2})^{L(0)} v\in V_\mathbb{Z}\left[  x,x^{-1} \right].$ Since also 
$V_\mathbb{Z}$ leaves $W_\mathbb{Z}$ invariant, by definition $ \left\langle 
Y(v,x)w',w\right\rangle\in\mathbb{Z}[[x,x^{-1}]]$ for any $w$ and so
$Y(v,x)w'\in 
W'_\mathbb{Z}\left[ \left[ x,x^{-1}\right] \right] $ as desired.
\end{proof}
\begin{propo}
 If $V_\mathbb{Z}$ is generated by vectors $v$ such that $L(1)v=0$, then 
$V_\mathbb{Z}$ is invariant under $\frac{L(1)^n}{n!}$ for $n\geq 0$.
\end{propo}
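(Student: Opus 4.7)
The plan is to compute the action of $L(1)^n/n!$ directly on a convenient spanning set of $V_\mathbb{Z}$. Since $V_\mathbb{Z}$ is compatible with the conformal weight grading and $L(1)$ is weight-lowering, each weight-homogeneous component of a generator $v$ with $L(1)v = 0$ again lies in $V_\mathbb{Z}$ and is annihilated by $L(1)$. We may therefore assume the generators are weight-homogeneous vectors $\{u^{(j)}\}$ of integer weights $k_j$ with $L(1)u^{(j)} = 0$, and by Proposition \ref{zgen}, $V_\mathbb{Z}$ is the $\mathbb{Z}$-span of the coefficients of products $Y(u^{(j_1)}, z_1) \cdots Y(u^{(j_r)}, z_r)\mathbf{1}$.

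The key identity is the commutator $[L(1), Y(v, z)]$ for such a weight-$k$ vector $v$ with $L(1)v = 0$. The standard commutator formula with $m=1$ has only three potentially nonzero terms, and using $L(1)v = 0$, $L(0)v = kv$, and the $L(-1)$-derivative property, it reduces to $[L(1), Y(v, z)] = (z^2 \partial_z + 2kz) Y(v, z)$, which has integer coefficients. Combining this with $L(1)\mathbf{1} = 0$ gives
\begin{equation*}
 L(1) \cdot Y(u^{(j_1)}, z_1) \cdots Y(u^{(j_r)}, z_r)\mathbf{1} = \Big(\sum_{i=1}^r D_i\Big)\, Y(u^{(j_1)}, z_1) \cdots Y(u^{(j_r)}, z_r)\mathbf{1},
\end{equation*}
where $D_i = z_i^2 \partial_{z_i} + 2k_{j_i} z_i$ acts only on $z_i$, so the $D_i$ commute.

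By the multinomial theorem, $\frac{L(1)^n}{n!}$ acting on such a product equals $\sum_{n_1 + \cdots + n_r = n} \prod_i \frac{D_i^{n_i}}{n_i!}$ applied to it. A short induction on $j$ shows that, for a weight-$k$ quasi-primary $u$,
\begin{equation*}
 \frac{D^j}{j!}\bigl(u(m)\, z^{-m-1}\bigr) = \binom{2k - m - 2 + j}{j}\, u(m)\, z^{-m-1+j},
\end{equation*}
and the generalized binomial coefficient is an integer whenever $m$ and $k$ are. Extracting the coefficient of $z_1^{-m_1-1} \cdots z_r^{-m_r-1}$ on both sides then writes $\frac{L(1)^n}{n!}\, u^{(j_1)}(m_1) \cdots u^{(j_r)}(m_r) \mathbf{1}$ explicitly as an integer combination of vectors $u^{(j_1)}(m_1') \cdots u^{(j_r)}(m_r') \mathbf{1}$, all in $V_\mathbb{Z}$. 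The main subtlety is bookkeeping the indices through the generating-function expansion; once the commutator formula $[L(1), Y(v,z)] = (z^2 \partial_z + 2kz)Y(v,z)$ is in hand, the rest is a clean calculation in the spirit of Lemma \ref{omegalemma} and Corollary \ref{omegacorol}.
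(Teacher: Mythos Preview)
Your proof is correct, and the mode-by-mode computation of $\frac{D^j}{j!}(u(m)z^{-m-1})$ does yield the integer $\binom{2k-m-2+j}{j}$ as claimed, so the argument goes through.

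Your route is genuinely different from the paper's. The paper invokes the $L(1)$-conjugation formula from \cite{FHL},
\[
e^{yL(1)}Y(v,x)e^{-yL(1)}=Y\left(e^{y(1-xy)L(1)}(1-xy)^{-2L(0)}v,\frac{x}{1-xy}\right),
\]
which for $L(1)v=0$ collapses to $Y\big((1-xy)^{-2k}v,\,x/(1-xy)\big)$; applying $e^{yL(1)}$ to a product $Y(v_1,x_1)\cdots Y(v_r,x_r)\mathbf{1}$ and noting that all coefficients of $(1-xy)^m$ are integers finishes the proof in one stroke. Your argument instead computes the infinitesimal commutator $[L(1),Y(v,z)]=(z^2\partial_z+2kz)Y(v,z)$ and iterates it by hand, using the multinomial expansion and an explicit binomial identity for the divided powers $D^j/j!$. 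This is more elementary and fully self-contained (you do not need to import the conjugation formula), at the cost of a bit more bookkeeping; the paper's approach is shorter but relies on a prepackaged identity. In effect you are rederiving, mode by mode, exactly the integrality that the conjugation formula encodes globally: expanding $(1-xy)^{-2k}$ and substituting $x\mapsto x/(1-xy)$ produces the same binomial coefficients $\binom{2k-m-2+j}{j}$ that you found.
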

\begin{proof}
 We will use the $L(1)$-conjugation formula proved in \cite{FHL}:
\begin{equation}
 e^{yL(1)}Y(v,x)e^{-yL(1)}=Y\left(
e^{y(1-xy)L(1)}(1-xy)^{-2L(0)}v,\frac{x}{1-xy}\right). 
\end{equation}
If $L(1)v=0$, this equation simplifies to 
\begin{equation}
  e^{yL(1)}Y(v,x)e^{-yL(1)}=Y\left( (1-xy)^{-2L(0)}v,\frac{x}{1-xy}\right) .
\end{equation}
Thus if a vector $v\in V_\mathbb{Z}$ is a coefficient of a monomial in
\begin{equation}\label{gen}
 Y(v_1,x_1)\cdots Y(v_k,x_k)\mathbf{1}
\end{equation}
where $L(1)v_i=0$, then $\frac{L(1)^n}{n!}v$ is a coefficient of a monomial in
\begin{eqnarray}
 && e^{yL(1)}Y(v_1,x_1)\ldots Y(v_k,x_k)\mathbf{1}= \nonumber\\
&& Y\left( (1-x_1y)^{-2L(0)}v_1,\frac{x_1}{1-x_1y}\right)\cdots Y\left(
(1-x_ky)^{-2L(0)}v_k,\frac{x_k}{1-x_ky}\right)\mathbf{1} \\\nonumber
\end{eqnarray}
Since the expansion of $(1-xy)^m$ for any integer $m$ has integer 
coefficients, all coefficients of monomials on the right side lie in 
$V_\mathbb{Z}$. Hence $\frac{L(1)^n}{n!} v\in V_\mathbb{Z}$. Thus if 
$V_\mathbb{Z}$ is spanned by coefficients of monomials of the form in 
(\ref{gen}), then $\frac{L(1)^n}{n!}$ leaves $V_\mathbb{Z}$ invariant for any 
$n\geq 0$.
\end{proof}

\begin{rema}
 If an integral form $V_\mathbb{Z}$ of $V$ is generated by lowest weight vectors 
for the Virasoro algebra, then contragredients of $V_\mathbb{Z}$-modules are 
$V_\mathbb{Z}$-modules. In particular, this holds for lattice and affine Lie 
algebra vertex operator algebras by Proposition \ref{omegaprop}.
\end{rema}

 Suppose $V$ is equivalent as $V$-module to its contragredient $V'$. This is 
the case if and only if there is a non-degenerate bilinear form $\left(\cdot 
,\cdot \right)$  on $V$ that is invariant in the sense that
\begin{equation}
 \left( Y(u,x)v,w\right) =\left( v, Y(e^{xL(1)} (-x^{-2})^{L(0)}
u,x^{-1})w\right)  
\end{equation}
for $u,v,w\in V$ (see \cite{FHL}). 
Invariant forms on $V$ are in one-to-one correspondence with linear 
functionals on $V_0/L(1)V_1$ (\cite{Li}). In the case of lattice and affine 
Lie algebra vertex operator algebras, invariant forms are unique up to scale 
since $V_0=\mathbb{C}\mathbf{1}$ and $L(1)V_1=0$. (In the afffine Lie algebra 
case, $V$ has a non-degenerate invariant bilinear form only when 
$V=L_{\widehat{\mathfrak{g}}}(\ell,0)$.) Given a choice of invariant 
bilinear form and an integral form $V_\mathbb{Z}$ of a vertex operator algebra 
$V$, the contragredient module $V'_{\mathbb{Z}}$ 
may be identified with another lattice spanning $V$ that is invariant under 
the action of $V_\mathbb{Z}$. However, $V'_\mathbb{Z}$ need not be an 
integral form of $V$ as a vertex algebra, because it may not be closed under
vertex algebra 
products.
\begin{propo}
 Suppose $V$ is equivalent to $V'$ as $V$-module and $V$ has an integral form
$V_\mathbb{Z}$ preserved by $\frac{L(1)^n}{n!}$ for $n\geq 0$; also assume
$V_0=\mathbb{C}\mathbf{1}.$ Identify $V'_\mathbb{Z}$ with a lattice in $V$ using
a non-degenerate invariant form $(\cdot,\cdot)$ such that
$(\mathbf{1},\mathbf{1})\in\mathbb{Z}\setminus\left\lbrace 0 \right\rbrace $.
Then $V_\mathbb{Z}\subseteq V'_\mathbb{Z}.$
\end{propo}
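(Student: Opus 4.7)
The plan is to prove the equivalent statement that $(v, w) \in \mathbb{Z}$ for all $v, w \in V_{\mathbb{Z}}$, since under the identification $V' \leftrightarrow V$ via the form, $V'_{\mathbb{Z}}$ corresponds to $\{u \in V : (u, w) \in \mathbb{Z}\text{ for all } w \in V_{\mathbb{Z}}\}$. By $\mathbb{Z}$-bilinearity and the compatibility of $V_{\mathbb{Z}}$ with the weight grading, it suffices to treat homogeneous $v, w$, and since an invariant bilinear form pairs $V_n$ nontrivially only with $V_n$, I may further assume $\mathrm{wt}(v) = \mathrm{wt}(w) = k$.

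The key identity would be the invariance relation applied with the vacuum on the right:
\begin{equation*}
(Y(v, x)\mathbf{1}, w) = (\mathbf{1}, Y(e^{xL(1)}(-x^{-2})^{L(0)} v, x^{-1}) w).
\end{equation*}
The left-hand side is $\sum_{n \geq 0} (L(-1)^n v / n!, w) x^n$, whose constant term is $(v, w)$. On the right, for $v$ of weight $k$ I would expand $e^{xL(1)}(-x^{-2})^{L(0)} v = (-1)^k \sum_{n \geq 0} \frac{L(1)^n v}{n!} x^{n - 2k}$ and compare powers of $x$. A weight count shows that $(\mathbf{1}, u_m w)$ is nonzero only when $u_m w \in V_0$; under our weight assumption this pins down $m = 2k - n - 1$ and the power of $x$ to $0$, giving
\begin{equation*}
(v, w) = (-1)^k \sum_{n \geq 0} \Bigl(\mathbf{1},\, \bigl(\tfrac{L(1)^n v}{n!}\bigr)_{2k - n - 1} w \Bigr).
\end{equation*}

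Integrality then follows from four observations. First, the hypothesis that $V_{\mathbb{Z}}$ is invariant under $L(1)^n/n!$ gives $\frac{L(1)^n v}{n!} \in V_{\mathbb{Z}}$ for each $n$. Second, since $V_{\mathbb{Z}}$ is closed under vertex operators, the mode $\bigl(\frac{L(1)^n v}{n!}\bigr)_{2k - n - 1}$ sends $w \in V_{\mathbb{Z}}$ into $V_{\mathbb{Z}}$. Third, by construction this image lies in $V_0 = \mathbb{C}\mathbf{1}$, so Proposition~\ref{spanofvacuum} places it in $\mathbb{Z}\mathbf{1}$. Fourth, the grading restriction $V_m = 0$ for $m$ sufficiently negative forces $L(1)^n v \in V_{k - n}$ to vanish once $n$ is large, so the sum is finite. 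Combined with $(\mathbf{1}, \mathbf{1}) \in \mathbb{Z}$, every summand is an integer, hence $(v, w) \in \mathbb{Z}$.

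The main place to be careful is the formal-variable bookkeeping in the invariance identity, in particular the sign $(-1)^k$ coming from $(-x^{-2})^{L(0)}$ and the weight-shift equation $\mathrm{wt}(w) + k - n - m - 1 = 0$ that isolates the single nonvanishing mode; the hypothesis $V_0 = \mathbb{C}\mathbf{1}$ is what makes the weight filtering collapse cleanly, and the lower-boundedness of the weight grading ensures finiteness, so no convergence or summability issue arises.
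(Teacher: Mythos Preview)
Your proof is correct and follows essentially the same route as the paper: both use the invariance identity with the vacuum $(Y(v,x)\mathbf{1},w)=(\mathbf{1},Y(e^{xL(1)}(-x^{-2})^{L(0)}v,x^{-1})w)$, extract the constant term, and observe that the resulting element of $V_0=\mathbb{C}\mathbf{1}$ lies in $V_\mathbb{Z}\cap V_0=\mathbb{Z}\mathbf{1}$ by Proposition~\ref{spanofvacuum}. The paper's version is slightly more concise, working directly with the formal residue rather than reducing to homogeneous vectors and expanding modes, but the substance is identical.
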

\begin{proof}
 The integral form of $V'$ is identified as: 
\begin{equation}
 V'_\mathbb{Z} =\left\lbrace v'\in V\mid (v',v)\in\mathbb{Z}\;\mathrm{for}\,
v\in V_\mathbb{Z}\right\rbrace .
\end{equation}
Thus we need to show that if $u,v\in V_\mathbb{Z}$, then $(u,v)\in\mathbb{Z}$.
We have:
\begin{eqnarray}
 (u,v) & = & \mathrm{Res}_x\, x^{-1} (Y(u,x)\mathbf{1},v)\nonumber\\
       & = & \mathrm{Res}_x\, x^{-1} (\mathbf{1}, Y(e^{xL(1)} (-x^{-2})^{L(0)}
u,x^{-1})v)\nonumber\\
       & = & (\mathbf{1}, c\mathbf{1})=c\, (\mathbf{1},\mathbf{1})\\\nonumber
\end{eqnarray}
where $c\mathbf{1}=\mathrm{Res}_x\, x^{-1} Y(e^{xL(1)} (-x^{-2})^{L(0)} 
u,x^{-1})v$ (since the residue is indeed in $V_0$). But all coefficients of
$Y(e^{xL(1)} (-x^{-2})^{L(0)} 
u,x^{-1})v$ are in $V_\mathbb{Z}$ because $V_\mathbb{Z}$ is closed under 
vertex operators and invariant under $\frac{L(1)^n}{n!}$ for $n\geq 0$. Thus 
$c\mathbf{1}\in 
V_0\cap V_\mathbb{Z}=\mathbb{Z}\mathbf{1}$, by Proposition \ref{spanofvacuum}
and so $c\, (\mathbf{1},\mathbf{1})$
is an integer.
\end{proof}
\begin{rema}
 The hypotheses of this proposition are satisfied if $V$ is either a lattice or 
irreducible
affine Lie algebra vertex operator algebra and we choose, for instance,
$(\mathbf{1},\mathbf{1})=\pm 1$.
\end{rema}

\noindent {\small \sc Department of Mathematics, Rutgers University,
110 Frelinghuysen Rd., Piscataway, NJ 08854-8019}
\vspace{1em}

\noindent {\em E-mail address}: rhmcrae@math.rutgers.edu

\end{document}